\def \N {\mathbb N}
\def \R {\mathbb R}
\def \C {\mathbb C}
\def \D {\mathbb D}
\def \T {\mathbb T}
\def \P {\mathbb P}
\def \ran {\mathrm{Ran}}
\def \eps {\varepsilon}
\renewcommand{\Re}{\operatorname{Re}}
\newcommand\abs[1]{\left|#1\right|}
\newcommand\dx{\mathrm{d}}
\newcommand{\norm}[1]{\Vert#1\Vert}
\newcommand{\bignorm}[1]{\bigl\Vert#1\bigr\Vert}
\newcommand{\Bignorm}[1]{\Bigl\Vert#1\Bigr\Vert}
\newcommand{\biggnorm}[1]{\biggl\Vert#1\biggl\Vert}
\newcommand{\Biggnorm}[1]{\Biggl\Vert#1\Biggr\Vert}
\newcommand{\rittE}{\textrm{Ritt}_E}
\newcommand{\Rad}[1]{{\rm Rad}(#1)}
\newcommand{\Gauss}{{\rm Gauss}}
\newcommand{\rara}[1]{{\rm Rad}({\rm Rad}(#1))}
\newcommand{\HI}{H^\infty}
\theoremstyle{plain}
\newtheorem{theorem}{Theorem}[section]
\newtheorem{proposition}[theorem]{Proposition}
\newtheorem{corollary}[theorem]{Corollary}
\newtheorem{definition}[theorem]{Definition}
\newtheorem{lemma}[theorem]{Lemma}
\theoremstyle{definition}
\newtheorem{remark}[theorem]{Remark}
\title{SQUARE FUNCTIONS ASSOCIATED WITH RITT$_E$ OPERATORS}
\author{Oualid Bouabdillah}
\email{oualid.bouabdillah@univ-fcomte.fr,}
\address{Laboratoire de Math\'ematiques de Besan\c con, UMR 6623, 
CNRS, Universit\'e Bourgogne Franche-Comt\'e,
25030 Besan\c{c}on Cedex, FRANCE}
\date{}
\begin{document}

\maketitle

\begin{abstract}
    For a subset $E = \{\xi_1, ..., \xi_N\}$ of the unit circle $\mathbb{T}$, the notion of Ritt$_E$ operators on a Banach space and their functional calculus on generalized Stolz domains was developed and studied in \cite{BLM}. 

    In this paper, we define a quadratic functional calculus for a Ritt$_E$ operator on $E_r$, by a decomposition of type Franks-McIntosh. We show that with some hypothesis on the cotype of $X$, this notion is equivalent to the existence of a bounded functional calculus on $E_r$.

    We define for a Ritt$_E$ operator on a Banach space $X$ and for any positive real number $\alpha$ and for any $x \in X$
    $$
        \Vert{x}\Vert_{T,\alpha} = \lim\limits_{n\rightarrow \infty}\Bigl\Vert{\sum\limits_{k=1}^n k^{\alpha - 1/2} \varepsilon_k \otimes T^{k-1}\prod\limits_{j=1}^N(I-\overline{\xi_j}T)^\alpha(x)}\Bigr\Vert_{{\rm Rad}(X)}
    $$
    We show that, under the condition of finite cotype of $X$, a Ritt$_E$ operator admits a quadratic functional calculus if and only if the estimates $\Vert{x}\Vert_{T,\alpha} \lesssim \Vert{x}\Vert$ hold for both $T$ and $T^*$.
    We finally prove the equivalence between these square functions.

\end{abstract}
\vskip 0.8cm
\noindent
{\it 2000 Mathematics Subject Classification:} Primary 47A60, secondary	47B12, 47B01.

\smallskip
\noindent
{\it Key words:} Functional calculus, Sectorial operators, Ritt operators, $R$-boundedness, Square functions estimates.

\section{Introduction}

    In a previous article \cite{BLM}, the class of Ritt operators was extended to Ritt$_E$ operators. This is the class of operators $T$ 
    on some Banach space for which the spectrum $\sigma(T)$
    is included in the closed unit disc, the intersection of
    $\sigma(T)$ with the unit circle is included in a finite 
    set $E = \{\xi_j\}_{1\leq j \leq N} \subset \T$, with $N \geq 1$, and the set
    $$
        \Bigl\{\prod\limits_{j=1}^N (1-\overline{\xi_j}z) R(z,T) : 1 <\abs{z}<C\Bigr\}
    $$
    is bounded, for any $C > 1$.

    On Banach spaces, Ritt operators are used 
    in the context of maximal regularity 
    for Cauchy problems (see \cite{Bl,KP}), 
    as well as for resolution of linear equations by iterative methods.
    They are closely related 
    to sectorial operators, which appear 
    naturally in semi-group theory (see \cite{Gol,Paz}).
    Several articles deal with their functional
    calculus (see for instance \cite{AFL,ALM,Bl,GT,KP,LM1,LMX,Ly,NZ,N,V}), either on Hilbert spaces, on $L^p$-spaces 
    or on more general Banach spaces.

    We recall that the spectrum of a Ritt operator $T$ intersects the unit circle $\T$ at most  at $\{1\}$. Further such an operator has its spectrum included in a Stolz domain, which can be defined as the convex hull of $\{1\} \cup r \D$, for a certain $r \in (0,1)$.
    Ritt$_E$ operators satisfy a similar property, with $\{1\}$ replaced by $E$.
    More generally, several important properties of Ritt operators
    extend to Ritt$_E$ operators, see \cite{BLM} for examples. In particular, we showed in the latter paper the existence of a bounded $\HI$-functional calculus for polynomially bounded Ritt$_E$
    operators satisfying an appropriate $\R$-boundedness property.

    For a Ritt operator $T$, there is a close link between 
    $\HI$ functional calculus of $T$ and square functions 
    estimates associated with $T$. Under certain 
    conditions, the existence of a bounded $\HI$ calculus for $T$ 
    is equivalent to square functions estimates for $T$ and $T^*$, 
    see \cite{LM1}. 
    Note that the definition of square functions associated with 
    a Ritt operator need Rademacher means (see e.g \cite{HVVW}).

    The purpose of this article is to introduce square functions adapted to Ritt$_E$ operators, and to show characterizations
    of the existence of a bounded $\HI$ bounded functional calculus
    with square function estimates in this context. 
    This requires the introduction of a stronger notion of bounded 
    functional calculus of an operator (named quadratic functional calculus) which, under the hypothesis of 
    finite cotype, coincides with the usual one. 
    This result relies on an explicit Franks-McIntosh decomposition adapted to Ritt$_E$ operators, which is detailed in section 4.
    
    
    Let $E \subset \T$ be a finite family and $T$ be a Ritt$_E$ operator (see \cite[Definition 2.1]{BLM}) on $X$. Given $\alpha>0$, we define the square functions associated with $T$ as Rademacher averages of variables $(\eps_k)_{k\geq 1}$ by
    $$
        \norm{x}_{T,\alpha} = \lim\limits_{n\rightarrow \infty} \,\Bignorm{\sum\limits_{k = 1}^n k^{\alpha - 1/2} \eps_k \otimes T^{k-1} \prod\limits_{j=1}^N (1 - \overline{\xi_j}T)^\alpha x}.
    $$    
    In section 6, we show that the existence of a $\HI$ functional calculus of a Ritt$_E$ operator implies general square functions estimates and conversely, using the notion of quadratic functional calculus described in section 5. In the case where $X$ has property $(\Delta)$, we prove a stronger result.

    Finally, in section 7, we show the equivalence of all the square functions $\norm{.}_{T,\alpha}$ and $\norm{.}_{T,\beta}$, 
    for any $\alpha, \beta >0$, under hypothesis of finite cotype and reflexivity. This implies that under these conditions, more general square function estimates are equivalent to the existence of a $\HI$ bounded functional calculus.

    All the results of these two sections use approximation principles presented in section 3.

\newpage

\section{Preliminaries}
Throughout we let $X$ be a Banach space and we let $B(X)$ denote the Banach algebra of all bounded operators on $X$.
We define square functions 
associated with a Ritt$_E$ operator. These definitions are natural
extensions of square functions associated with Ritt operators,
for which we refer to \cite{ALM, LM1}. 
Let $(\eps_k)_{k\geq 1}$ be a sequence of independant Rademacher random variables on some probability space $(\Omega,\P)$ and let $\Rad{X}$ be the subspace of the Bochner space $L^2(\Omega,X)$ spanned by the set $\{\eps_k\otimes x : k \geq 1, x \in X\}$. For any finite families $x_1, ... , x_n$ of elements of $X$, we have
\begin{equation}
    \Bignorm{\sum\limits_{k=1}^n \eps_k\otimes x_k}_{\Rad{X}} = \left(\int_{\Omega}\bignorm{\sum\limits_{k=1}^n\eps_k(u)x_k}_X^2\dx\P(u)\right)^{1/2}
\end{equation}

This norm provides a way to deal with $\mathcal{R}$-boundedness. We give the following definition of this notion, and define $\mathcal{R}$-Ritt$_E$ operators.

\begin{definition}
    Let $X$ be a Banach space and consider a subset $F \subset B(X)$ of operators.
    We say that $F$ is $\mathcal{R}$-bounded (respectively $\gamma$-bounded) if there exists a constant $C>0$ such that for all finite sequences $(T_k)_{k\geq 1}$ in $F$ and $(x_k)_{k\geq 1}$ in $X$, 
        \begin{equation}\label{RBDDNESS}
            \Bignorm{\sum\limits_{k=1}^n \varepsilon_k \otimes T_k x_k}_{\Rad{X}} \leq C \Bignorm{\sum\limits_{k=1}^n \eps_k \otimes x_k}_{\Rad{X}}
        \end{equation}
\end{definition}

The following fact 

\begin{proposition}\label{R-RITTE}
    An operator $T : X \to X$ on a Banach space $X$ is $\mathcal{R}$-Ritt$_E$ if and only if there exists 
    $r\in (0,1)$ such that
        $$
            \sigma(T) \subset \overline{E_r}
        $$
    and for all $s\in(r,1)$, the set
        $$
            \biggl\{R(z,T) \prod\limits_{j=1}^N(\xi_j-z)\, :\, z\in D(0,2)\setminus \overline{E_s}\biggr\}
        $$
    is $\mathcal{R}$-bounded.
\end{proposition}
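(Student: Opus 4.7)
The plan is to prove this as the $\mathcal{R}$-bounded analogue of the Ritt$_E$ characterization established in \cite[Lemma~2.8 and Remark~3.8]{BLM}. My strategy will be to run the same argument as in the scalar case, while replacing uniform operator-norm bounds with $\mathcal{R}$-bounds, relying on the stability of $\mathcal{R}$-boundedness under multiplication by uniformly bounded scalars and under absolutely convergent series.

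The ``if'' direction is essentially immediate. Given the stated $r$ and any $s\in(r,1)$, the annulus $\{z:1<|z|\le 2\}$ lies in $D(0,2)\setminus\overline{E_s}$ (since $E_s\subset\D$), and the scalar functions $\prod_j(1-\overline{\xi_j}z)$ and $\prod_j(\xi_j-z)$ differ by a unimodular constant; restricting the hypothesized family to this annulus then yields the defining $\mathcal{R}$-Ritt$_E$ property. For the converse, since $\mathcal{R}$-boundedness implies norm-boundedness, the bounded version from \cite{BLM} applies and produces an $E$-large-enough $r\in(0,1)$ with $\sigma(T)\subset\overline{E_r}$.

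The heart of the argument is a resolvent Taylor expansion. Fix $s\in(r,1)$. For each $w\in D(0,2)\setminus\overline{E_s}$ with $|w|\le 1$ (the case $|w|>1$ being already covered by the hypothesis), I would select, using the Stolz-type geometry of $E_s$, a point $z=z(w)$ in the annulus $\{1<|z|\le 2\}$ such that
\begin{enumerate}
\item[(i)] $|z-w|\le\tfrac12\,\mathrm{dist}(w,\sigma(T))$, so that $R(w,T)=\sum_{k\ge 0}(z-w)^k R(z,T)^{k+1}$ converges geometrically;
\item[(ii)] the ratios $|\xi_j-w|/|\xi_j-z|$ remain uniformly bounded in $w$ and in $j$.
\end{enumerate}
A convenient concrete choice near a vertex $\xi_j$ is $z(w)=\xi_j+e^{i\theta_j}(w-\xi_j)$ for a suitable bounded angle $\theta_j$, which gives $|\xi_j-z|=|\xi_j-w|$ automatically. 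Multiplying the Taylor expansion by $\prod_j(\xi_j-w)$ and regrouping then yields
$$
R(w,T)\prod_{j=1}^N(\xi_j-w)=\sum_{k\ge 0}(z-w)^k\Bigl(\prod_{j=1}^N\frac{\xi_j-w}{\xi_j-z}\Bigr)\Bigl(R(z,T)\prod_{j=1}^N(\xi_j-z)\Bigr)R(z,T)^k.
$$
The middle bracketed family is $\mathcal{R}$-bounded for $z$ in the annulus by hypothesis, the scalar prefactors are uniformly bounded by (i)--(ii), and $\|R(z,T)^k\|\,|z-w|^k$ decays geometrically by (i); the closure properties of $\mathcal{R}$-boundedness then combine these pieces into the desired $\mathcal{R}$-bound on the whole family.

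The hard part will be verifying (i) uniformly in $w$ in a neighbourhood of each vertex $\xi_j$: one must show that some bounded-angle rotation produces a point $z$ outside $\overline{\D}$ while keeping $|z-w|$ comparable to and at most half of $\mathrm{dist}(w,\overline{E_r})$. This is the geometric step appearing in \cite[Lemma~2.8]{BLM}, and it is precisely where the $E$-large-enough hypothesis on $r$ enters: it ensures that both $|z-w|$ and $\mathrm{dist}(w,\overline{E_r})$ are comparable to $|w-\xi_j|$ with constants depending only on $r$, $s$, and the angular separation of the $\xi_j$'s.
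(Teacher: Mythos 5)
The paper itself does not prove this proposition: it is imported as a known fact from \cite[Lemma 2.8 and Remark 3.8]{BLM}, and your resolvent-Taylor-expansion strategy matches the mechanism of that lemma. The easy direction and the geometric construction of $z(w)$ are fine.

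However, there is a genuine gap in the $\mathcal{R}$-boundedness combination step. Once you expand
\[
R(w,T)\prod_{j}(\xi_j-w)=\sum_{k\ge 0}(z-w)^k\Bigl(\prod_{j}\frac{\xi_j-w}{\xi_j-z}\Bigr)\Bigl(R(z,T)\prod_{j}(\xi_j-z)\Bigr)R(z,T)^k,
\]
you argue that the scalar factors are uniformly bounded, that the bracketed family is $\mathcal{R}$-bounded, and that $\lVert R(z,T)^k\rVert\,|z-w|^k$ decays geometrically; but that last control is an \emph{operator-norm} estimate, not an $\mathcal{R}$-bound, and $\mathcal{R}$-boundedness of the family $\{R(z,T)^k : 1<|z|\le 2,\ k\ge 0\}$ fails outright (the norms $\lVert R(z,T)^k\rVert$ blow up near each $\xi_j$, and even uniformly norm-bounded families need not be $\mathcal{R}$-bounded). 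So the closure properties of $\mathcal{R}$-boundedness (absolutely convex hulls, products) do not apply to the pieces as you have split them. The repair is to fold the iterated resolvent into powers of the $\mathcal{R}$-bounded family: write $R(z,T)^k=\bigl(\prod_j(\xi_j-z)\bigr)^{-k}\bigl(R(z,T)\prod_j(\xi_j-z)\bigr)^{k}$, so the operator part of the $k$-th term becomes $\bigl(R(z,T)\prod_j(\xi_j-z)\bigr)^{k+1}$, whose $\mathcal{R}$-bound is at most $M^{k+1}$ where $M$ is the $\mathcal{R}$-bound of the hypothesis family; the scalar factor is then $\bigl(\tfrac{z-w}{\prod_j(\xi_j-z)}\bigr)^{k}\prod_j\tfrac{\xi_j-w}{\xi_j-z}$, and you must therefore choose $z(w)$ so that $|z-w|/\prod_j|\xi_j-z|$ is small enough to beat $M^{-1}$, not merely to be less than $1$. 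With that absorption and the strengthened choice of $z(w)$, the term-by-term $\mathcal{R}$-bounds $\varepsilon^k M^{k+1}$ are summable and the standard fact that a family of the form $\{\sum_k A_k : A_k\in F_k\}$ with $\sum_k \mathcal{R}(F_k)<\infty$ is $\mathcal{R}$-bounded closes the argument.
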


We may now introduce square functions associated with Ritt$_E$ operators, and explicit them in particular cases (such as in Hilbert spaces of Banach lattices of finite cotype).

\begin{definition}\label{SF}
Let $T : X \rightarrow X$ be a Ritt$_E$ operator and let $\alpha > 0$.
The square function $\norm{\,\cdotp}_{T,\alpha}$ associated with $T$ is defined, for any $x \in X$, by
$$
\norm{x}_{T,\alpha} = \lim\limits_{n\rightarrow \infty}\Bignorm{\sum\limits_{k=1}^n k^{\alpha - 1/2} \varepsilon_k \otimes T^{k-1}\prod\limits_{j=1}^N(I-\overline{\xi_j}T)^\alpha(x)}_{\Rad{X}}.
$$
\end{definition}

\begin{remark}

(1) If $T$ is a Ritt$_E$ operator, then $I-\overline{\xi_j}T$ is a sectorial operator for each $j=1,\ldots,N$, by \cite[Lemma 2.4]{BLM}. Thus, $(I-\overline{\xi_j}T)^\alpha$ is a well-defined bounded operator on $X$, using the classical definition of fractional powers of sectorial operators, which can be found e.g in \cite[Chapter 3]{Haase}.

\smallskip
(2) The square functions are well-defined, because the sequence of 
norms
$$
\Bignorm{\sum\limits_{k=1}^n k^{\alpha - 1/2} \varepsilon_k \otimes 
T^{k-1}\prod\limits_{j=1}^N(I-\overline{\xi_j}T)^\alpha(x)}_{\Rad{X}},\quad n\geq 1,
$$
is nondecreasing. However, $\norm{x}_{T,\alpha}$ may be infinite.

\smallskip
(3) 
If $X = H$ is a Hilbert space, the square functions are equal to 
$$
\norm{x}_{T,\alpha} = \left(\sum\limits_{k=1}^\infty k^{2\alpha-1}
\Bignorm{T^{k-1}\prod\limits_{j=1}^N(I-\overline{\xi_j}T)^\alpha (x)}^2\right)^{1/2}.
$$

\smallskip
(4) 
If $X$ is a Banach lattice with finite cotype, then we have
$$
\norm{x}_{T,\alpha} \approx \biggnorm{
\Bigl(\sum\limits_{k=1}^\infty k^{2\alpha-1}
\Bigl\vert T^{k-1}\prod\limits_{j=1}^N
(I-\overline{\xi_j}T)^\alpha (x)\Bigr\vert^2\Bigr)^{1/2}}_X.
$$
\end{remark}

The following remarkable result of Kaiser-Weis (see \cite[Corollary 3.4]{KCWL}) will be used further to link quadratic and bounded functional calculus.

\begin{lemma}\label{Kaiser}
Let $X$ be a Banach space with finite cotype. Then there exists a constant $C>0$ such that
$$
\Bignorm{\sum\limits_{k,l\geq1}\alpha_{kl}\eps_{k}\otimes\eps_l\otimes x_k}_{\rara{X}} \leq C \sup\limits_{k}\left(\sum\limits_{l} \abs{\alpha_{kl}}^2\right)^{1/2} \Bignorm{\sum\limits_{k} \eps_k \otimes x_k}_{\Rad{X}}
$$
for any finite family $(x_k)_k$ of $X$ and any finite
family $(\alpha_{kl})_{k,l \geq 1}$ of complex numbers.
\end{lemma}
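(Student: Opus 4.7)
The plan is to reduce the estimate to its Gaussian analogue and then extract the factor $\sup_k\beta_k$ (with $\beta_k:=(\sum_l|\alpha_{kl}|^2)^{1/2}$) via a conditional expectation argument combined with the operator-ideal properties of $\gamma$-radonifying norms. Under the finite cotype hypothesis, the Maurey-Pisier theorem guarantees that the Rademacher norms $\|\cdot\|_{\Rad{X}}$ and $\|\cdot\|_{\rara{X}}$ are equivalent to their Gaussian counterparts; it therefore suffices to prove the analogous inequality with independent standard Gaussian sequences $(g_k)_{k\ge 1}$ and $(g'_l)_{l\ge 1}$ replacing $(\eps_k)$ and $(\eps_l)$.

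In the Gaussian setting, set $c_k(v):=\sum_l\alpha_{kl}\,g'_l(v)$; the family $(c_k)_k$ is a centered Gaussian vector with covariance $\alpha\alpha^\ast$, independent of $(g_k)_k$, and $\mathbb{E}|c_k|^2=\beta_k^2\le(\sup_k\beta_k)^2$. By Fubini, the square of the left-hand side equals $\mathbb{E}_v\,\mathbb{E}_u\bigl\|\sum_k g_k(u)\,c_k(v)\,x_k\bigr\|_X^2$, and for each fixed $v$ the inner expectation is the squared $\gamma$-radonifying norm of the operator $T\circ D_v\colon\ell^2\to X$, where $T\colon e_k\mapsto x_k$ and $D_v$ is the diagonal operator with entries $c_k(v)$. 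Writing $D_v=\sum_l g'_l(v)\,D^{(l)}$ with $D^{(l)}:=\mathrm{diag}(\alpha_{kl})_k$, the averaging over $v$ reassembles the matrix $\alpha$, viewed as a bounded operator $\ell^2\to\ell^\infty$ of norm exactly $\sup_k\beta_k$. Combining this with the $\gamma$-ideal estimate $\|T\circ A\|_\gamma\le\|A\|\cdot\|T\|_\gamma$ and the equivalence $\|T\|_\gamma\approx\|\sum_k\eps_k\otimes x_k\|_{\Rad{X}}$ (valid under finite cotype) yields the desired bound.

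The principal obstacle is avoiding the spurious $\sqrt{\log n}$ loss that a direct pointwise contraction bound $\|D_v\|_{\mathrm{op}}=\sup_k|c_k(v)|$ would produce when averaged over $v$, since correlated centered Gaussians with variance bounded by $M^2$ satisfy $\mathbb{E}[\sup_k|c_k|^2]\sim M^2\log n$ in general. Finite cotype is exactly what lets us bypass this naive supremum: it transfers the averaging from pointwise operator norms to the $\gamma$-ideal structure, so the final constant depends only on $\sup_k\beta_k$ and the cotype constant of $X$.
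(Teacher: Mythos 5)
The paper does not prove this statement at all: it is quoted from Kaiser and Weis, \cite[Corollary~3.4]{KCWL}, so there is no ``paper's own proof'' to compare against. This means your blind attempt must be judged purely on its internal correctness, and it has a genuine gap at the crucial step. After the (correct) Fubini reduction to $\mathbb{E}_v\,\mathbb{E}_u\bigl\Vert\sum_k g_k(u)c_k(v)x_k\bigr\Vert^2$, you assert that ``averaging over $v$ reassembles the matrix $\alpha$, viewed as a bounded operator $\ell^2\to\ell^\infty$ of norm $\sup_k\beta_k$'' and then invoke $\Vert T\circ A\Vert_\gamma\le\Vert A\Vert\cdot\Vert T\Vert_\gamma$. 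But the $\gamma$-ideal estimate is a statement about composition with bounded operators \emph{between Hilbert spaces}: the $\Vert A\Vert$ appearing there is the $\ell^2\to\ell^2$ operator norm (the spectral norm of $[\alpha_{kl}]$), which can be much larger than the maximal row norm $\sup_k\beta_k=\Vert[\alpha_{kl}]\Vert_{\ell^2\to\ell^\infty}$. There is no version of the ideal inequality in which the $\ell^2\to\ell^\infty$ norm can be substituted, and no ``Fubini reassembling'' that produces it; so this step does not prove what you claim.

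What is really happening in the quantity $\mathbb{E}_v\mathbb{E}_u\Vert\sum_k g_k(u)c_k(v)x_k\Vert^2$ is that, since $c_k(v)=\sum_l\alpha_{kl}g'_l(v)$ is itself Gaussian, this is a \emph{decoupled second-order Gaussian chaos} average of the vectors $\alpha_{kl}x_k$. The step your argument is missing, and which is where finite cotype actually does its work, is the comparison of decoupled second chaos with a first-order Gaussian average over the pair index: for $X$ of finite cotype, $\mathbb{E}_{u,v}\Vert\sum_{k,l}g_k(u)g'_l(v)y_{kl}\Vert^2\lesssim\mathbb{E}_w\Vert\sum_{k,l}g_{kl}(w)y_{kl}\Vert^2$ with i.i.d.\ $g_{kl}$. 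Once this is done, the ideal property applies correctly to the Hilbert-space operator $P\colon\ell^2(\N^2)\to\ell^2(\N)$, $e_{kl}\mapsto\alpha_{kl}e_k$, whose (genuine Hilbert-space) operator norm is $\Vert P\Vert=\sup_k\beta_k$ because $PP^*=\mathrm{diag}(\beta_k^2)_k$. So the constant $\sup_k\beta_k$ does come from an ideal estimate, but as the $\ell^2(\N^2)\to\ell^2(\N)$ norm of $P$, not as an $\ell^2\to\ell^\infty$ norm, and the chaos comparison that makes this legal is absent from your proposal. You correctly identified the $\sqrt{\log n}$ obstruction, but the route you suggest for circumventing it is not substantiated and cannot be made to work as phrased.
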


We end these preliminaries with the stability of $\mathcal{R}$-boundedness by passing to the adjoint, when $X$ is $K$-convex.

\begin{theorem}\label{KcvRbdd}
Let $X$ be a $K$-convex Banach space and 
let $F \subset B(X)$ 
be an $\mathcal{R}$-bounded set of operators on $X$.     
Then the set $F^* = \{T^*, T \in F\}$ is 
$\mathcal{R}$-bounded as well.
\end{theorem}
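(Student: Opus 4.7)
The plan is to exploit the fact that $K$-convexity of $X$ ensures a controlled duality between $\Rad{X}$ and $\Rad{X^*}$, and then to transfer the $\mathcal{R}$-bound of $F$ to $F^*$ by testing.

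Concretely, I would first recall that $X$ being $K$-convex means the Rademacher projection $P$ on $L^2(\Omega; X)$ is bounded. The standard consequence is that the natural pairing
$$
\Bigl\langle \sum_k \eps_k\otimes x_k,\, \sum_k \eps_k\otimes y_k\Bigr\rangle = \sum_k \langle y_k,x_k\rangle,\qquad x_k\in X,\ y_k\in X^*,
$$
identifies $\Rad{X^*}$ with a subspace of $\Rad{X}^*$ that is isomorphic to the full dual, with an isomorphism constant bounded by the $K$-convexity constant $K(X)$. In other words, there is a constant $K$ such that for every finite family $(y_k)$ in $X^*$,
$$
\Bignorm{\sum_k\eps_k\otimes y_k}_{\Rad{X^*}} \leq K\,\sup\Bigl\{\Bigl|\sum_k \langle y_k, x_k\rangle\Bigr|\, :\, \Bignorm{\sum_k \eps_k\otimes x_k}_{\Rad{X}}\leq 1\Bigr\}.
$$

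Now take finite families $(T_k)_{k=1}^n$ in $F$ and $(y_k)_{k=1}^n$ in $X^*$. For any $(x_k)_{k=1}^n$ in $X$ with $\Vert\sum\eps_k\otimes x_k\Vert_{\Rad{X}}\leq 1$, I would write
$$
\Bigl|\sum_k \langle T_k^* y_k, x_k\rangle\Bigr| = \Bigl|\sum_k \langle y_k, T_k x_k\rangle\Bigr| \leq \Bignorm{\sum_k \eps_k\otimes y_k}_{\Rad{X^*}}\,\Bignorm{\sum_k \eps_k\otimes T_k x_k}_{\Rad{X}},
$$
where the last inequality is just Cauchy--Schwarz in $L^2(\Omega;X)$ combined with the trivial (one-sided, non-$K$-convex) pairing bound. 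The $\mathcal{R}$-boundedness of $F$ with constant $C=\mathcal{R}(F)$ then gives
$$
\Bignorm{\sum_k \eps_k\otimes T_k x_k}_{\Rad{X}}\leq C\Bignorm{\sum_k \eps_k\otimes x_k}_{\Rad{X}}\leq C.
$$

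Finally, applying the $K$-convex duality inequality to the family $(T_k^*y_k)_k\subset X^*$ and taking the supremum over admissible $(x_k)$, I obtain
$$
\Bignorm{\sum_k \eps_k\otimes T_k^*y_k}_{\Rad{X^*}}\leq KC\Bignorm{\sum_k \eps_k\otimes y_k}_{\Rad{X^*}},
$$
which is exactly the $\mathcal{R}$-boundedness of $F^*$ with constant $\mathcal{R}(F^*)\leq K(X)\cdot\mathcal{R}(F)$. The only delicate point is the identification $\Rad{X}^*\simeq\Rad{X^*}$ under $K$-convexity; this is classical (it is essentially the definition via the Rademacher projection), so I would simply invoke it and cite a standard reference such as \cite{HVVW}. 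Everything else in the argument is a short duality computation.
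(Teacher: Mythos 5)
Your proof is correct and follows essentially the same route as the paper's: identify $\Rad{X^*}$ with $\Rad{X}^*$ via $K$-convexity, then estimate $\bigl\|\sum_k\eps_k\otimes T_k^*y_k\bigr\|$ by testing against $(x_k)$, transfer the adjoint, apply the pairing (Cauchy--Schwarz) bound, and finally invoke the $\mathcal{R}$-bound of $F$. If anything your write-up is a bit more careful than the paper's, which contains a small typo ($T^*(x_k)$ for $T_k(x_k)$) and leaves the supremum over $(x_k)$ implicit in its third displayed line.
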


\begin{proof}
            Let $(y_k)_k$ be a finite family of $X^*$. 
            By $K$-convexity, the spaces $\Rad{X^*}$ and $\Rad{X}^*$ are isomorphic.
            Thus we have the following estimates.
            \begin{align*}
                \Bignorm{\sum\limits_k \eps_k \otimes T_k^*(y_k)}_{\Rad{X}^*} &\lesssim \sup\{\abs{\sum\limits_k \langle{T_k^*(y_k),x_k\rangle}} : (x_k)_k \subset X, \norm{\sum\limits_{k}\eps_k \otimes x_k}_{\Rad{X}} \leq 1\}\\
                &= \sup\{\abs{\sum\limits_k \langle{y_k,T^*(x_k)\rangle}} : (x_k)_k \subset X, \norm{\sum\limits_{k}\eps_k \otimes x_k }_{\Rad{X}}\leq 1\}\\
                &\leq \Bignorm{\sum\limits_k \eps_k \otimes y_k}_{\Rad{X^*}} \times \Bignorm{\sum\limits_k \eps_k \otimes T_k(x_k)}_{\Rad{X}}\\
                &\leq \mathcal{R}(F) \Bignorm{\sum\limits_k \eps_k \otimes y_k}_{\Rad{X^*}}.
            \end{align*}
        \end{proof}

\newpage
\section{$\mathcal{R}$-Ritt$_E$ operators and approximation properties}

Many properties that we study for Ritt$_E$ operators $T : X \rightarrow X$ need to be proved first for the operators $\rho T$, with $\rho\in(0,1)$. These operators are easier to handle with, since their
spectrum is included in $\mathbb D$. However to pass from results on $\rho T$ to results on $T$
requires uniform estimates and approximation properties. This section is devoted to such issues.

\begin{lemma}\label{indRittRho}
Let $T$ be a Ritt$_E$ operator (respectively an $\mathcal{R}$-Ritt$_E$ operator). Let $r\in(0,1)$ such that 
\begin{itemize}
\item $\displaystyle{\sigma(T) \subset \overline{E_r}}$;
\item For all $s\in(r,1)$, there exists a constant $c>0$
such that 
$$
\|R(z,T)\| \leq\, \frac{c}{\prod\limits_{j=1}^N \abs{\xi_j - z}},
\qquad
z\in D(0,2)\setminus \overline{E_s}.
$$
\end{itemize}
Then for any $s\in(r,1)$, the set
\begin{equation}\label{Set}
\Bigl\{R(z,\rho T){\prod\limits_{j=1}^N({1-\overline{\xi_j}z}})\, 
:\,\rho \in (0,1],\ z \notin\overline{E_s},
\ \abs{z} \leq 2\Bigr\}
\end{equation}
is bounded (respectively $\mathcal{R}$-bounded).
\end{lemma}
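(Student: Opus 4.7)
The plan is to reduce the problem to $T$ itself via the scaling identity $R(z,\rho T) = \rho^{-1} R(z/\rho, T)$, and then combine the hypothesis with an elementary geometric observation about the boundary of $\overline{E_s}$ near each vertex $\xi_j$. First I would observe that since $\overline{E_s}$ is convex and contains the origin, we have $\rho \overline{E_s} \subset \overline{E_s}$ for every $\rho \in (0,1]$. Consequently, if $z \notin \overline{E_s}$, then $w := z/\rho$ also lies outside $\overline{E_s}$, hence in the resolvent set of $T$, and the scaling identity above applies.

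I would then split according to $|w|$. In the main case $|w|\le 2$, applying the hypothesis to $w$ and using the identities $|\xi_j - z/\rho| = |\rho\xi_j - z|/\rho$ and $|1-\overline{\xi_j}z| = |\xi_j - z|$ yields
\[
\Bignorm{R(z,\rho T)\prod_{j=1}^N(1-\overline{\xi_j}z)} \,\le\, c\,\rho^{N-1}\prod_{j=1}^N\frac{|\xi_j - z|}{|\rho\xi_j - z|}.
\]
The key geometric estimate is that each ratio $|\xi_j - z|/|\rho\xi_j - z|$ is bounded uniformly in $z \notin \overline{E_s}$ and $\rho \in (0,1]$. This follows from the fact that the outward tangent cone to $\overline{E_s}$ at $\xi_j$ is bounded away from the direction $-\xi_j$ by some angle $\alpha_s > 0$ (depending on $s$), because $\overline{E_s}$ contains the entire segment $[0,\xi_j]$ as well as the disc $D(0,s)$. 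Writing $z - \xi_j = t e^{i\theta}$ and expanding $|\rho\xi_j - z|^2 = |(1-\rho)\xi_j + te^{i\theta}|^2$ in terms of the relative angle, one obtains $|\rho\xi_j - z| \ge (\sin\alpha_s)|\xi_j - z|$, which gives the required uniform bound on the ratio.

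In the remaining case $|w| > 2$, we have $|z| > 2\rho$; two sub-cases arise. If $\rho \le |z|/(2(\|T\|+1))$, then the Neumann expansion $R(z,\rho T) = z^{-1}\sum_{k\ge 0}(\rho T/z)^k$ converges and gives $\|R(z,\rho T)\| \le 2/|z| \le 2/s$, which combined with the bound $\prod_{j}|1-\overline{\xi_j}z| \le 3^N$ settles this range. Otherwise $\rho$ is bounded below by $s/(2(\|T\|+1))$, so $|w| = |z|/\rho$ lies in a fixed compact range, and the standard Ritt$_E$ resolvent estimate for $T$ on $\{|w| \le 2(\|T\|+1)\}\setminus \overline{E_s}$ (available from \cite{BLM}) reduces this sub-case to the same ratio estimate as in Case A. Finally, the $\mathcal{R}$-bounded version is obtained by the same decomposition: the hypothesis upgrades the pointwise resolvent bound to $\mathcal{R}$-boundedness of $\{\prod_j(\xi_j - w)R(w,T)\}$, and multiplication by the uniformly bounded scalar factor $\rho^{N-1}\prod_j(\xi_j - z)/(\rho\xi_j - z)$ preserves $\mathcal{R}$-boundedness (via Kahane's contraction principle); the Neumann branch remains $\mathcal{R}$-bounded because each term is a scalar multiple of a fixed power of $T$. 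The principal obstacle is the geometric ratio estimate in the main case — everything else is algebraic manipulation or standard Neumann-type reasoning.
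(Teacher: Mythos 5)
Your reduction to $T$ via $R(z,\rho T)=\rho^{-1}R(z/\rho,T)$ is the same starting point as the paper's, but the case split is genuinely different: you sort by the size of $w=z/\rho$, whereas the paper fixes $a=s/2$ and splits by $\rho\in[a,1]$ versus $\rho\in(0,a]$. Your ``small $\rho$'' branch is handled by a Neumann-series expansion that sums to an $\mathcal R$-bounded family (each $F_k=\{\lambda T^k:|\lambda|\le M_k\}$ has $\mathcal R$-bound $\lesssim M_k\|T\|^k$, and $\sum_k M_k\|T\|^k<\infty$ because $|\rho/z|\le 1/(2(\|T\|+1))$, so Kahane's contraction principle and summability close the argument); the paper instead cites the $\mathcal R$-boundedness of $\{(1-\lambda T)^{-1}:\lambda\in\mathcal K\}$ on the compact $\mathcal K=\overline D(0,1/2)\subset\mathbb D$, which is Weis's theorem and avoids the series. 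Both are legitimate. The main case is identical in spirit: pull the scalar $\rho^{-1}\prod_j\frac{1-\overline{\xi_j}z}{1-\overline{\xi_j}z/\rho}$ out and control it.

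The gap is in your justification of that scalar bound. The inequality
$|\rho\xi_j - z|\ge(\sin\alpha_s)\,|\xi_j-z|$ does \emph{not} hold for all $z\notin\overline{E_s}$: it is a local statement at $\xi_j$. Take $N=1$, $\xi_1=1$, $s=1/2$, $z=-1$, $\rho=1/2$. Then $z-\xi_1=-2$ is collinear with $-\xi_1$, so the relative angle is $0$, not $\ge\alpha_s$; numerically $|\rho\xi_1-z|=3/2$ while $(\sin\alpha_s)|\xi_1-z|=2\sin(\pi/3)=\sqrt3>3/2$, so the inequality fails. The cone separation you invoke only describes directions $z-\xi_j$ with $z$ \emph{near} $\xi_j$; far from $\xi_j$ the vector $z-\xi_j$ can point anywhere and in particular can align with $-\xi_j$. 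The ratio $|\xi_j-z|/|\rho\xi_j-z|$ is nevertheless uniformly bounded, but you must argue separately away from the vertex: on $\{z\notin\overline{E_s},\ |z|\le 2,\ |z-\xi_j|\ge\delta\}$ with $\rho\ge s/2$ you can use $|\rho\xi_j-z|\ge|\xi_j-z|-(1-\rho)$ when $\rho$ is close to $1$, and when $\rho$ is bounded away from $1$ the point $\rho\xi_j$ lies in $E_s$ at a distance $\ge c(\rho)>0$ from $\partial E_s$ while $z$ lies in the complement. Patching these two regions with the local sine estimate gives the uniform bound. So: the approach is sound and different from the paper's in its organisation, but the key geometric lemma needs to be stated as a local sine bound plus a separate far-field argument, not a single global sine inequality.
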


\begin{proof}
We prove the $\mathcal{R}$-Ritt$_E$ case, the proof of the Ritt$_E$ case being similar. Let $a = \frac s 2$. 
For any $\rho \in [a,1]$ and any $z\notin\overline{E_s}$ we have 
$\frac{z}{\rho}\notin\overline{E_s}$ and 
$R(z,\rho T) = 
\frac 1 \rho R(\frac z \rho,T)$. Then we write 
$$
R(z,\rho T) \prod\limits_{j=1}^N (1- \overline{\xi_j}z) = 
\frac 1 \rho \left(\prod\limits_{j=1}^N 
\cfrac{1- \overline{\xi_j}z}{1-\overline{\xi_j}
\cfrac z \rho}\right) \left(R\bigl(\tfrac z \rho , T\bigr)\prod\limits_{j=1}^N 
\bigl(1-\overline{\xi_j} \tfrac z \rho\bigr)\right).
$$
If $\abs{z}\leq 2$ and $\rho \in [a,1]$, 
then we have $\bigl\vert\frac{z}{\rho}\bigr\vert\leq \frac{2}{a}$. 
Hence by 
\cite[Remark 2.3]{BLM}, 
the set
$$
\Bigl\{R\bigl(\tfrac z \rho , T\bigr)\prod\limits_{j=1}^N \bigl(1-\overline{\xi_j}\,\tfrac z \rho\bigr)\,
:\,\rho \in [a,1],\ z \notin\overline{E_s},
\ \abs{z} \leq 2\Bigr\}
$$
is $\mathcal{R}$-bounded.

Further there exists a constant $K\geq 1$ such that 
$$
\vert \xi_j -z\vert\leq K\bigl\vert \xi_j-\tfrac{z}{\rho}\bigr\vert
$$
for all $z\notin\overline{E_s}$, $j\in\{1,\ldots,N\}$ and 
$\rho \in [a,1]$. Consequently, 
$$
\biggl\{\cfrac 1 \rho \prod\limits_{j=1}^N \cfrac{1- \overline{\xi_j}z}{1-\overline{\xi_j}\cfrac z \rho} 
\,:\,\rho \in [a,1],\ z \notin\overline{E_s},
\ \abs{z} \leq 2\biggr\}
$$
is a bounded subset of $\mathbb C$. 
We deduce the $\mathcal{R}$-boundedness of the set 
$$
\Bigl\{R(z,\rho T){\prod\limits_{j=1}^N({1-\overline{\xi_j}z}})\, 
:\,\rho \in [a,1],\ z \notin\overline{E_s},
\ \abs{z} \leq 2\Bigr\}
$$

We study now the case $\rho \in (0,a)$. 
Let $\mathcal K = \overline{D} (0,\frac 1 2)$. The mapping 
$\lambda \in \D \mapsto (1-\lambda T)^{-1}$ is well 
defined and holomorphic on $\mathcal K$, which is a compact of $\D$. Then, by 
\cite[Proposition 2.8]{W}, the set
$$
F := \bigl\{(1-\lambda T)^{-1} \,: \, \lambda \in \mathcal K\bigr\}
$$
is $\mathcal{R}$-bounded.
But for any $z \notin \overline{E_s}$ and $\rho \in (0,a]$, 
we have $\abs{\cfrac {\rho} z} \leq \cfrac 1 2$. 
Hence the set
$$
\bigl\{z R(z,\rho T) \, : \, z \notin{\overline{E_s}},\ \rho \in (0,a]\bigr\}
$$
is included in $F$, hence is $\mathcal{R}$-bounded. Then 
$$
\{R(z,\rho T) \, : \, z \notin{\overline{E_s}},\ \rho \in (0,a]\}
$$
is $\mathcal{R}$-bounded. Finally, the set
$$
\Bigl\{R(z,\rho T){\prod\limits_{j=1}^N({1-\overline{\xi_j}z}})\, 
:\,\rho \in (0,a],\ z \notin\overline{E_s},
\ \abs{z} \leq 2\Bigr\}
$$
is $\mathcal{R}$-bounded. We therefore obtain that the set (\ref{Set})
is $\mathcal{R}$-bounded. 
\end{proof}

\begin{lemma}\label{phirhoT}
Let $T : X \rightarrow X$ be a Ritt$_E$ operator of type $r\in (0,1)$, let 
$s\in(r,1)$ and let $\phi\in H^\infty_0(E_s)$. Then 
$$
\phi(\rho T)\underset{\rho \rightarrow  1}{\longrightarrow} \phi(T)
$$
in the norm topology of $B(X)$.
\end{lemma}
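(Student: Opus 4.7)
The plan is to represent both $\phi(\rho T)$ and $\phi(T)$ as contour integrals on the same curve $\partial E_u$, for a fixed $u\in(r,s)$, and then to deduce norm convergence of their difference by a vector-valued dominated convergence argument.

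First, I need the integral formula $\phi(\rho T)=\frac{1}{2\pi i}\int_{\partial E_u}\phi(z)R(z,\rho T)\,\dx z$ to hold uniformly in $\rho\in(0,1]$. Note that $\sigma(\rho T)=\rho\,\sigma(T)\subset \rho\overline{E_r}$. Because $E_u$ is convex and contains $0$, it is star-shaped with respect to the origin, so $\rho\,E_u\subset E_u$ for every $\rho\in(0,1]$; combined with the inclusion $\overline{E_r}\subset E_u$ (valid because $r<u$), this gives $\rho\overline{E_r}\subset E_u$. Hence $\partial E_u$ encloses $\sigma(\rho T)$ inside the domain of holomorphy $E_s$ of $\phi$, and the standard Dunford-Riesz calculus together with Cauchy's theorem yields the claimed formula for every $\rho\in(0,1]$ (with $\rho=1$ being the defining formula for $\phi(T)$).

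It then suffices to show
$$
\int_{\partial E_u}\phi(z)\bigl[R(z,\rho T)-R(z,T)\bigr]\,\dx z \underset{\rho\to 1}{\longrightarrow} 0
$$
in the norm of $B(X)$. For each fixed $z\in\partial E_u\setminus E$, the norm continuity of the resolvent gives $R(z,\rho T)\to R(z,T)$ in $B(X)$. For the domination, fix an auxiliary parameter $s''\in(r,u)$. Then, by the geometry of the family $\{E_t\}$, the contour satisfies $\partial E_u\setminus E\subset D(0,2)\setminus\overline{E_{s''}}$, and Lemma \ref{indRittRho} furnishes a constant $C>0$ (independent of $\rho$) with
$$
\|R(z,\rho T)\| \leq \frac{C}{\prod_{j=1}^N|\xi_j-z|},\qquad z\in\partial E_u\setminus E,\ \rho\in(0,1].
$$
Combining this with the defining estimate $|\phi(z)|\leq c\prod_{j=1}^N|\xi_j-z|^{s_j}$ of $H_0^\infty(E_s)$, the integrand is dominated uniformly in $\rho$ by $2cC\prod_{j=1}^N|\xi_j-z|^{s_j-1}$, which is integrable on $\partial E_u$ since each $s_j>0$. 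A vector-valued dominated convergence argument concludes the proof.

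The main technical point is the uniform resolvent bound on the contour $\partial E_u$, which lies \emph{inside} $\overline{E_s}$: one sidesteps this by squeezing $\partial E_u$ between two auxiliary sets via the choice of $s''\in(r,u)$, so that $\partial E_u\setminus E$ ends up in the region $D(0,2)\setminus\overline{E_{s''}}$ where Lemma \ref{indRittRho} delivers bounds that are uniform in $\rho\in(0,1]$.
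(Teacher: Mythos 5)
Your proposal is correct and follows essentially the same route as the paper: represent $\phi(\rho T)$ as a Dunford--Riesz integral over $\partial E_u$, invoke Lemma \ref{indRittRho} (applied with an auxiliary exponent $s''\in(r,u)$ so that $\partial E_u\setminus E\subset D(0,2)\setminus\overline{E_{s''}}$) to obtain a $\rho$-uniform resolvent bound, combine it with the $H_0^\infty$ decay of $\phi$, and conclude by dominated convergence. Your write-up is a bit more explicit than the paper's about why the contour $\partial E_u$ is admissible for every $\rho\in(0,1]$ and about the precise choice of $s''$, and you correctly write $\phi(z)$ rather than $\phi(\rho z)$ inside the integral representation of $\phi(\rho T)$.
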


\begin{proof}
Let $u\in (r,s)$. By Lemma \ref{indRittRho},
all the operators $\rho T$ are Ritt$_E$ of type $r$, and we have
$$
\phi(\rho T) = \cfrac{1} {2\pi i} \int_{\partial E_{u}} \phi(\rho z) R(z,\rho T) \dx z,\qquad \rho\in(0,1].
$$
Since $\phi \in \HI_0(E_s)$, 
there exist positive real numbers $C_1>0$, 
and $s_1,...,s_n >0$ such that 
\begin{equation}\label{C1}
\abs{\phi(z)} \leq C_1 
\prod\limits_{j=1}^N \abs{\xi_j-z}^{s_j}
\end{equation}
for all $z \in E_s$.
Hence according to Lemma \ref{indRittRho}, 
there exists $C_2>0$ such that 
$$
\norm{\phi(\rho z) R(z,\rho T)} \leq C_2 
\prod\limits_{j=1}^N \abs{\xi_j-z}^{s_j-1}
$$
for all $z\in E_s$ and $\rho\in(0,1)$.
Since the right handside function is integrable 
on $\partial E_u$, it follows from 
Lebesgue's dominated convergence theorem that
$$
\phi(\rho T) = \cfrac{1} {2\pi i} 
\int_{\partial E_{u}} \phi(\rho z) R(z,\rho T) 
\dx z \underset{\rho \rightarrow 1}{\longrightarrow} 
\cfrac{1} {2\pi i} \int_{\partial E_{u}} \phi(z) R(z,T) 
\dx z = \phi(T).
$$
\end{proof}

It is shown in \cite[Theorem 2.9]{BLM} that if $T$ is a Ritt$_E$ operator, then the set 
$$
\Bigl\{nT^{n-1}\prod\limits_{j=1}^N
(\xi_j-T)
\,:\, n\geq 1\Bigr\}
$$
is bounded.
The next statement is an extension of the latter result.

\begin{proposition}\label{Rbddalpha}
Let $T : X \rightarrow X$ be a Ritt$_E$ operator 
(resp. an $\mathcal{R}$-Ritt$_E$ operator). For any $\alpha >0$, the set
$$
\Bigl\{n^\alpha(\rho T)^{n-1}
\prod\limits_{j=1}^N(I-\rho \overline{\xi_j}T)^\alpha 
\,:\, n \geq 1, \rho \in (0,1]\Bigr\}
$$
is bounded (resp. $\mathcal{R}$-bounded).
\end{proposition}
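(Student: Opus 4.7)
The plan is to realise each operator $n^\alpha(\rho T)^{n-1}\prod_{j=1}^N(I-\rho\overline{\xi_j}T)^\alpha$ as a Dunford-type integral and to bound it uniformly in $n$ and $\rho$ by deforming the contour of integration to one whose scale depends on $n$. Set
$$
\phi_{n,\alpha}(z) = n^\alpha z^{n-1} \prod_{j=1}^N (1-\overline{\xi_j}z)^\alpha,
$$
where we fix a holomorphic branch of each $(1-\overline{\xi_j}\,\cdot\,)^\alpha$ on a neighbourhood of $\overline{E_s}$, for $s \in (r,1)$. Then $\phi_{n,\alpha}\in H_0^\infty(E_s)$, and Lemma \ref{indRittRho} ensures that $\rho T$ is Ritt$_E$ of the same type $r$ as $T$, with constants uniform in $\rho \in (0,1]$. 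Identifying the sectorial fractional power $(I-\rho\overline{\xi_j}T)^\alpha$ with the value of the holomorphic functional calculus on the function $(1-\overline{\xi_j}z)^\alpha$, one has $\phi_{n,\alpha}(\rho T) = n^\alpha(\rho T)^{n-1}\prod_j(I-\rho\overline{\xi_j}T)^\alpha$. I construct a contour $\Gamma_n$ by fixing $u\in(r,s)$ and $c>0$ small, and replacing near each $\xi_j$ the two incoming pieces of $\partial E_u$ by a short arc of the circle $\{|z-\xi_j|=c/n\}$ lying outside $\overline{E_u}$. Deforming $\partial E_u$ to $\Gamma_n$ within the resolvent set of $\rho T$ gives
$$
\phi_{n,\alpha}(\rho T) = \frac{1}{2\pi i}\int_{\Gamma_n}\phi_{n,\alpha}(z)\,R(z,\rho T)\,dz.
$$

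On $\Gamma_n$, Lemma \ref{indRittRho} yields the pointwise bound
$$
\|\phi_{n,\alpha}(z)R(z,\rho T)\| \leq C n^\alpha |z|^{n-1} \prod_{k=1}^N|1-\overline{\xi_k}z|^{\alpha-1},
$$
uniformly in $\rho\in(0,1]$. On the small arc around $\xi_j$, the substitution $z=\xi_j(1-w/n)$ with $|w|=c$ gives $|z|^{n-1}\leq Ce^{-\Re(w)}$, the $j$-th factor in the product contributes $(c/n)^{\alpha-1}$, and the arc length is $O(1/n)$, so the local contribution is $O(1)$. On a portion of $\partial E_u$ issued from $\xi_j$ and parameterised by the distance $t$ to $\xi_j$ with $t\in[c/n,L]$, the geometry of $E_u$ forces $|z(t)|^{n-1}\leq e^{-c_j(n-1)t}$ for some $c_j>0$, while $|1-\overline{\xi_j}z(t)|=t$; the contribution is dominated by $\int_{c/n}^L Cn^\alpha e^{-c_j(n-1)t} t^{\alpha-1}\,dt$, which, after the change of variable $\tau=c_j(n-1)t$, reduces to a tail of the Gamma integral and is $O(1)$. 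The remaining circular piece $\{|z|=u\}$ of $\partial E_u$ contributes $O(n^\alpha u^{n-1})$, which is $o(1)$. Summing, $\|\phi_{n,\alpha}(\rho T)\|$ is uniformly bounded in $n\geq 1$ and $\rho \in (0,1]$.

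For the $\mathcal{R}$-bounded case I reuse the same contour and the same pointwise computations, but invoke the $\mathcal{R}$-boundedness of $\{R(z,\rho T)\prod_j(1-\overline{\xi_j}z)\}$ given by Lemma \ref{indRittRho}, together with the standard fact that the Bochner integral of an $\mathcal{R}$-bounded operator-valued function against an integrable scalar weight has $\mathcal{R}$-bound controlled by the $L^1$ norm of the weight. All the scalar estimates above transfer verbatim. The one delicate point is the choice of the scale $c/n$ of the local detour around each $\xi_j$: it is precisely the scale at which the resolvent blow-up (of order $n$) is balanced against the zero of $\prod_j(1-\overline{\xi_j}z)^\alpha$ and the contour length, yielding an $O(1)$ local contribution; non-integer $\alpha$ causes no additional difficulty once a holomorphic branch of the relevant power is fixed on a neighbourhood of $\overline{E_s}$.
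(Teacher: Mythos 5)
Your proof is correct and rests on the same pillars as the paper's: the Dunford--Riesz representation of $n^\alpha(\rho T)^{n-1}\prod_j(I-\rho\overline{\xi_j}T)^\alpha$ as a contour integral, the $\mathcal{R}$-boundedness of $\bigl\{\prod_j(1-\overline{\xi_j}z)R(z,\rho T)\bigr\}$ uniformly in $\rho$ (Lemma \ref{indRittRho}), the observation that the near-vertex exponential decay $|z|^{n-1}\lesssim e^{-c(n-1)t}$ reduces the contour estimate to a Gamma-integral bound, and the passage from a uniform $L^1$ bound of scalar weights to $\mathcal{R}$-boundedness of the resulting operators (\cite[Theorem 8.5.2, Propositions 8.1.21--8.1.22]{HVVW}). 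The one genuine difference is the choice of contour: the paper integrates over the fixed curve $\partial E_s$ and directly shows that $I_n=n^\alpha\int_{\partial E_s}|\lambda|^n\prod_j|1-\overline{\xi_j}\lambda|^{\alpha-1}|\dx\lambda|$ is uniformly bounded, absorbing the integrable singularity $t^{\alpha-1}$ at each $\xi_j$ into the estimate; you instead deform to an $n$-dependent contour $\Gamma_n$ with a detour of radius $c/n$ around each vertex, making the critical balancing scale explicit but requiring the extra bookkeeping of the arc contribution and the contour deformation inside the resolvent set. For $\alpha>0$ both routes work; your version is slightly longer but more geometrically transparent and would remain usable in a limiting case where the $t^{\alpha-1}$ singularity would cease to be integrable. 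Two tiny points you leave implicit: the case $n=1$ needs a separate, trivial remark (your change of variable uses $n-1$), and the extension from $\rho\in(0,1)$ to $\rho=1$ uses that $\mathcal{R}$-bounds pass to strong limits.
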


\begin{proof} 
We prove the $\mathcal{R}$-Ritt$_E$ case only, the case Ritt$_E$ being similar and simpler. 
So assume that $T$ is $\mathcal{R}$-Ritt$_E$. Let $r\in(0,1)$ such that the conditions of Lemma \ref{indRittRho} hold true - in particular, 
$r$ is $E$-large enough 
(see \cite{BLM} for the definition). Fix some $s\in(r,1)$ 
and decompose the boundary of $\partial E_s$ as follows, 
with $\theta = \arccos(s)$. For any $k \in \{1,..,N\}$, let
$$
\Gamma_{k,1} := [\xi_k,se^{i\theta}\xi_k] \quad \mathrm{and} \quad
\Gamma_{k,2} := [se^{-i\theta}\xi_k,\xi_k].
$$
We define $\Gamma_{k}$ as the arc of circle of centre $0$ linking 
$se^{i\theta}\xi_k$ to $se^{-i\theta}\xi_{k+1}$, 
oriented counterclockwise, with the convention 
that $\xi_{N+1} = \xi_1$. Thus,
$\partial E_s$ is the concatenation of the $\Gamma_k$,
$\Gamma_{k,1}$ and $\Gamma_{k,2}$, for $k \in \{1,..,N\}$.

Consider a fixed number $\alpha >0$. For any integer 
$n\geq 1$ and any $\rho \in (0,1)$, the Dunford-Riesz 
functional calculus applied 
to the operator $(\rho T)^{n-1}\prod\limits_{j=1}^N(I-\overline{\xi_j} 
\rho T)^\alpha$ gives : 
\begin{equation}\label{FormulaContour}
(\rho T)^{n-1}\prod\limits_{j=1}^N(I-\overline{\xi_j} \rho T)^\alpha = \frac 1 {2\pi i} \int_{\partial E_s} \lambda^{n-1}\prod\limits_{j=1}^N(1-\overline{\xi_j} \lambda)^\alpha R(\lambda,\rho T) \dx \lambda.
\end{equation}
We multiply both sides by $n^\alpha$ and isolate $\prod\limits_{j=1}^N(1-\overline{\xi_j}\lambda) R(\lambda,\rho T)$ in the integral, which gives 
$$
n^{\alpha}(\rho T)^{n-1}\prod\limits_{j=1}^N(I-\overline{\xi_j} \rho T)^\alpha = \frac {n^\alpha} {2\pi i} \int_{\partial E_s} \lambda^{n-1}\prod\limits_{j=1}^N(1-\overline{\xi_j} \lambda)^{\alpha-1}  \prod\limits_{j=1}^N(1-\overline{\xi_j} \lambda)R(\lambda,\rho T) 
\dx \lambda.
$$

According to Lemma \ref{indRittRho}, the set
$$
F := \Bigl\{R(z,\rho T) \prod\limits_{j=1}^N (1-\overline{\xi_j}z) \,: \, 
\rho \in (0, 1) , z \in \partial E_{s} \setminus E\Bigr\}
$$
is $\mathcal{R}$-bounded. Hence if we 
show that the integrals 
$$
{\int_{\partial E_s}} n^\alpha \Bigl\vert
\lambda^{n-1} \prod\limits_{j=1}^N (1-\overline{\xi_j}\lambda)^{\alpha-1} 
\Bigr\vert\,\vert\dx \lambda\vert
$$
are uniformly bounded for $n \geq 1$, by a constant $C_0>0$ say,
then according to the proof of \cite[Theorem 8.5.2]{HVVW}, we
will obtain that the operators
$n^{\alpha}(\rho T)^{n-1}\prod\limits_{j=1}^N(I-\overline{\xi_j} \rho T)^\alpha$ 
belong to the strong closure of 
the absolute convex hull of $2C_0F$, which is 
$\mathcal{R}$-bounded by \cite[Propositions 8.1.21 and 8.1.22]{HVVW}.

Thus it suffices to show that the integrals
$$
I_n = n^\alpha \int_{\partial E_s} 
\abs{\lambda}^{n}\prod\limits_{j=1}^N \abs{1-\overline{\xi_j}\lambda}^{\alpha-1} 
\abs{\dx \lambda}
$$
are uniformly bounded for $n \geq 1$.

To prove this, let us decompose each of them as $I_n = \sum\limits_{k=1}^N I_{n,k,0}+I_{n,k,1}+I_{n,k,2}$, where 
$$
I_{n,k,0} = n^\alpha \int_{\Gamma_{k}} \abs{\lambda}^{n}\prod\limits_{j=1}^N \abs{1-\overline{\xi_j}\lambda}^{\alpha-1} \abs{\dx \lambda};
$$
$$
I_{n,k,1} = n^\alpha \int_{\Gamma_{k,1}} \abs{\lambda}^{n}\prod\limits_{j=1}^N \abs{1-\overline{\xi_j}\lambda}^{\alpha-1} \abs{\dx \lambda};
$$
$$
I_{n,k,2} = n^{\alpha}\int_{\Gamma_{k,2}} \abs{\lambda}^{n}\prod\limits_{j=1}^N \abs{1-\overline{\xi_j}
\lambda}^{\alpha-1} \abs{\dx \lambda}.
$$
Let us first consider the integrals $I_{n,k,0}$. 
For all $k \in \{1,\ldots,N\}$ and for any $\lambda \in \Gamma_k$, 
we both have 
\begin{equation}
\sin \theta \leq \abs{1-\overline{\xi_k}\lambda} \leq 2 
\quad\textrm{ and }\quad \abs{\lambda} = s
\end{equation}
Since the sequence $(n^\alpha s^n)_{n\geq 1}$ is bounded, 
the sequences $(I_{n,k,0})_{n\geq 1}$ are bounded as well.

Let us now consider  the integrals $I_{n,k,1}$.  
We set $\gamma = \frac \pi 2 - \theta$. The segment $\Gamma_{k,1}$ joins $\xi_k$ to $\xi_k(1 - \cos\gamma e^{-i\gamma})$, for all $k \in \{1,\ldots,N\}$. For any $t \in [0,\cos\gamma]$, we have $t^2 \leq t \cos\gamma$, and  
$$
\abs{1-te^{-i\gamma}}^2 = 1 + t^2 - 2t\cos\gamma \leq 1 - t\cos\gamma.
$$
Hence 
\begin{align*}
I_{n,k,1} & = n^\alpha \int_{0}^{\cos\gamma} \abs{1-te^{-i\gamma}}^{n}\prod\limits_{j=1}^N\abs{1-\overline{\xi_j}
\xi_k(1-te^{-i\gamma})}^{\alpha-1} \dx t\\
&\leq n^\alpha \int_{0}^{\cos\gamma} (1-t\cos\gamma)^{\frac{n}{2}}
t^{\alpha-1} \underset{j\neq k}{\prod\limits_{j=1}^N}
\abs{1-\overline{\xi_j}\xi_k(1-te^{-i\gamma})}^{\alpha-1} \dx t\\
&\leq C n^\alpha \int_{0}^{\cos\gamma} 
(1-t\cos\gamma)^{\frac{n}{2}} t^{\alpha-1} \dx t,
\end{align*}
where $C = \displaystyle{\sup\limits_{t \in 
[0,\cos\gamma]} \underset{j\neq k}{\prod\limits_{j=1}^N}
\abs{1-\overline{\xi_j}\xi_k(1-te^{-i\gamma})}^{\alpha-1}}$.

Changing $t$ into $s = t\cos\gamma$ and using the inequality $1-s \leq e^{-s}$, we deduce that 
$$
I_{n,k,1} \leq C\frac {n^\alpha} {(\cos\gamma)^\alpha} \int_{0}^{\cos^2\gamma} s^{\alpha-1}e^{-\frac{sn}2} \dx s.
$$
Changing now $s$ into $u = \cfrac {sn} 2$, we have
 \begin{align*}
I_{n,k,1} \leq C \frac {2^\alpha} 
{(\cos\gamma)^\alpha} \int_0^\infty u^{\alpha-1}e^{-u}\dx u.
\end{align*}
Hence the sequences $(I_{n,k,1})_{n\geq 1}$ are bounded, 
for all $k \in \{1,\ldots,N\}$. This is also the case 
of the sequences $(I_{n,k,2})_{n\geq 1}$, 
because $I_{n,k,1} = I_{n,k,2}$. Hence the sequence $(I_n)_{n\geq 1}$ is bounded.
\end{proof}

    

We now deal with approximation of square functions (see Definition \ref{SF}).

\begin{lemma}\label{SQlimrho1}
Consider an operator $T : X \rightarrow X$ and assume that
$T$ is $\rittE$.  Then for all $x \in \ran\left(\prod\limits_{j=1}^N(I-\overline{\xi_j}T)\right)$ 
and all $\alpha > 0$, we have $\norm{x}_{T, \alpha}<\infty$ and
$\norm{x}_{\rho T, \alpha} \rightarrow \norm{x}_{T, \alpha}$,
when $\rho \in (0,1)$ tends to $1$.
\end{lemma}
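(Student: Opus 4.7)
The plan rests on a ``raising'' trick: since $x\in\ran(P)$ with $P:=\prod_{j=1}^{N}(I-\overline{\xi_j}T)$, we may write $x=P(y)$ for some $y\in X$. Because $P$ is a polynomial in $T$, it commutes with every function of $T$, and
$$
T^{k-1}\prod_{j=1}^{N}(I-\overline{\xi_j}T)^\alpha(x) \,=\, T^{k-1}\prod_{j=1}^{N}(I-\overline{\xi_j}T)^{\alpha+1}(y).
$$
Applying Proposition \ref{Rbddalpha} with the exponent $\alpha+1$ in place of $\alpha$, the operator norm of the right-hand side is bounded by $M k^{-(\alpha+1)}$. Hence the $k$-th term of the square function has norm at most $M k^{-3/2}\|y\|$, and since $\sum k^{-3/2}<\infty$, the triangle inequality in $L^{2}(\Omega;X)$ yields $\|x\|_{T,\alpha}<\infty$.

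For the convergence $\|x\|_{\rho T,\alpha}\to\|x\|_{T,\alpha}$ as $\rho\to 1^-$, I use a three-$\eps$ argument. The continuity of the Dunford--Riesz functional calculus with respect to the parameter $\rho$ (as in Lemma \ref{phirhoT}) ensures that for each fixed $k$, $(\rho T)^{k-1}\prod_{j}(I-\overline{\xi_j}\rho T)^\alpha\to T^{k-1}\prod_{j}(I-\overline{\xi_j}T)^\alpha$ in operator norm. Consequently, for any finite $N_0$, the finite sum $\sum_{k=1}^{N_0}\eps_k\otimes u_k^{\rho T}$ converges in $\Rad{X}$ to $\sum_{k=1}^{N_0}\eps_k\otimes u_k^T$ as $\rho\to 1$. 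It therefore suffices to establish a uniform tail estimate: for every $\eps>0$ there exist $N_0$ and $\rho_0<1$ such that
$$
\Bigl\|\sum_{k=N_0+1}^{n}\eps_k\otimes u_k^{\rho T}\Bigr\|_{\Rad{X}}\,<\,\eps \qquad \text{for all }n>N_0 \text{ and all }\rho\in[\rho_0,1].
$$

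The uniform tail bound is the chief obstacle; the naive decomposition $P(y)=P_\rho(y)+(P-P_\rho)(y)$, with $P_\rho:=\prod_{j}(I-\overline{\xi_j}\rho T)$, yields an error term of norm $O((1-\rho)k^{-1/2})$ in $X$, which is \emph{not} summable in $k$ uniformly in $\rho$. To circumvent this, I would use the Dunford--Riesz representation
$$
u_k^{\rho T} \,=\, \frac{k^{\alpha-1/2}}{2\pi i}\int_{\partial E_s}\psi_k(\lambda,\rho)\,R(\lambda,T)y\,d\lambda,\quad \psi_k(\lambda,\rho)\,:=\,(\rho\lambda)^{k-1}\prod_{j=1}^{N}(1-\overline{\xi_j}\rho\lambda)^\alpha\prod_{j=1}^{N}(1-\overline{\xi_j}\lambda).
$$
Since $R(\lambda,T)y$ is a single vector in $X$, the Rademacher norm of the integrand (a scalar-vector product) reduces to a scalar $\ell^2$-norm; Minkowski's integral inequality then gives
$$
\Bigl\|\sum_{k=N_0+1}^{n}\eps_k\otimes u_k^{\rho T}\Bigr\|_{\Rad{X}} \,\leq\, \frac{1}{2\pi}\int_{\partial E_s}\|R(\lambda,T)y\|\,\Bigl(\sum_{k>N_0}k^{2\alpha-1}|\psi_k(\lambda,\rho)|^{2}\Bigr)^{1/2}|d\lambda|.
$$
Combining the Ritt$_E$ resolvent estimate $\|R(\lambda,T)\|\lesssim\prod_{j}|1-\overline{\xi_j}\lambda|^{-1}$ with $\sum_{k\geq 1}k^{2\alpha-1}r^{2k}\asymp(1-r^2)^{-2\alpha}$ and a local analysis near each corner $\xi_j$ (parametrising $\lambda=\xi_j(1-te^{-i\gamma})$, $t\in[0,\cos\gamma]$, as in the proof of Proposition \ref{Rbddalpha}), one verifies that the key ratio $\prod_j|1-\overline{\xi_j}\rho\lambda|^{\alpha}/(1-|\rho\lambda|^{2})^{\alpha}$ is uniformly bounded for $\rho$ close to $1$ and $\lambda\in\partial E_s$: near the corner $\xi_j$ both numerator and denominator are $\asymp((1-\rho)+t)^\alpha$, so their ratio remains $O(1)$. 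Consequently the integrand is dominated by a function independent of $\rho,N_0$ and integrable over the compact curve $\partial E_s$. Dominated convergence yields $\int F_{N_0}(\lambda,\rho)|d\lambda|\to 0$ pointwise in $\rho$ as $N_0\to\infty$, and since this function of $\rho$ is continuous and monotone in $N_0$, Dini's theorem promotes the convergence to be uniform on $[\rho_0,1]$, completing the three-$\eps$ argument.
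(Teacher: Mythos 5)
Your finiteness argument is essentially the paper's ``raising trick'' (write $x=\prod_{j}(I-\overline{\xi_j}T)y$ so that the exponent $\alpha$ becomes $\alpha+1$, then apply Proposition~\ref{Rbddalpha}). However, you apply it only at $\rho=1$, and then treat the convergence step as a separate problem requiring a uniform tail bound, which you obtain via a Dunford--Riesz contour representation, reduction to a scalar $\ell^2$ sum, Minkowski's inequality, a corner-by-corner comparison of $\prod_j|1-\overline{\xi_j}\rho\lambda|^{\alpha}$ with $(1-|\rho\lambda|^2)^\alpha$, dominated convergence, and Dini's theorem. The paper avoids all of this by noting that the raising trick can be made \emph{uniform in $\rho$}: setting $x_\rho=\prod_j(I-\overline{\xi_j}\rho T)^{-1}x$ and showing $\sup_\rho\|x_\rho\|<\infty$ (a short resolvent estimate), one obtains a single bound $\sum_{k}\sup_{\rho\in(0,1)}\|k^{\alpha-1/2}(\rho T)^{k-1}\prod_j(I-\overline{\xi_j}\rho T)^\alpha x\|\lesssim\sum_k k^{-3/2}<\infty$, which yields both the finiteness of $\|x\|_{T,\alpha}$ and the convergence $\|x\|_{\rho T,\alpha}\to\|x\|_{T,\alpha}$ in one stroke (termwise convergence plus a $\rho$-uniform summable majorant). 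Your route is valid and more ``hands-on'' -- you do see explicitly where the degeneracy near the corners $\xi_j$ is compensated -- but it relies on several estimates (the ratio bound along $\partial E_s$, continuity of the integrals in $\rho$ up to $\rho=1$, applicability of Dini) that must each be checked, whereas the paper's $x_\rho$ device reduces everything to the uniform boundedness already secured in Lemma~\ref{indRittRho} and Proposition~\ref{Rbddalpha}.
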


\begin{proof}
Let $x \in \ran\left(\prod\limits_{j=1}^N(I-\overline{\xi_j}T)\right)$ and let $x' \in X$ such that $x = \prod\limits_{j=1}^N(I-\overline{\xi_j}T) x'$. We set $x_\rho = \prod\limits_{j=1}^N(I-\overline{\xi_j}\rho T)^{-1}x$ for all $\rho \in (0,1)$. This is well-defined (each 
$I-\overline{\xi_j}\rho T$ is invertible) and we have
$$
x_\rho = \prod\limits_{j=1}^N(I-\overline{\xi_j}T)(I-\overline{\xi_j}\rho T)^{-1} x'.
$$
We write
\begin{align*}
\prod\limits_{j=1}^N(I-\overline{\xi_j}T)(I-\overline{\xi_j}\rho T)^{-1} 
&= \prod\limits_{j=1}^N[I-\overline{\xi_j}(1-\rho)(I-\overline{\xi_j}\rho T)^{-1}T]\\
&= \prod\limits_{j=1}^N[I-\overline{\xi_j}(\rho^{-1}-1)R(\rho^{-1},\overline{\xi_j}T)T].
\end{align*}
Since $T$ is a Ritt$_E$ operator, there exists $c>0$ such that for any $j_0 \in \{1,\ldots,N\}$ and 
any $\rho\in (0,1)$, we have
\begin{align*}
\bignorm{(\rho^{-1}-1)R(\rho^{-1},\overline{\xi_{j_0}}T)} 
&= \bignorm{(\rho^{-1}-1)R(\rho^{-1}\xi_{j_0},T)}\\
&\leq (\rho^{-1}-1)\cfrac{c}{\prod\limits_{j=1}^N\abs{\xi_j-\rho^{-1}\xi_{j_0}}}\\
&= \cfrac{c}{\prod\limits_{j=1, j\neq j_0}^N \abs{\xi_j-\rho^{-1}\xi_{j_0}}}\\
&\leq \cfrac{c}{\prod\limits_{j=1, j\neq j_0}^N \abs{\xi_j-\xi_{j_0}}}.
\end{align*}
We deduce that the operators
$$
I-\overline{\xi_j}(\rho^{-1}-1)R(\rho^{-1},\overline{\xi_j}T)T
$$
are uniformly bounded,
when $\rho \in (0,1)$ and $j \in \{1,\ldots,N\}$. 
Hence there exists a constant $C_1>0$ such that
$$
\norm{x_{\rho}} \leq C_1,\qquad\rho\in (0,1).
$$

Let $r \in (0,1)$ such that $T$ is of type $r$ and let $s \in (r,1)$. Then for any $\rho\in(0,1)$ and any $k \geq 1$ we can write, using (\ref{FormulaContour}) with $\alpha +1 $ instead of $\alpha$,
$$
k^{\alpha+1} (\rho T)^{k-1} \prod\limits_{j=1}^N 
(I - \overline{\xi_j}\rho T)^{\alpha+1} = \frac {k^{\alpha+1}} {2\pi i} 
\int_{\partial E_s} \lambda^{k-1} \prod\limits_{j=1}^N 
(1 - \overline{\xi_j} \lambda)^\alpha \left[\prod\limits_{j=1}^N (1 - \overline{\xi_j} \lambda) R(\lambda,\rho T)\right] \dx \lambda.
$$

It follows from Lemma \ref{indRittRho} that the set
$$
\Bigl\{\prod\limits_{j=1}^N (1 - \overline{\xi_j} \lambda) R(\lambda,\rho T)\, :\,
\lambda\in\partial E_s\setminus E,\ \rho\in (0,1)\Bigr\}.
$$
is bounded.
Hence there exists a constant $C$ independent of 
$\rho\in (0,1)$ and $k\geq 1$ such that
$$
k^{\alpha + 1} \Bignorm{(\rho T)^{k-1} \prod\limits_{j=1}^N (I - \overline{\xi_j}\rho T)^{\alpha+1}} 
\leq C\, \frac {k^{\alpha+1}} {2\pi} \int_{\partial E_s} \abs{\lambda}^{k-1}  
\prod\limits_{j=1}^N \vert 1-\overline{\xi_j}\lambda\vert^\alpha\vert\dx{\lambda}\vert.
$$
According to the proof of Proposition \ref{Rbddalpha}, the right 
handside of this inequality is uniformly bounded, for $k\geq 1$.
We deduce 
that there exists a constant $C_2 >0$ independent of 
$\rho\in (0,1)$ and $k\geq 1$ such that
$$
k^{\alpha + 1} \Bignorm{(\rho T)^{k-1} \prod\limits_{j=1}^N
(I - \overline{\xi_j}\rho T)^{\alpha+1}} \leq C_2.
$$

Now, we write
\begin{align*}
\sum\limits_{k=1}^\infty k^{\alpha-1/2} \sup\limits_{\rho \in (0,1)} 
\bignorm{(\rho T)^{k-1} &\prod\limits_{j=1}^N(I-\overline{\xi_j}\rho T)^\alpha x} \\
&= \sum\limits_{k=1}^\infty k^{\alpha-1/2} \sup\limits_{\rho \in (0,1)} \bignorm{(\rho T)^{k-1}\prod\limits_{j=1}^N(I-\overline{\xi_j}\rho T)^{\alpha+1} x_\rho}\\
&= \sum\limits_{k=1}^\infty k^{-\frac{3}{2}}\left(k^{\alpha+1} \sup\limits_{\rho \in (0,1)} \bignorm{(\rho T)^{k-1}\prod\limits_{j=1}^N(I-\overline{\xi_j}\rho T)^{\alpha+1} x_\rho}\right)\\
&\leq C_1C_2\sum\limits_{k=1}^\infty k^{-\frac{3}{2}} < \infty.
\end{align*}
Since 
$$(\rho T)^{k-1} (I - \rho \overline{\xi_j}\rho T)^\alpha x 
\underset{\rho \rightarrow  1}{\longrightarrow} T^{k-1} 
(I - \overline{\xi_j}T)^\alpha x,
$$
for all $k \geq 1$, then 
by the above estimate, we derive that 
$\norm{x}_{\rho T, \alpha} \underset{\rho \rightarrow  1}{\longrightarrow} \norm{x}_{T, \alpha}$
and that the latter is finite.
\end{proof}

We end this section with a property of stability for the notion of $\mathcal{R}$-Ritt$_E$ under the assumption of $K$-convexity.

\begin{theorem}
    Let $X$ be a $K$-convex Banach space and $T : X \to X$ a $\mathcal{R}$-Ritt$_E$ operator. Then $T^*$ is also a $\mathcal{R}$-Ritt$_E$ operator.
\end{theorem}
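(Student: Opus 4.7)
The plan is to deduce the statement directly from Theorem \ref{KcvRbdd}, using the characterization of $\mathcal{R}$-Ritt$_E$ operators provided by Proposition \ref{R-RITTE}. The hypothesis that $X$ is $K$-convex is there precisely so that $\mathcal{R}$-boundedness transfers to the adjoint family, and once this is available the proof is essentially a matter of matching the family of resolvents of $T^*$ to the adjoint of the family of resolvents of $T$.

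First I would gather the elementary spectral facts. For any bounded operator on a Banach space one has $\sigma(T^*)=\sigma(T)$ and $R(z,T^*)=R(z,T)^*$ for every $z$ in the common resolvent set. Applying Proposition \ref{R-RITTE} to $T$, I would fix $r\in(0,1)$ with $\sigma(T)\subset\overline{E_r}$ such that for every $s\in(r,1)$ the set
\[
F_s := \Bigl\{R(z,T)\prod_{j=1}^N(\xi_j-z) : z\in D(0,2)\setminus\overline{E_s}\Bigr\}
\]
is $\mathcal{R}$-bounded in $B(X)$. The spectral inclusion $\sigma(T^*)\subset\overline{E_r}$ is then immediate, and it remains to establish the analogous $\mathcal{R}$-boundedness on $X^*$.

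Applying Theorem \ref{KcvRbdd}, the set $F_s^*=\{S^*:S\in F_s\}$ is $\mathcal{R}$-bounded in $B(X^*)$. Using the scalar-adjoint rule $(\lambda S)^*=\overline{\lambda}\,S^*$ together with $R(z,T^*)=R(z,T)^*$, the elements of $F_s^*$ are precisely
\[
\overline{\prod_{j=1}^N(\xi_j-z)}\,R(z,T^*),\qquad z\in D(0,2)\setminus\overline{E_s}.
\]
To recover the form in Proposition \ref{R-RITTE}, I would then multiply each such operator by the unimodular scalar
\[
\mu(z) := \frac{\prod_{j=1}^N(\xi_j-z)}{\overline{\prod_{j=1}^N(\xi_j-z)}},
\]
which is well defined and of modulus one for $z\in D(0,2)\setminus\overline{E_s}$. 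Since $\mathcal{R}$-boundedness is preserved under multiplication by a uniformly bounded family of scalars (Kahane's contraction principle), this yields the $\mathcal{R}$-boundedness of
\[
\Bigl\{R(z,T^*)\prod_{j=1}^N(\xi_j-z) : z\in D(0,2)\setminus\overline{E_s}\Bigr\}
\]
in $B(X^*)$. Invoking Proposition \ref{R-RITTE} in the reverse direction then identifies $T^*$ as an $\mathcal{R}$-Ritt$_E$ operator.

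The argument is essentially mechanical once Theorem \ref{KcvRbdd} is available; the only point that requires any real care is the bookkeeping of complex conjugates created by moving scalars across the adjoint, which is settled by the unimodular factor $\mu(z)$ above. No new estimates are needed, so I expect no substantive obstacle beyond verifying that the scalar factor used in Proposition \ref{R-RITTE} behaves correctly under conjugation.
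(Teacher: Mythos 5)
Your proof follows essentially the same route as the paper's: use Theorem \ref{KcvRbdd} to pass the $\mathcal{R}$-boundedness of the defining set to the adjoint family. However, there is a small conceptual slip that introduces an unnecessary detour. In a complex Banach space, $T^*$ denotes the Banach dual (transpose) of $T$, which is a \emph{linear} map $B(X)\to B(X^*)$, so $(\lambda S)^* = \lambda S^*$ for any scalar $\lambda$ --- there is no complex conjugation. The conjugate-linear rule $(\lambda S)^* = \overline{\lambda}\,S^*$ you invoked is the Hilbert-space adjoint convention and does not apply here. Consequently the adjoints of the elements of $F_s$ are already $\prod_{j=1}^N(\xi_j-z)\,R(z,T^*)$ directly, and the unimodular factor $\mu(z)$ you introduce is superfluous. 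Because multiplication by a uniformly bounded scalar family preserves $\mathcal{R}$-boundedness in any case, your argument still reaches the correct conclusion, but the cleaner version (and the one the paper uses) just records $\sigma(T^*)=\sigma(T)$, $R(z,T)^*=R(z,T^*)$, applies Theorem \ref{KcvRbdd}, and is done without any conjugation bookkeeping.
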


\begin{proof}
    By definition of $\mathcal{R}$-Ritt$_E$ operators, the set $F = \{\prod\limits_{j=1}^N(I - \overline{\xi_j}T) R(z,T) : 1 < \abs{z} < 2\}$ is $\mathcal{R}$-bounded.
    
    Thus, the set $F^* = \{S^*, S \in F\}$, equal to $\{\prod\limits_{j=1}^N (I - \overline{\xi_j}T^*) R(z,T^*) : 1 < \abs{z} < 2\}$, is $\mathcal{R}$-bounded as well, by Theorem \ref{KcvRbdd}. 
    Hence, $T^*$ is $\mathcal{R}$-Ritt$_E$.
\end{proof}

\section{Franks-McIntosh decomposition on $E_r$ domains}
\label{FMI}

Let $r \in (0,1)$ be $E$-large enough and let $s\in(r,1)$.
The aim of this section is to exhibit a countable family $(\Phi_i)_i$ 
of functions in $H_0^\infty(E_{r})$, such that 
any function $h\in H^\infty(E_{s})$ can be written as an infinite sum
$$
h = \sum\limits_{i} \alpha_i \Phi_i,
$$
for some bounded family $(\alpha_i)_i$ of complex numbers. 
We will also prove such a decomposition property for vector valued 
functions $h$. This construction is an adaptation of the 
Franks-McIntosh decomposition from \cite{FMI}, 
for domains $E_r$. It is actually implicit in 
\cite{FMI} but no actual proof is written there.
This section is also a generalization of \cite[Section 6]{ArLM0}.

\subsection{Construction of the family $(\Phi_i)_i$}
We assume that $r$ is $E$-large enough and fix some $s\in(r,1)$. 
In this case, $(\partial E_r \cap \partial E_s) \setminus E = \emptyset$,
and the boundaries $\partial E_r$ and $\partial E_s$ 
are composed of segments of the type $[\xi_j,.]$ and of
arcs of circle joining two consecutive segments 
(see Figure \ref{fig2} below). We will build a path 
connecting all the $\xi_j$ and
located between $\partial E_r$ and $\partial E_s$.

We set $\xi_{N+1} = \xi_1$.
We fix some $r'\in(r,s)$, as well as some $\theta_0>0$ and $\delta>0$. 
For any $j \in \{1,\ldots,N\}$, we  define the oriented segments, 
$$
\Gamma_{1,j} = [\xi_j,r' e^{i\theta_0}\xi_j] \qquad \hbox{and}\qquad
\Gamma_{2,j} = [r' e^{-i\theta_0}\xi_{j+1},\xi_{j+1}].
$$ 
Choosing $\theta_0$ small enough, we        
can consider, for any $j \in \{1,\ldots,N\}$, a finite family of segments 
$\{\gamma_{0,j,k}\}_{k=0}^{M_j}$, whose concatenation $\Gamma_{0,j}$
is a path going from
$r'e^{i\theta_0}\xi_j$ to $r'e^{-i\theta_0}\xi_{j+1}$ counterclockwise, without
meeting either $\partial E_r$ or $\partial E_s$. For any 
$j \in \{1,\ldots,N\}$ and $k\in \{0,\ldots,M_j\}$, we let
$z_{0,j,k}$ denote the center of $\gamma_{0,j,k}$ and
we let $D_{0,j,k}$ denote the  open disc of centre $z_{0,j,k}$ and
radius $\delta.$ We may choose $\delta$ small enough to ensure that the discs
$D_{0,j,k}$ do not intersect $\partial E_r$. We may also assume
that the segments $\gamma_{0,j,k}$ have length $\leq\frac{\delta}{2}$.

\smallskip
\begin{center}
\includegraphics[scale=0.3]{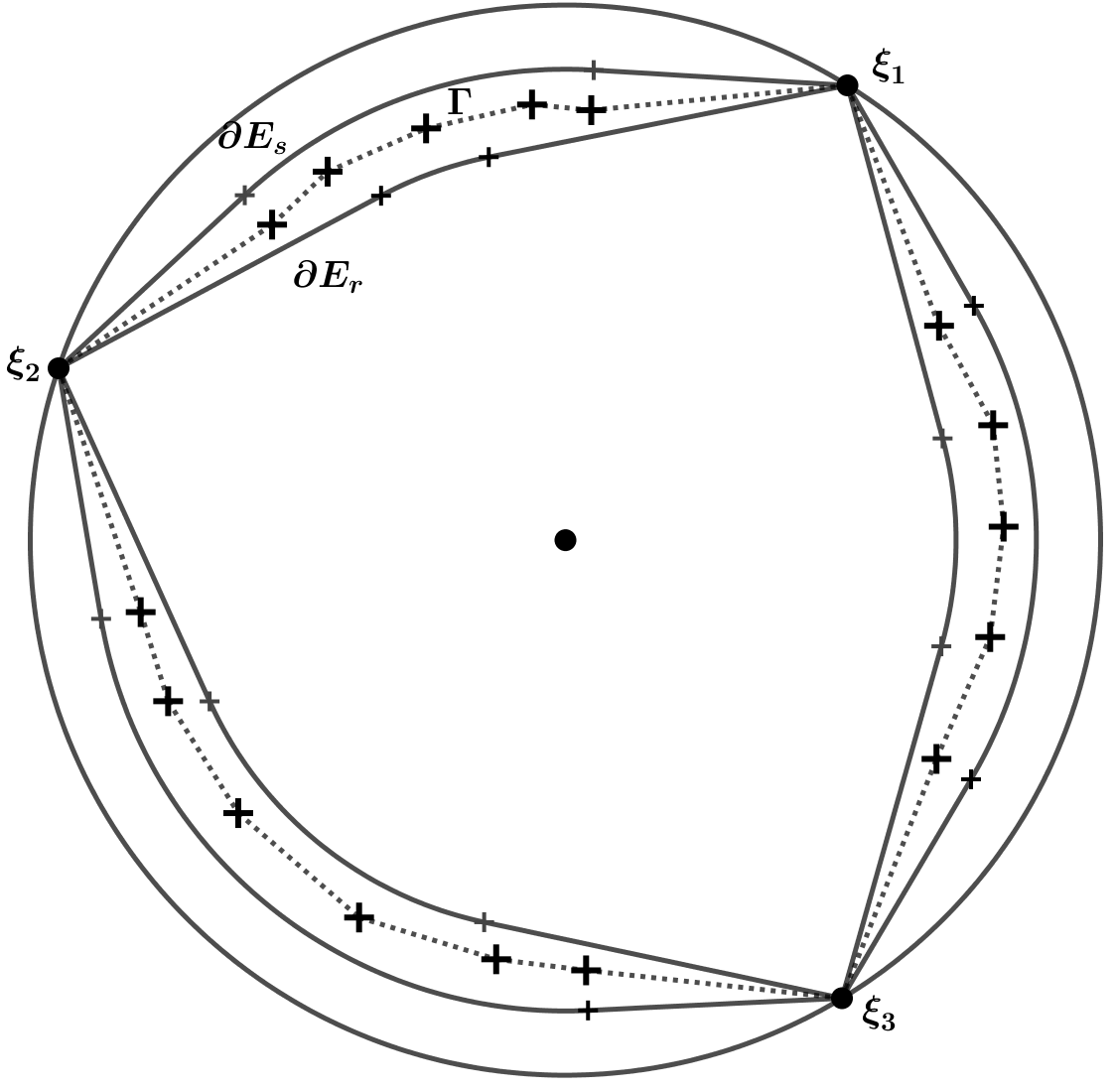}
\captionof{figure}{Illustration of the contour with $N = 3$}
\label{fig2}
\end{center}
\smallskip
Finally we let $\Gamma$ be the concatenation of the (finite) family,
$$
\bigl\{\Gamma_{m,j}\,
:\, m \in \{0,1,2\},\, j\in\{1,\ldots,N\}\bigr\},
$$
oriented counterclockwise. By construction, $\Gamma$ is a simple
closed contour containing $E_r$  in its interior.

Note that all the segments $\Gamma_{1,j}$ and $\Gamma_{2,j}$ 
have the same length. Let $l$ denote this length. Let $\rho>1$.
For any $j \in \{1,\ldots,N\}$, we consider the segments
$$
\gamma_{1,j,k} = \bigl\{z \in \Gamma_{1,j} : l\rho^{-k-1} \leq     
\abs{z-\xi_j} \leq l\rho^{-k}\bigr\} , \qquad k\geq 0,
$$      
whose union is equal to $\Gamma_{1,j}\setminus\{\xi_j\}$.
We let $z_{1,j,k}$ denote the center of $\gamma_{1,j,k}$ and we let
$D_{1,j,k}$ be the open disc of centre 
$z_{1,j,k}$ and radius $s_{k} = l(\rho^{-k}-\rho^{-k-1})$.
We choose $\rho$ close enough to $1$ to ensure that for all $j \in \{1,\ldots,N\}$
and $k\geq 0$, $D_{1,j,k}$ does not intersect $\partial{E_{r}}$.
Next we define, analogously, segments 
$$
\gamma_{2,j,k} = \bigl\{z \in \Gamma_{2,j} : l\rho^{-k-1} \leq     
\abs{z-\xi_j} \leq l\rho^{-k}\bigr\} 
$$
as well as $z_{2,j,k}$ and
$D_{2,j,k}$ for all $j \in \{1,\ldots,N\}$ and $k\geq 0$.

We now set, for all $\xi \in E_{r}$ and  $z$ in the union of all the discs 
$D_{0,j,k}$, $D_{1,j,k}$ and $D_{2,j,k}$,
$$
K(z,\xi) = \cfrac {\prod\limits_{j=1}^N
(1-\overline{\xi_j}z)^{1/2}(1-\overline{\xi_j}\xi)^{1/2}}{z-\xi}\,.
$$
We will need estimates on this function, which will be based on the
following.

\begin{lemma}\label{Unif}
For $\xi\in E_r$ and $z$ in the union of all the discs 
$D_{0,j,k}$, $D_{1,j,k}$ and $D_{2,j,k}$, we have uniform estimates 
$$
\abs{1-\overline{\xi_j}z}\lesssim \abs{z-\xi}
\qquad\hbox{and}\qquad
\abs{1-\overline{\xi_j}\xi}\lesssim \abs{z-\xi}.
$$
\end{lemma}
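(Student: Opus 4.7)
The plan is to reduce both inequalities to geometric estimates about the triangle with vertices $\xi_j, z, \xi$, and then to split by disc type. Since $\abs{\xi_j} = 1$, we have $\abs{1 - \overline{\xi_j}z} = \abs{\xi_j - z}$ and $\abs{1 - \overline{\xi_j}\xi} = \abs{\xi_j - \xi}$, so it suffices to prove $\abs{\xi_j - z} \lesssim \abs{z - \xi}$ and $\abs{\xi_j - \xi} \lesssim \abs{z - \xi}$ uniformly in $j$, $k$, $z$ in the relevant disc, and $\xi \in E_r$, with $m = 2$ handled symmetrically to $m = 1$.

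For $z \in D_{0,j,k}$, the estimates are essentially trivial: by construction $D_{0,j,k}$ lies in the annular region between $\partial E_r$ and $\partial E_s$ at a fixed positive distance from $\partial E_r$, and is in particular disjoint from $E_r$. Any segment from $z$ to a point of $E_r$ crosses $\partial E_r$, so $\abs{z - \xi} \geq d(D_{0,j,k}, \partial E_r) > 0$, while $\abs{\xi_j - z}, \abs{\xi_j - \xi} \leq 2$, and both bounds follow at once.

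For $m \in \{1,2\}$, the plan is to exhibit a uniform lower bound $\theta_1 > 0$ on the angle of the triangle $(\xi_j, z, \xi)$ at the vertex $\xi_j$, and then to apply the law of sines. The interior angle of $E_r$ at $\xi_j$ has opening $2\arcsin r < \pi$, cut out by the two tangent segments of $\partial E_r$ at $\xi_j$, so every ray from $\xi_j$ into $E_r$ lies in the corresponding cone $C_j$. On the other hand, $\Gamma_{1,j}$ sits strictly between $\partial E_r$ and $\partial E_s$, so its direction from $\xi_j$ is separated from $C_j$ by some fixed angle $\theta_2 > 0$. A short computation shows that $D_{1,j,k}$, viewed from $\xi_j$, spans an angular width $\arcsin(2(\rho - 1)/(\rho + 1))$, independent of $k$; the constraint $D_{1,j,k}\cap\partial E_r = \emptyset$ already imposed on $\rho$ in the construction is precisely the inequality $\arcsin(2(\rho-1)/(\rho+1)) < \theta_2$, which yields a uniform $\theta_1 := \theta_2 - \arcsin(2(\rho-1)/(\rho+1)) > 0$. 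The law of sines in the triangle then gives $\abs{\xi_j - z} \leq \abs{z - \xi}/\sin\theta_1$ and $\abs{\xi_j - \xi} \leq \abs{z - \xi}/\sin\theta_1$. The main obstacle is ensuring that $\theta_2$, and hence $\theta_1$, can be chosen uniformly in $j \in \{1,\ldots,N\}$; this rests on the ``$E$-large enough'' hypothesis, which guarantees that each interior angle of $E_r$ at the vertices is bounded away from $\pi$ in a manner depending only on $r$ and $E$.
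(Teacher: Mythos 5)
Your proof takes essentially the same route as the paper's: reduce via $\abs{\xi_j}=1$ to the inequalities $\abs{\xi_j-z}\lesssim\abs{z-\xi}$ and $\abs{\xi_j-\xi}\lesssim\abs{z-\xi}$; dispose of the discs $D_{0,j,k}$ using the uniform positive lower bound on $\abs{z-\xi}$; and for $m\in\{1,2\}$ control the angle of the triangle $(\xi_j,z,\xi)$ at the vertex $\xi_j$ and pass to the side-length estimate by a projection/law-of-sines argument. The paper phrases the last step in terms of the arguments of $\xi\overline{\xi_j}-1$ and $z\overline{\xi_j}-1$ and a projection onto the line through $\Gamma_{1,j}$, while you compute the angular half-width of $D_{1,j,k}$ explicitly, but the geometric content is the same.

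One point to tighten before this is fully correct. The inequality you attribute to the law of sines, namely $\abs{\xi_j-z}\leq\abs{z-\xi}/\sin\theta_1$, does not follow from the lower bound $\angle\xi_j\geq\theta_1$ alone: the elementary estimate is $\abs{z-\xi}\geq\abs{\xi_j-z}\sin(\angle\xi_j)$ (drop a perpendicular from $z$ to the line through $\xi_j$ and $\xi$), so what is needed is a positive lower bound on $\sin(\angle\xi_j)$, i.e.\ that $\angle\xi_j$ be uniformly bounded away from $\pi$ as well as from $0$. Your writeup produces the lower bound $\theta_1$ but does not address the upper bound. The geometry does supply it: the ray $\xi_j\to z$ lies, up to the fixed angular half-width of $D_{1,j,k}$ (or $D_{2,j,k}$), between the tangent rays at $\xi_j$ of $\partial E_r$ and $\partial E_s$, and the ray $\xi_j\to\xi$ lies in the $E_r$ cone of half-opening $\arcsin r$, so $\angle\xi_j$ is at most roughly $\arcsin r+\arcsin s$ plus the disc half-width, which stays below $\pi$ because $r,s<1$. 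The paper's proof encodes this same constraint by confining the two arguments $\mathrm{Arg}(z\overline{\xi_j}-1)$ and $\mathrm{Arg}(\xi\overline{\xi_j}-1)$ to a band of width strictly less than $\pi$. Once you add the upper bound on $\angle\xi_j$, your argument is complete and essentially coincides with the paper's.
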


\begin{proof}
Let $z \in D_{1,j,k}$ for some $j \in \{1,..,N\}$ and $k \geq 0$, 
and let $\xi \in E_r$. It follows from the above construction
that there exist $\nu\in\bigl(\frac{\pi}{2},\pi\bigr)$
and $\theta\in\bigl(0,\pi-\nu\bigr)$, not depending
on either $z$ or $\xi$, such that 
$$
{\rm Arg}(\xi\overline{\xi_j}-1)\geq  \nu+\theta
\qquad\hbox{and}\qquad
{\rm Arg}(z\overline{\xi_j}-1)\leq \nu.
$$

\smallskip
\begin{center}
\includegraphics[scale=0.4]{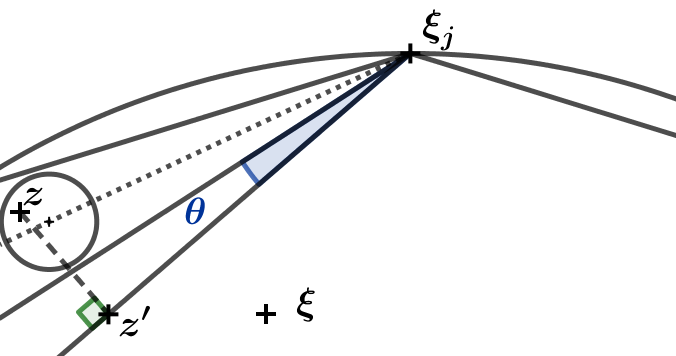}
\label{fig3}
\end{center}

\smallskip
Let $z'$ be the projection of $z$ on the line generated 
by the segment $\Gamma_{1,j}$. We have the inequality
$$
\cfrac {\abs{z'-z}}{\abs{\xi_j-z}} \geq \sin(\theta).
$$
Since $\abs{z-\xi} \geq \abs{z'-\xi}$, we deduce 
$$
\abs{\xi_j-z} \leq \cfrac{1}{\sin(\theta)}\abs{\xi-z}
$$
Switching the roles of $\xi$ and $z$, we obtain as well that 
$\abs{\xi_j-\xi} \leq \cfrac{1}{\sin(\theta)}\abs{\xi-z}$.
These estimates are also true when $z \in 
D_{2,j,k}$ for some $j \in \{1,..,N\}$ and $k \geq 0$. 

Finally, $\abs{\xi-z}$ is bounded away from $0$
when $\xi\in E_r$ and  $z$ in the union of all the discs 
$D_{0,j,k}$. The result follows.
\end{proof}

Let $j\in\{1,\ldots,N\}$.
Observe that if $\xi \in E_{r}$ is
sufficiently close to the vertex $\xi_{j}$, then
there exists $q \in \N$ such that 
\begin{equation}\label{estimateq}
l\rho^{-q-1} \leq \abs{\xi_{j}-\xi} \leq l\rho^{-q}.
\end{equation}
We let $B_{j,q}$ be the set of all these $\xi$. 

\smallskip
\begin{center}
\includegraphics[scale=0.4]{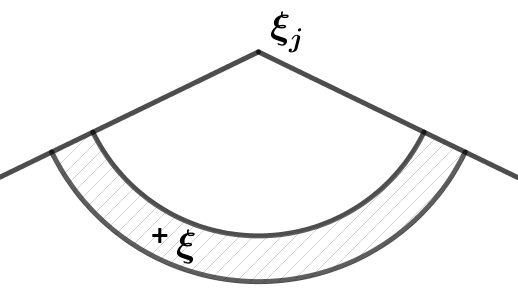}
\captionof{figure}{Sectorial band of little 
radius $l\rho^{-q-1}$ and of big radius $l\rho^{-q}$}
\end{center}

\smallskip
Further we 
set 
$$
A_j=E_r\setminus\bigcup_{q\geq 0} B_{j,q}.
$$

\begin{lemma}\label{K} Let $j\in\{1,\ldots,N\}$.
We have the following uniform estimates.
\begin{itemize}
\item [(a)] For $\xi\in B_{j,q}$ and $z$ 
in the union of all the discs 
$D_{1,j,k}$ and $D_{2,j,k}$ (for $k\geq 0$),
$$
\abs{K(z,\xi)} \lesssim \rho^{-\frac{\vert k-q\vert}{2}}.
$$
\item [(b)] For $\xi\in A_j$ and $z$ 
in the union of all the discs 
$D_{1,j,k}$ and $D_{2,j,k}$ (for $k\geq 0$),
$$
\abs{K(z,\xi)}\lesssim \rho^{-\frac{k}{2}}.
$$
\item [(c)] For $\xi\in E_r$ and $z$ 
in the union of all the discs 
$D_{0,j,k}$ (for $k\geq 0$), $\abs{K(z,\xi)}\lesssim 1$.
\end{itemize}
\end{lemma}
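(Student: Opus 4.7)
The plan is to estimate $K(z,\xi)$ by first separating the factor corresponding to the index $j$ appearing in the subscript of the disc containing $z$, and then bounding each piece using the geometry of the construction together with Lemma \ref{Unif}.

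For the factorization, I would write
$$
|K(z,\xi)| = \frac{|\xi_j-z|^{1/2}|\xi_j-\xi|^{1/2}}{|z-\xi|} \cdot \prod_{i\neq j}|1-\overline{\xi_i}z|^{1/2}|1-\overline{\xi_i}\xi|^{1/2},
$$
using $|1-\overline{\xi_j}w| = |\xi_j-w|$ since $|\xi_j|=1$. Since $z$ lies in one of the discs near $\xi_j$ and $\xi \in E_r \subset \overline{\mathbb D}$, the factors $|1-\overline{\xi_i}z|$ and $|1-\overline{\xi_i}\xi|$ for $i\neq j$ stay uniformly bounded away from both $0$ and $\infty$ (because the other vertices $\xi_i$ are separated from $\xi_j$). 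Thus the product over $i\neq j$ contributes only a bounded constant, and the whole task reduces to estimating the isolated quotient involving $\xi_j$.

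For part (a), by the construction of $D_{m,j,k}$ for $m\in\{1,2\}$ one has $|\xi_j-z|\asymp l\rho^{-k}$, and by definition of $B_{j,q}$ one has $|\xi_j-\xi|\asymp l\rho^{-q}$. For the denominator, Lemma \ref{Unif} gives $|\xi_j-z|\lesssim|z-\xi|$ and $|\xi_j-\xi|\lesssim|z-\xi|$, so
$$
|z-\xi|\gtrsim \max\bigl(|\xi_j-z|,|\xi_j-\xi|\bigr)\gtrsim \rho^{-\min(k,q)}.
$$
Combining these three estimates,
$$
|K(z,\xi)|\lesssim \frac{\rho^{-k/2}\rho^{-q/2}}{\rho^{-\min(k,q)}}=\rho^{\min(k,q)-(k+q)/2}=\rho^{-|k-q|/2},
$$
as required. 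For part (b), $\xi\in A_j$ means $|\xi_j-\xi|\geq l$ (because the bands $B_{j,q}$ exhaust $\{|\xi_j-\xi|\leq l\}$), while $|\xi_j-z|\lesssim \rho^{-k}$ as before. Lemma \ref{Unif} now yields $|z-\xi|\gtrsim |\xi_j-\xi|\geq l$, so
$$
|K(z,\xi)|\lesssim \frac{\rho^{-k/2}|\xi_j-\xi|^{1/2}}{|\xi_j-\xi|}=\frac{\rho^{-k/2}}{|\xi_j-\xi|^{1/2}}\lesssim \rho^{-k/2}.
$$

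For part (c), $z$ lies in one of the discs $D_{0,j,k}$, which by construction are at a positive distance both from each vertex of $E$ and from $\overline{E_r}$. Hence $|\xi_j-z|$ is bounded (above and below) by constants, $|1-\overline{\xi_i}\xi|\leq 2$ trivially, and $|z-\xi|$ is bounded below by the positive distance between $D_{0,j,k}$ and $\overline{E_r}$. All factors in $K(z,\xi)$ are therefore $\lesssim 1$, giving (c). The only slightly delicate point in this plan is ensuring that Lemma \ref{Unif} really does yield the uniform bound $|z-\xi|\gtrsim \max(|\xi_j-z|,|\xi_j-\xi|)$ with constants independent of $k$ and $q$; this is built into the angle constant $\theta$ produced in that lemma, so no further work is needed.
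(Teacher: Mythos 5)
Your proof is correct and follows essentially the same route as the paper's: isolate the factor $|\xi_j-z|^{1/2}|\xi_j-\xi|^{1/2}/|z-\xi|$, bound the remaining vertices' contributions by a constant, and use Lemma \ref{Unif} to control $|z-\xi|$ from below by $\max\bigl(|\xi_j-z|,|\xi_j-\xi|\bigr)$. The minor differences (in (b) you divide by $|\xi_j-\xi|^{1/2}$ whereas the paper simply notes $|z-\xi|$ and $|\xi_j-\xi|$ are respectively bounded below and above) are purely cosmetic.
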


\begin{proof}
Consider $\xi\in A_j$ and $z$ 
in the union of all the discs 
$D_{1,j,k}$ and $D_{2,j,k}$. Then $\abs{\xi-z}$ is bounded away from $0$, hence
$$
\abs{K(z,\xi)}\lesssim 
\abs{\xi_{j}-z}^{1/2}\lesssim \rho^{-\frac{k}{2}}.
$$
This proves (b).

Now consider $\xi\in B_{j,q}$ for some $q\geq 0$
and $z$ 
in the union of all the discs 
$D_{1,j,k}$ and $D_{2,j,k}$. Then using Lemma \ref{Unif}
and (\ref{estimateq}), we have
\begin{align*}
\abs{K(z,\xi)} &= \frac {\prod\limits_{d=1}^N
\abs{1-\bar{\xi_d}z}^{1/2}\abs{1-\bar{\xi_d}\xi}^{1/2}}{\abs{z-\xi}}\\
&= \frac{\abs{\xi_{j}-z}^{1/2}\abs{\xi_{j}-\xi}^{1/2}} 
{\abs{z-\xi}}
\prod_{d\neq j}\abs{\xi_d-z}^{1/2}\abs{\xi_d-\xi}^{1/2} \\
&\lesssim \frac {\rho^{-k/2}\rho^{-q/2}}{\abs{z-\xi}}\\
&\lesssim \frac {\rho^{-k/2}\rho^{-q/2}}{\rho^{-\min(k,q)}}
= \rho^{\min(k,q)-\frac{q}{2}-\frac{k}{2}}= \rho^{\frac {-\abs{k-q}}{2}}.
\end{align*}
This proves (a).

For $\xi\in E_r$ and $z$ 
in the union of all the discs 
$D_{0,j,k}$ (for $k\geq 0$), $\abs{\xi-z}$ is bounded away from $0$.
The assertion (c) follows at once.
\end{proof}

We consider now the Hilbert spaces 
$$
H_{m,j,k} = 
L^2\left(\gamma_{m,j,k},\left\vert\frac {\dx{z}}{\xi_{j}-z}\right\vert\right),
$$
for 
$m \in \{1,2\}$, 
$j \in \{1,\ldots,N\}$ and $k\geq 0$. For $m=0$, 
we consider 
$$
H_{0,j,k} = L^2\left(\gamma_{0,j,k},\abs{\dx{z}}\right),
$$
where 
$ j \in \{1,\dots,N\}$ and 
$k \in \{0,..,M_j\}$.

We first observe that the underlying measure spaces
on which these $L^2$-spaces are built
are uniformly bounded. Indeed if $C$ is the greatest length of the segments $\{\gamma_{0,j,k}\}$, for all $j\in \{1,\ldots,N\}$ and all 
$k \in \{0,..,M_j\}$, then
$$  
\int_{\gamma_{0,j,k}} \abs{\dx{z}} \leq C
$$
Further if $m\in\{1,2\}$, then a straightforward computation yields
\begin{equation}\label{logrho}
\int_{\gamma_{m,j,k}} \abs{\frac{\dx{z}}{\xi_{j_0}-z}} = \log(\rho).
\end{equation}

For any $m,j,k$, we consider a Hilbertian
basis $\{e_{m,j,k,p}\}_{p=0}^\infty$ of $H_{m,j,k}$ 
such that for all $n\geq 0$,
the linear space spanned by 
$\{e_{m,j,k,p}\}_{p=0}^n$ is equal to the space of polynomial 
functions of degree less than or equal to $n$.
Next we define, for all $m,j,k,p$, a function
$\Phi_{m,j,k,p}$ defined on $E_r$ by
$$
\Phi_{m,j,k,p} (\xi) = \frac 1 {2\pi i}  
\int_{\gamma_{m,j,k}} 
\overline{e_{m,j,k,p}(z)} K(z,\xi) 
\frac {\dx{z}}{\prod\limits_{d=1}^N(1 -\overline{\xi_{d}}z)}.
$$
This is a well-defined holomorphic function. Moreover by the 
Cauchy-Schwarz inequality and the definition
of $K$ we have, for $\xi \in E_r$,
\begin{align*}
\abs{\Phi_{m,j,k,p}(\xi)} &\leq \frac 1 {2\pi} 
\left({\int_{\gamma_{m,j,k}} 
\abs{e_{m,j,k,p}(z)}^2\frac{\abs{\dx{z}}}{\abs{\xi_{j}-z}}}
\right)^{1/2}\left(\int_{\gamma_{m,j,k}} 
\abs{\frac{K(z,\xi)}{\prod\limits_{d\not=j}(\xi_d-z)}}^2
\frac{\abs{\dx{z}}}{\abs{\xi_{j}-z}}\right)^{1/2}\\
&\leq \frac{\prod\limits_{d=1}^N \abs{\xi_d-\xi}^{1/2}}{2\pi}
\left({\int_{\gamma_{m,j,k}}
\frac{1}{\prod\limits_{d\not=j}
\vert \xi_d-z\vert}\,\frac{\abs{\dx{z}}}{\abs{z-\xi}^2}}
\right)^{1/2}\\
&\lesssim \prod\limits_{d=1}^N \abs{\xi_d-\xi}^{1/2}.
\end{align*}
Indeed, $\abs{z-\xi}$ is bounded away from $0$ when 
$z$ varies in 
$\gamma_{m,j,k}$ and $\xi$ varies in
$E_r$. Hence the integral
in the right-hand side is bounded independently of $\xi\in E_r$
Thus, 
\begin{equation}\label{HI0}
\Phi_{m,j,k,p} \in \HI_0(E_r).
\end{equation}

We will now establish much more accurate estimates on $\Phi_{m,j,k,p}$, 
given by the next lemma.

\begin{lemma}\label{estimationsPHI}
Let $j\in\{1,\ldots,N\}$. 
There exists $c>0$ such that:
\begin{itemize}
\item [(a)]
For all $m\in\{1,2\}$, $k\geq 0$, $p\geq 0$,
$q\geq 0$ and $\xi\in B_{j,q}$,
$$
\abs{\Phi_{m,j,k,p}(\xi)} \leq c2^{-p}
\rho^{-\frac{\abs{k-q}}{2}}.
$$
\item [(b)]
For all $m\in\{1,2\}$, $k\geq 0$, $p\geq 0$,
and $\xi\in A_{j}$,
$$
\abs{\Phi_{m,j,k,p}(\xi)} \leq c2^{-p}
\rho^{-\frac{k}{2}}.
$$
\item [(c)] 
For all $k\geq 0$, $p\geq 0$ 
and $\xi\in E_r$,
$$
\abs{\Phi_{0,j,k,p}(\xi)} \leq c2^{-p}.
$$
\end{itemize}
\end{lemma}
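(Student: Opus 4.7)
The three bounds follow a single Franks--McIntosh reproducing-kernel argument: rewrite $\Phi_{m,j,k,p}(\xi)$ as an inner product in $H_{m,j,k}$, exploit the orthogonality of $e_{m,j,k,p}$ against polynomials of degree $<p$ to extract the factor $2^{-p}$, and conclude by Bessel--Parseval combined with Lemma~\ref{K}. The three cases (a), (b), (c) will differ only in which case of Lemma~\ref{K} is invoked at the end.

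First, I would express $\Phi_{m,j,k,p}(\xi)$ as an $H_{m,j,k}$-pairing. For $m\in\{1,2\}$, since $\xi_j\in\T$ and $\gamma_{m,j,k}$ is a straight segment pointing toward $\xi_j$, the form $\dx z/\prod_{d=1}^N(1-\overline{\xi_d}z)$ coincides, up to a unit-modulus constant, with $g(z)\,\abs{\dx z/(\xi_j-z)}$, where $g(z)=\prod_{d\neq j}(1-\overline{\xi_d}z)^{-1}$ is holomorphic and uniformly bounded on $D_{m,j,k}$ (the vertices $\xi_d$ with $d\neq j$ stay at positive distance from these discs by the construction of $\Gamma$). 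Setting $F_\xi(z)=K(z,\xi)g(z)$, which is holomorphic on the simply connected disc $D_{m,j,k}\subset\C\setminus E$ after choosing branches of $(1-\overline{\xi_d}z)^{1/2}$ on it, one obtains
\begin{equation*}
\abs{\Phi_{m,j,k,p}(\xi)} \,=\, \frac{1}{2\pi}\abs{\langle F_\xi,e_{m,j,k,p}\rangle_{H_{m,j,k}}}.
\end{equation*}
The identity holds verbatim for $m=0$ with the Hilbert-space measure $\abs{\dx z}$ and the obvious analogue of $F_\xi$.

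Next I would expand $F_\xi(z)=\sum_{n\geq 0}b_n\bigl((z-z_{m,j,k})/s_k\bigr)^n$ on $D_{m,j,k}$ (replacing $s_k$ by the radius $\delta$ when $m=0$). Since $e_{m,j,k,p}$ is orthogonal to polynomials of degree $<p$, subtracting the Taylor polynomial of order $p-1$ from $F_\xi$ leaves the pairing unchanged. On $\gamma_{m,j,k}$ one has $\abs{(z-z_{m,j,k})/s_k}\leq 1/2$, so Cauchy--Schwarz in the index $n$ gives
\begin{equation*}
\sup_{z\in\gamma_{m,j,k}}\,\Bigl|\sum_{n\geq p}b_n\bigl((z-z_{m,j,k})/s_k\bigr)^n\Bigr|\,\lesssim\,2^{-p}\Bigl(\sum_{n\geq 0}\abs{b_n}^2\Bigr)^{1/2}.
\end{equation*}
The Bessel--Parseval identity on $L^2(\partial D_{m,j,k},\abs{\dx z}/(2\pi s_k))$ controls the $\ell^2$-sum by $\sup_{D_{m,j,k}}\abs{F_\xi}^2$, and Lemma~\ref{K} (together with the boundedness of $g$) estimates this supremum by $\rho^{-\abs{k-q}/2}$ in case (a), by $\rho^{-k/2}$ in case (b), and by an absolute constant in case (c). A final Cauchy--Schwarz in $H_{m,j,k}$, combined with the uniform bounds on $\mu(\gamma_{m,j,k})$ coming from (\ref{logrho}) and the length bounds on the $\gamma_{0,j,k}$, yields the three estimates.

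The main bookkeeping obstacle is the reformulation in the first step: carefully pairing $\dx z/\prod_d(1-\overline{\xi_d}z)$ against the Hilbert-space measure $\abs{\dx z/(\xi_j-z)}$ so that only the regular factor $g$ carrying the $\xi_d$ with $d\neq j$ multiplies $K(z,\xi)$. This is precisely where the geometric setup of Section~\ref{FMI} is used: each $D_{m,j,k}$ sits in a simply connected region of $\C\setminus E$ with only one nearby vertex $\xi_j$, so holomorphic branches of $(1-\overline{\xi_d}z)^{1/2}$ can be chosen on the disc and the factor $g$ is uniformly bounded there.
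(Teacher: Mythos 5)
Your proposal is correct and follows essentially the same route as the paper's proof: expand $K(z,\xi)/\prod_{d\neq j}(1-\overline{\xi_d}z)$ (your $F_\xi$) in a power series on $D_{m,j,k}$, use the orthogonality of $e_{m,j,k,p}$ to polynomials of degree $<p$ together with the proportionality of the holomorphic measure $\dx z/(1-\overline{\xi_j}z)$ to the Hilbert-space measure $|\dx z/(\xi_j-z)|$ to truncate the expansion at $n\geq p$, bound $\sum|b_n|^2$ via Bessel--Parseval on $L^2(\partial D_{m,j,k})$ and Lemma~\ref{K}, extract $2^{-p}$ from $|z-z_{m,j,k}|\leq s_k/2$, and finish by Cauchy--Schwarz in $H_{m,j,k}$ with the uniform measure bound (\ref{logrho}). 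The only difference is presentational (casting the first step explicitly as an $H_{m,j,k}$-pairing rather than working with the truncated integral directly), which does not change the substance.
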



\begin{proof}
We consider $m\in\{1,2\}$, $k\geq 0$, $p\geq 0$,
$q\geq 0$ and $\xi\in B_{j,q}$ as in (a).
The restriction of $K(.,\xi)$ to $D_{m,j,k}$ is analytic.
Hence we can consider the 
power series expansion of the function 
$\displaystyle{z \mapsto \frac{K(z,\xi)}{\prod\limits_{d\not=j}
(1-\overline{\xi_{d}}z)}}$ on $D_{m,j,k}$, that we write as
$$
\displaystyle{\frac{K(z,\xi)}{\prod\limits_{d\not=j}
(1-\overline{\xi_{d}}z)}\, = \, \sum\limits_{n=0}^\infty b_{m,j,k,n}
\left(\frac{z-z_{m,j,k}}{s_k}\right)^n},
\qquad z\in D_{m,j,k}.
$$
One can write
$$
\displaystyle{\Phi_{m,j,k,p}(\xi) = \frac {1}{2\pi i} 
\int_{\gamma_{m,j,k}}\overline{e_{m,j,k,p}(z)}\left(
\sum\limits_{n=0}^\infty b_{m,j,k,n}
\left(\frac{z-z_{m,j,k}}{s_k}\right)^n\right) 
\frac{\dx{z}}{1-\overline{\xi_{j}}z}}.
$$
In the Hilbert space $H_{m,j,k}$, 
each $e_{m,j,k,p}$ is orthogonal to all polynomial functions of 
degree $n<p$. Moreover the measures 
$\displaystyle{\abs{\frac{\dx{z}}{\xi_{j_0}-z}}}$ 
and $\displaystyle{\frac{\dx{z}}{1-\overline{\xi_{j}}z}}$
are proportional. Hence we actually have
\begin{equation}\label{Develop}
\Phi_{m,j,k,p}(\xi) = 
\frac 1 {2\pi i}
\int_{\gamma_{m,j,k}}\overline{e_{m,j,k,p}(z)} 
\left(\sum\limits_{n=p}^\infty b_{m,j,k,n} 
\left(\frac{z-z_{m,j,k}}{s_k}\right)^n\right) 
\frac{\dx{z}}{1-\overline{\xi_{j}}z}.
\end{equation}

To estimate $\Phi_{m,j,k,p}(\xi)$, we will now estimate the sum 
$\sum_{n=p}^\infty\,$ in the above integral. 
The Bessel-Parseval identity in $L^2\left(\partial 
D_{m,j_0,k}, \frac{\abs{\dx{z}}}{2\pi s_k}\right)$ yields
\begin{align*}
\left(\sum_{n=0}^\infty \abs{b_{m,j,k,p}}^2\right)^{1/2} &=
\biggnorm{z\mapsto \frac 
{K(z,\xi)}{\prod\limits_{d\not=j}^N(1-\overline{\xi_{d}}z)}}_{L^2}\\
&\leq\sup\Bigl\{\abs{K(z,\xi)}
\prod\limits_{d\not=j}\bigl\vert
1-\overline{\xi_{d}}z\bigr\vert^{-1} \,:\ z \in D_{m,j,k}\Bigr\}.
\end{align*}
Applying Lemma \ref{K}, (a), we deduce an estimate
$$
\left(\sum_{n=0}^\infty \abs{b_{m,j,k,p}}^2\right)^{1/2}
\lesssim \rho^{-\frac{\vert k-q\vert}{2}}.
$$
Let $z\in\gamma_{m,j,k}$.
By the Cauchy-Schwarz inequality, the above estimate and
the fact that $\vert z -z_{m,j,k}\vert\leq s_k/2$, we
obtain
\begin{align*}
\sum_{n=p}^\infty \abs{b_{m,j,k,n}
\left(\frac{z-z_{m,j,k}}{s_k}\right)^n} 
&\leq \left(\sum_{n=p}^\infty 
\abs{b_{m,j,k,n}}^2\right)^{1/2}
\left(\sum_{n=p}^\infty 
\abs{\frac{z-z_{m,j,k}}{s_k}}^{2n}\right)^{1/2}\\
&\leq \left(\sum_{n=0}^\infty 
\abs{b_{m,j,k,n}}^2\right)^{1/2}
\left(\sum_{n=p}^\infty \frac{1}{{2}^{n}}\right)^{1/2}\\
&\lesssim \rho^{-\frac{\abs{k-q}}{2}}  2^{-p}.
\end{align*}

By (\ref{Develop}), $\vert \Phi_{m,j,k,p}(\xi) \vert$ is less than or
equal to
$$
\frac{1}{2\pi} 
\left(\int_{\gamma_{m,j,k}}\vert e_{m,j,k,p}(z)\vert^2
\Bigl\vert\frac{\dx{z}}{1-\overline{\xi_{j}}z}\Bigr\vert\right)^{1/2}
\left(\int_{\gamma_{m,j,k}}\biggl\vert 
\sum\limits_{n=p}^\infty b_{m,j,k,n} 
\left(\frac{z-z_{m,j,k}}{s_k}\right)^n \biggr\vert^2
\Bigl\vert\frac{\dx{z}}{1-\overline{\xi_{j}}z}\Bigr\vert\right)^{1/2}
$$
We deduce $\vert \Phi_{m,j,k,p}(\xi) \vert\lesssim 
\rho^{-\frac{\abs{k-q}}{2}} 2^{-p}$, which proves (a).

The proofs of (b) and (c) are similar, using the corresponding
parts of Lemma \ref{K}.
\end{proof}

The following is a straightforward consequence of Lemma \ref{estimationsPHI}.

\begin{proposition}\label{CV}
There exists a constant $C>0$ such that for all $\xi\in E_r$,
$$
\sum_{m,j,k,p}\abs{\Phi_{m,j,k,p}(\xi)}^{1/2} \leq C.
$$
\end{proposition}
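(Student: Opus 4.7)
The plan is to decompose the sum according to the index $m\in\{0,1,2\}$ and then, for each $j\in\{1,\ldots,N\}$, to use the fact that $E_r = A_j\cup\bigcup_{q\geq 0}B_{j,q}$ (a disjoint decomposition) to apply the appropriate estimate from Lemma \ref{estimationsPHI}. Since the number of indices $j$ is finite, it suffices to control, for each fixed $j$, the partial sums corresponding to $m=0$, $m=1$ and $m=2$.

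Fix $\xi\in E_r$ and $j\in\{1,\ldots,N\}$. For the contribution of $m=0$, the index $k$ runs over the \emph{finite} set $\{0,\ldots,M_j\}$, and Lemma \ref{estimationsPHI}(c) gives
$$
\sum_{k=0}^{M_j}\sum_{p\geq 0}\abs{\Phi_{0,j,k,p}(\xi)}^{1/2}
\leq c^{1/2}(M_j+1)\sum_{p\geq 0}2^{-p/2},
$$
which is a finite constant independent of $\xi$. For $m\in\{1,2\}$, I would split into two cases. If $\xi\in A_j$, Lemma \ref{estimationsPHI}(b) yields
$$
\sum_{k\geq 0}\sum_{p\geq 0}\abs{\Phi_{m,j,k,p}(\xi)}^{1/2}
\leq c^{1/2}\Bigl(\sum_{p\geq 0}2^{-p/2}\Bigr)\Bigl(\sum_{k\geq 0}\rho^{-k/4}\Bigr),
$$
which is finite since $\rho>1$. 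If $\xi\in B_{j,q}$ for some (unique) $q\geq 0$, Lemma \ref{estimationsPHI}(a) yields
$$
\sum_{k\geq 0}\sum_{p\geq 0}\abs{\Phi_{m,j,k,p}(\xi)}^{1/2}
\leq c^{1/2}\Bigl(\sum_{p\geq 0}2^{-p/2}\Bigr)\Bigl(\sum_{k\geq 0}\rho^{-\abs{k-q}/4}\Bigr).
$$

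The key observation, and essentially the only substantive point, is that the last sum satisfies
$$
\sum_{k\geq 0}\rho^{-\abs{k-q}/4}\leq \sum_{k\in\Z}\rho^{-\abs{k}/4}<\infty,
$$
so the bound is \emph{independent of $q$}. Summing the $m=0,1,2$ contributions over the finitely many $j\in\{1,\ldots,N\}$ produces a uniform constant $C>0$, which completes the argument. There is no real obstacle here; the content of the proof is packaged entirely in the geometric decay estimates of Lemma \ref{estimationsPHI}, and the role of the exponent $1/2$ in $\abs{\Phi_{m,j,k,p}(\xi)}^{1/2}$ is simply to make both the geometric series in $p$ (with ratio $2^{-1/2}$) and the two-sided geometric series in $k-q$ (with ratio $\rho^{-1/4}$) summable.
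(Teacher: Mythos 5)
Your proof is correct and fills in exactly the argument the paper leaves implicit (the paper simply calls the proposition a ``straightforward consequence'' of Lemma~\ref{estimationsPHI}). Your decomposition by $m$, the split of $E_r$ into $A_j$ and the bands $B_{j,q}$, and the observation that $\sum_{k\geq 0}\rho^{-\abs{k-q}/4}$ is bounded uniformly in $q$ are precisely the intended steps.
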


 \subsection{Decomposition of Franks-McIntosh on $E_{r}$}
	 
\begin{proposition}\label{Alpha}
Let $h \in H^\infty(E_{s})$. For any $m\in\{1,2\}$, $j\in\{1,\ldots,N\}$,
$k\geq 0$ and $p\geq 0$, set
$$
\alpha_{m,j,k,p} = \int_{\gamma_{m,j,k}}
h(z){e_{m,j,k,p}(z)} \abs{\frac{\dx{z}}{\xi_{j}-z}}.
$$
Next, for any $j\in\{1,\ldots,N\}$,
$k\in\{0,\ldots,M_j\}$ and $p\geq 0$, set
$$
\alpha_{0,j,k,p} = \int_{\gamma_{0,j,k}}
h(z){e_{m,j,k,p}(z)} \abs{\dx{z}}.
$$
Then $\{\alpha_{m,j,k,p}\}$ is a well-defined family
of complex numbers, this family is bounded
and for all $\xi \in E_{r}$ : 
$$
\displaystyle{h(\xi) = \sum\limits_{m,j,k,p} \alpha_{m,j,k,p}\Phi_{m,j,k,p}(\xi)}
$$
\end{proposition}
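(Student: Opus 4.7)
My plan is to deduce the identity from Cauchy's integral formula adapted to the contour $\Gamma$, after establishing uniform bounds on $\{\alpha_{m,j,k,p}\}$ by a Cauchy--Schwarz argument in each Hilbert space $H_{m,j,k}$.

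First, I would bound the family. Since $\gamma_{m,j,k}\subset E_s$ and $h\in H^\infty(E_s)$, the function $h$ is bounded on $\gamma_{m,j,k}$ by $\|h\|_{\infty,E_s}$. The Cauchy--Schwarz inequality in $H_{m,j,k}$, using that $e_{m,j,k,p}$ is a unit vector, gives
\[
\abs{\alpha_{m,j,k,p}} \leq \Bigl(\int_{\gamma_{m,j,k}}\abs{h(z)}^2\,d\mu_{m,j,k}\Bigr)^{1/2} \leq \|h\|_{\infty,E_s}\,\mu_{m,j,k}(\gamma_{m,j,k})^{1/2},
\]
where $\mu_{m,j,k}$ denotes the measure defining $H_{m,j,k}$. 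By (\ref{logrho}), $\mu_{m,j,k}(\gamma_{m,j,k})=\log\rho$ for $m\in\{1,2\}$, and $\mu_{0,j,k}(\gamma_{0,j,k})\leq C$, so $\abs{\alpha_{m,j,k,p}}\lesssim\|h\|_{\infty,E_s}$ uniformly in $(m,j,k,p)$.

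Second, I would establish the Cauchy-type representation
\[
h(\xi) = \frac{1}{2\pi i}\int_\Gamma h(z)\,K(z,\xi)\,\frac{\dx z}{\prod_{d=1}^N (1-\overline{\xi_d}z)},\qquad \xi\in E_r.
\]
Since $E_s$ is simply connected and each $1-\overline{\xi_d}z$ is non-vanishing on $E_s$, I may fix a holomorphic branch of $\prod_d(1-\overline{\xi_d}z)^{1/2}$ there. The function $g(z)=h(z)\big/\prod_d(1-\overline{\xi_d}z)^{1/2}$ is holomorphic on $E_s$, with $\abs{g(z)}\lesssim \prod_d\abs{\xi_d-z}^{-1/2}$, which is integrable on $\Gamma$ near each vertex. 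Applying the Cauchy integral formula to $g$ on a modification of $\Gamma$ that avoids each $\xi_d$ by a semicircular detour of radius $\varepsilon$, then letting $\varepsilon\to 0$ (the correction integrals being $O(\varepsilon^{1/2})$), yields $g(\xi)=\tfrac{1}{2\pi i}\int_\Gamma g(z)(z-\xi)^{-1}\,dz$ for $\xi\in E_r$. Multiplying both sides by $\prod_d(1-\overline{\xi_d}\xi)^{1/2}$ gives the claim.

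Third, I would decompose $\Gamma$ as the union of the $\gamma_{m,j,k}$ and observe that on each $\gamma_{m,j,k}$ the integrand can be rewritten as $h(z)\,B_\xi(z)\,d\mu_{m,j,k}(z)$ for a function $B_\xi$ bounded in $z$: for fixed $\xi\in E_r$, the only potential singularity of $K(z,\xi)/\prod_d(1-\overline{\xi_d}z)$ along $\gamma_{m,j,k}$ is the half-power $\abs{\xi_j-z}^{-1/2}$ at $\xi_j$, which is absorbed by the factor $\abs{\xi_j-z}$ produced by the change from $\abs{dz}$ to $\abs{dz/(\xi_j-z)}$. Next, the family $(\overline{e_{m,j,k,p}})_p$ is an orthonormal basis of $H_{m,j,k}$ because complex conjugation is a conjugate-linear isometry of $L^2$; the Fourier coefficient of $h$ along $\overline{e_{m,j,k,p}}$ equals $\int h\cdot e_{m,j,k,p}\,d\mu_{m,j,k}=\alpha_{m,j,k,p}$. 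Hence
\[
h|_{\gamma_{m,j,k}} = \sum_p\alpha_{m,j,k,p}\,\overline{e_{m,j,k,p}}
\]
in $L^2(\gamma_{m,j,k},d\mu_{m,j,k})$, hence also in $L^1$ since the measure is finite. Inserting this expansion and interchanging sum and integral (justified by the boundedness of $B_\xi$) gives
\[
\frac{1}{2\pi i}\int_{\gamma_{m,j,k}} h(z) K(z,\xi)\,\frac{\dx z}{\prod_d(1-\overline{\xi_d}z)} = \sum_p \alpha_{m,j,k,p}\,\Phi_{m,j,k,p}(\xi),
\]
and summing over $(m,j,k)$ yields the desired identity.

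The main obstacle is the Cauchy formula step, since $\Gamma$ passes through each vertex $\xi_d$ where the chosen branch of $\prod_d(1-\overline{\xi_d}z)^{-1/2}$ blows up, preventing a direct application. The $\varepsilon$-regularization above is standard but demands the explicit $O(\varepsilon^{1/2})$ bound for the semicircular corrections; a cleaner alternative is to invoke Cauchy's theorem first on an intermediate closed contour inside $E_s\setminus\overline{E_r}$ that strictly avoids the vertices, and then deform it to $\Gamma$, since the integrand is holomorphic and integrable on the region in between.
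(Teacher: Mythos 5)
Your proof is correct and follows essentially the same path as the paper's: bound the coefficients via Cauchy--Schwarz using the uniform estimate on $\mu_{m,j,k}(\gamma_{m,j,k})$, represent $h(\xi)$ by Cauchy's formula over $\Gamma$ applied to $h(z)\big/\prod_d(1-\overline{\xi_d}z)^{1/2}$, and expand $h|_{\gamma_{m,j,k}}$ in the Hilbert basis of $H_{m,j,k}$ before interchanging sum and integral. Your two refinements --- regularizing near the vertices to legitimize Cauchy's formula there, and observing that the relevant expansion is in the conjugate basis $\{\overline{e_{m,j,k,p}}\}_p$ so that the coefficients are indeed the $\alpha_{m,j,k,p}$ --- tighten points that the paper passes over silently.
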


\begin{proof}
Let $m \in \{1,2\}$, $j \in \{1,\ldots,N\}$, $k \geq 0$ and $p \geq 0$.
According to Cauchy-Schwarz inequality, $\alpha_{m,j,k,p}$ is well-defined and
\begin{align*}
\abs{\alpha_{m,j,k,p}} &
\leq\displaystyle{\left(\int_{\gamma_{m,j,k}}{\abs{e_{m,j,k,p}(z)}^2} 
\abs{\frac{\dx z}{\xi_j-z}}\right)^{1/2}}
\displaystyle{\left(\int_{\gamma_{m,j,k}}{\abs{h(z)}^2} 
\abs{\frac{\dx z}{\xi_j-z}}\right)^{1/2}}\\
&\leq \displaystyle{\left(\int_{\gamma_{m,j,k}}{\abs{h(z)}^2} \abs{\frac{\dx z}{\xi_j-z}}\right)^{1/2}}\\
&\leq {\rm log}(\rho)\|h\|_{\infty,E_{s}},
\end{align*}
by (\ref{logrho}). 

Likewise, we have an estimate
$$
\abs{\alpha_{0,j,k,p}}\lesssim \|h\|_{\infty,E_{s}}
$$
for $j \in \{1,\ldots,N\}$, $k \geq 0$ and $p \geq 0$.
This proves the boundedness of the family $\{\alpha_{m,j,k,p}\}$.
By Proposition \ref{CV}, we therefore have
$$
\sum_{m,j,k,p}\vert \alpha_{m,j,k,p}\Phi_{m,j,k,p}(\xi)\vert\,<\infty.
$$

Let $\xi \in E_{r}$. The Cauchy formula, applied to the function 
$\displaystyle{z \mapsto \frac {h(\xi)}{\prod\limits_{j=1}^N(1-\overline{\xi_j}z)^{1/2}}}$,
gives
\begin{align*}
h(\xi)	& = \displaystyle{\frac {1}{2\pi i}\int_{\Gamma} h(z) 
K(z,\xi)\frac{\dx z}{\prod\limits_{j=1}^N(1 -\overline{\xi_{j}}z)}}.
\end{align*}
Then we can decompose $h$ as 
$h=\sum\limits_{m=0}^2\sum\limits_{j=1}^N h_{m,j}$, with
\begin{align*}
h_{m,j}(\xi) &= \displaystyle{\frac {1}{2\pi i}\int_{\Gamma_{m,j}} h(z) 
K(z,\xi)\frac{\dx z}{\prod\limits_{j=1}^N(1 -\overline{\xi_{j}}z)}}\\
&= \displaystyle{\frac {1}{2\pi i}\sum_{k=0}^\infty\int_{\gamma_{m,j,k}} h(z) 
K(z,\xi)\frac{\dx z}{\prod\limits_{j=1}^N(1 -\overline{\xi_{j}}z)}}.
\end{align*}

We decompose the restriction of $h$ to $\gamma_{m,j,k}$ in the Hilbertian basis
$\{e_{m,j,k,p}\}_{p=0}^\infty$ of $H_{m,j,k}$,
$$
\displaystyle{h|_{\gamma_{m,j,k}} = \sum\limits_{p=0}^\infty \alpha_{m,j,k,p}e_{m,j,k,p}}.
$$
Since the series $\displaystyle{\sum\limits_{p\geq 0} \alpha_{m,j,k,p} e_{m,j,k,p}}$ 
converges in $L^2$ to $h|_{\gamma_{m,j,k}}$, then the convergence holds 
a fortiori in $L^1$. We derive that, for all $\xi \in E_{r}$,
\begin{align*}
h(\xi) 
&= \displaystyle{\sum\limits_{m=0}^{2}\sum\limits_{j=1}^N\sum\limits_{k=0}^\infty 
\frac{1}{2\pi i}\int_{\gamma_{m,j,k}}h(z)K(z,\xi) 
\frac{\dx z}{\prod\limits_{j=1}^N(\overline{\xi_{j}}z)}}\\
&= \displaystyle{\frac{1}{2\pi i} \sum\limits_{m,j,k,p} \alpha_{m,j_0,k,p}\int_{\gamma_{m,j,k}}e_{m,j_0,k,p}(z)K(z,\xi) \frac{\dx{z}}{\prod\limits_{j=1}^N(\overline{\xi_{j}}z)}}\\
&= \displaystyle{\sum\limits_{m,j,k,p} \alpha_{m,j,k,p} \Phi_{m,j,k,p}}.
\end{align*}
\end{proof}

In order to define a new quadratic calculus 
in the next section, we need a vectorial version of the decomposition obtained before. For any Banach space $Z$, we let $H^\infty(E_s)$ be the 
Banach space of all bounded holomorphic functions
$ : E_s \rightarrow Z$, equipped with the supremum norm 
$$
\norm{f}_{\infty,E_s}=\sup\bigl\{\norm{f(z)}_Z\, :\ z\in E_s\bigr\}.
$$

Let $h\in H^\infty(E_s;Z)$. As in Proposition \ref{Alpha}, we define
$$
\alpha_{m,j,k,p} = \int_{\gamma_{m,j,k}}
h(z){e_{m,j,k,p}(z)} \abs{\frac{\dx{z}}{\xi_{j}-z}}.
$$
for any $m\in\{1,2\}$, $j\in\{1,\ldots,N\}$,$k\geq 0$ and $p\geq 0$, 
and we define
$$
\alpha_{0,j,k,p} = \int_{\gamma_{0,j,k}}
h(z){e_{m,j,k,p}(z)} \abs{\dx{z}}.
$$
for any $j\in\{1,\ldots,N\}$,$k\in\{0,\ldots,M_j\}$ and $p\geq 0$.
Note that the $\alpha_{0,j,k,p}$ are well-defined
elements of $Z$. Arguing as in the
proof of Proposition \ref{Alpha}, we obtain 
that there exists a constant $C>0$ (not depending on $h$) such that
for all $m \in \{0,1,2\}$, $j = \{1,\ldots,N\}$, $k\geq 0$ and $p\geq 0$,
\begin{equation}\label{estimateAlphaVect}
\norm{\alpha_{m,j,k,p}} \lesssim \norm{h}_{\infty,E_{s}}
\end{equation}

Let $x^*\in Z^*$. By continuity of $x^*$, we have
$$
\langle{x^*,\alpha_{m,j,k,p}}\rangle = 
\int_{\gamma_{m,j,k}}\langle x^*, h(z) 
\rangle {e_{m,j,k,p}(z)} \abs{\frac{\dx{z}}{\xi_{j}-z}}
$$
If we apply Proposition \ref{Alpha} to the function
$\langle{x^*,h(.)}\rangle : E_s \mapsto \C$, we obtain that
$$
\langle x^*, h(z) \rangle = \sum\limits_{m,j,k,p}^\infty \langle{x^*,\alpha_{m,j,k,p}}\rangle \Phi_{m,j,k,p}(z)
$$
for all $z \in E_r$.
By the uniform estimate (\ref{estimateAlphaVect}) and Proposition
\ref{CV}, we can define for all $z \in E_r$, 
$$
\displaystyle{u(z) = \sum\limits_{m,j,k,p}^\infty \alpha_{m,j,k,p} \Phi_{m,j,k,p}(z)}
$$ 
Then, by continuity of $x^*$,
$$
\langle{x^*,u(z)}\rangle = \sum\limits_{m,j,k,p}^\infty \langle{x^*,\alpha_{m,j,k,p}}\rangle \Phi_{m,j,k,p}(z)= \langle{x^*,h(z)}\rangle
$$

This equality is valid for all $x^* \in Z^*$. Hence by the Hahn-Banach theorem, $u(z) = h(z)$ for all $z \in E_r$.

After proceeding with a re-indexation of $\{m,j,k,p\}$, we obtain
the following.

        
        
\begin{proposition}\label{vectorial}
Let $r \in (0,1)$, let $s \in (r,1)$ and let 
$Z$ be a Banach space. There exists a constant $C>0$ and a 
sequence $(\Phi_{i})_{i\geq 1}$ of $\HI_0(E_r)$ 
such that 
$$
\sup\limits_{z \in E_r} \sum\limits_{i=1}^\infty \abs{\Phi_i(z)}^{1/2} < \infty,
$$
and for any $h \in H^\infty(E_s,Z)$, there exists a bounded
sequence $(\alpha_i)_{i\geq1}$ of $Z$ 
such that 
$$
h(z) = \sum\limits_{i=1}^\infty \alpha_i \Phi_i(z)
\quad\hbox{for all}\ z\in E_r  \hspace{1cm} \hbox{and}\hspace{1cm}
\norm{\alpha_i} \leq C \norm{h}_{\infty,E_s}  \quad\hbox{for all}\ i\geq 0.
$$
\end{proposition}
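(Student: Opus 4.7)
The plan is to deduce the vectorial statement from the scalar case (Proposition \ref{Alpha}) by a Hahn-Banach argument. The family $(\Phi_i)_{i\geq 1}$ will just be a reindexation of the family $\{\Phi_{m,j,k,p}\}$ built in the previous subsection; each $\Phi_{m,j,k,p}$ lies in $H_0^\infty(E_r)$ by (\ref{HI0}), and the summability $\sup_{z \in E_r} \sum_i |\Phi_i(z)|^{1/2} < \infty$ is exactly Proposition \ref{CV}. So the first half of the statement comes for free from the scalar construction.

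Next, given $h \in H^\infty(E_s,Z)$, I define the coefficients $\alpha_{m,j,k,p} \in Z$ by the same Bochner integrals as in Proposition \ref{Alpha}, namely
\begin{equation*}
\alpha_{m,j,k,p} = \int_{\gamma_{m,j,k}} h(z)\, e_{m,j,k,p}(z)\, \left|\frac{\dx z}{\xi_j - z}\right|
\end{equation*}
for $m \in \{1,2\}$, and the analogous formula with $|\dx z|$ for $m = 0$. These $Z$-valued integrals are well-defined since $h$ is bounded and the basis elements $e_{m,j,k,p}$ are square-integrable against the relevant measures. Repeating the Cauchy-Schwarz computation of Proposition \ref{Alpha} inside $Z$, via the triangle inequality $\|\int f\|_Z \leq \int \|f\|_Z$, yields the uniform bound $\|\alpha_{m,j,k,p}\|_Z \lesssim \|h\|_{\infty,E_s}$ of (\ref{estimateAlphaVect}).

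The identity $h(z) = \sum_i \alpha_i \Phi_i(z)$ on $E_r$ is then obtained by duality. For any $x^* \in Z^*$, the function $\langle x^*, h(\cdot)\rangle$ belongs to $H^\infty(E_s)$, and by continuity of $x^*$ one can pass it through the Bochner integrals defining the $\alpha_{m,j,k,p}$; consequently its scalar Franks-McIntosh coefficients are exactly $\langle x^*, \alpha_{m,j,k,p}\rangle$. Applying Proposition \ref{Alpha} to this scalar function gives
\begin{equation*}
\langle x^*, h(z)\rangle = \sum_i \langle x^*, \alpha_i\rangle \Phi_i(z), \qquad z \in E_r.
\end{equation*}
On the other hand, combining the uniform bound on $\|\alpha_i\|$ with Proposition \ref{CV} (which, since $|\Phi_i(z)|^{1/2} \leq C$, also yields $\sum_i |\Phi_i(z)| \leq C \sum_i |\Phi_i(z)|^{1/2} \lesssim 1$ on $E_r$), the series $u(z) := \sum_i \alpha_i \Phi_i(z)$ converges absolutely in $Z$. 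Continuity of $x^*$ then gives $\langle x^*, u(z)\rangle = \langle x^*, h(z)\rangle$ for all $x^* \in Z^*$, and a Hahn-Banach argument concludes $u(z) = h(z)$ in $Z$.

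There is no real obstacle: every step of the scalar proof either works verbatim for $Z$-valued integrals (by the vectorial Cauchy formula and dominated convergence) or, where it uses a genuinely scalar feature such as orthogonality in the Hilbertian basis expansion, is bypassed by the duality reduction. The only technical point to watch is the absolute convergence of $\sum_i \alpha_i \Phi_i(z)$ in $Z$, which is why we first upgrade the $|\Phi_i|^{1/2}$-summability of Proposition \ref{CV} to $|\Phi_i|$-summability as above. A final reindexation of $\{m,j,k,p\}$ as a single integer subscript $i$ then puts the statement in the form required.
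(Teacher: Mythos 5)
Your proof follows exactly the same route as the paper: define the $Z$-valued coefficients by the same Bochner integrals, obtain the uniform bound by (vector-valued) Cauchy-Schwarz, apply the scalar Proposition \ref{Alpha} to $\langle x^*,h(\cdot)\rangle$ for each $x^*\in Z^*$, verify absolute convergence of the series via the $\sqrt{\cdot}$-summability from Proposition \ref{CV}, and conclude by Hahn–Banach. The only cosmetic difference is that you spell out more explicitly why $\sum_i\abs{\Phi_i(z)}^{1/2}\lesssim 1$ upgrades to $\sum_i\abs{\Phi_i(z)}\lesssim 1$; this is correct and slightly more careful than the paper's wording, but it is the same argument.
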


We look now for a factorization of these functions $\phi_{i}$. 

    \begin{lemma}\label{factorization}
        There exists two sequences $(\psi_{i})_{i\geq1}$ and $(\tilde{\psi}_i)_{i\geq1}$ of $H_0^\infty(E_r)$ such that for all $\xi \in E_r$ and $q$ as in (\ref{estimateq})
        \begin{equation}
            \Phi_{i}(\xi) = {\psi}_{i}(\xi) \tilde{\psi}_{i}(\xi)
        \end{equation}
        and
        \begin{equation*}
        \sup\limits_{z \in E_r}\sum\limits_{i = 1}^\infty\abs{\psi_{i}(z)} < \infty \qquad 
            \sup\limits_{z \in E_r}\sum\limits_{i = 1}^\infty\abs{\tilde{\psi}_{i}(z)} < \infty
    \end{equation*}
    \end{lemma}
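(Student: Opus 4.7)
The strategy is classical: combine an inner-outer factorization of each $\Phi_i$ with a suitable ``square root'' construction via outer functions, then deduce the summability assertions by the maximum principle.  Since $E_r$ is a simply connected Jordan domain bounded by a rectifiable curve (being the interior of a convex hull in $\C$), standard Hardy space theory, obtained directly or by conformal transfer to $\D$, ensures that every $f \in H^\infty(E_r)$ admits non-tangential boundary values $f^*$ almost everywhere on $\partial E_r$ and a factorization $f = I \cdot O$ with $I$ inner ($\abs{I^*} = 1$ a.e., $\abs{I} \leq 1$ in $E_r$) and $O$ outer ($\abs{O^*} = \abs{f^*}$ a.e.).

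Apply this to each $\Phi_i$, writing $\Phi_i = I_i O_i$.  Since $\log\abs{\Phi_i^*}^{1/2}$ is integrable on $\partial E_r$ (half the integral of $\log\abs{O_i^*}$, which converges by definition of an outer function), one can define $\tilde{\psi}_i$ as the outer function in $E_r$ whose boundary modulus equals $\abs{\Phi_i^*}^{1/2}$.  Set $\psi_i = \Phi_i/\tilde{\psi}_i = I_i \cdot (O_i/\tilde{\psi}_i)$.  The quotient $O_i/\tilde{\psi}_i$ is outer with boundary modulus $\abs{\Phi_i^*}^{1/2}$, so $\psi_i \in H^\infty(E_r)$, and one has the boundary identities $\abs{\psi_i^*} = \abs{\tilde{\psi}_i^*} = \abs{\Phi_i^*}^{1/2}$ a.e.  The factorization $\Phi_i = \psi_i\tilde{\psi}_i$ is automatic.

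To check that $\psi_i, \tilde{\psi}_i \in H_0^\infty(E_r)$, I exploit the decay estimate $\abs{\Phi_i(\lambda)} \leq c_i \prod_{j=1}^N \abs{\xi_j - \lambda}^{s_{j,i}}$ coming from $\Phi_i \in H_0^\infty(E_r)$.  Choosing single-valued branches in the simply connected domain $E_r$, define
$$
h_i(\lambda) = \prod_{j=1}^N (\xi_j - \lambda)^{s_{j,i}/2},
$$
a bounded holomorphic function on $E_r$ with $\abs{h_i(\lambda)} = \prod_j \abs{\xi_j - \lambda}^{s_{j,i}/2}$.  The holomorphic function $\psi_i/h_i$ then satisfies $\abs{(\psi_i/h_i)^*} \leq c_i^{1/2}$ a.e. on $\partial E_r$, hence $\abs{\psi_i(\lambda)} \leq c_i^{1/2} \abs{h_i(\lambda)}$ throughout $E_r$ by the maximum principle.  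This gives the $H_0^\infty$ membership with exponents $s_{j,i}/2$, and the same argument applies to $\tilde{\psi}_i$.

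Finally, for each $N \geq 1$, the function $\sum_{i=1}^N \abs{\psi_i}$ is subharmonic on $E_r$ as a finite sum of moduli of holomorphic functions.  Its a.e. boundary values are bounded above by $\sum_{i=1}^N \abs{\Phi_i^*}^{1/2} \leq C$, where $C$ is furnished by Proposition \ref{CV} (which extends to the boundary by Fatou's lemma applied to the non-tangential limits).  The maximum principle for subharmonic functions then yields $\sum_{i=1}^N \abs{\psi_i(z)} \leq C$ for all $z \in E_r$, and monotone convergence as $N \to \infty$ gives the announced uniform bound; the proof for $\tilde{\psi}_i$ is identical.  The main technical point is the preservation of the $H_0^\infty$ condition through the ``square root'' step, which is handled cleanly by the auxiliary function $h_i$ above; the corners of $E_r$ at the points of $E$ present no real obstacle, since $E_r$ is still a rectifiable Jordan domain on which the classical Nevanlinna factorization theory applies.
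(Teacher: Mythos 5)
Your proof is correct and follows essentially the same route as the paper's: inner-outer factorization of each $\Phi_i$ in the rectifiable Jordan domain $E_r$ to produce factors of equal boundary modulus $\abs{\Phi_i^*}^{1/2}$, followed by a maximum-principle argument transferring the boundary estimate from Proposition~\ref{CV} to the interior. You fill in several details that the paper leaves implicit --- the explicit construction of $\tilde{\psi}_i$ as the outer function with boundary modulus $\abs{\Phi_i^*}^{1/2}$ and $\psi_i$ as the quotient, the verification via the auxiliary outer function $h_i$ that both factors remain in $H_0^\infty(E_r)$, and the Poisson/subharmonicity and Fatou argument needed to pass from the a.e.\ boundary bound to the uniform interior bound --- so your write-up is in fact more complete than the paper's own terse proof.
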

	\begin{proof}
	    Since the domain $E_r$ is a simply connected domain bounded by a Jordan rectifiable curve, any function of $H^\infty(E_r)$ admits boundary values and one can apply the inner-outer factorization to all $\Phi_{i}$, which gives two sequences of holomorphic functions $(\psi_{i})_{i\geq 1}$ and $(\tilde{\psi}_{i})_{i\geq 1}$ in $H_0^\infty(E_r)$ such that
	    \begin{equation}
	        \Phi_{i}(\xi) = \psi_{i}(\xi) \tilde{\psi}_{i}(\xi) \text{ for all } \xi \in E_r 
	   \end{equation}
	   and
	   \begin{equation}
	       \abs{\psi_{i}(\xi)} = \abs{\tilde{\psi}_{i}(\xi)} = \abs{\Phi_{i}(\xi)}^{1/2} \text{ for all } \xi \in \partial{E_r}
	    \end{equation}
	\end{proof}
	
	Applying the maximum principle on the domain $\overline{E_r}$ to the functions $\Phi_{i}$, $\psi_{i}$ and $\tilde{\psi}_{i}$ (or more specifically to their prolongation on the whole closed unit disc), and using lemma \ref{estimationsPHI}, we have the existence of a constant $C >0$ such that for all $m \in \{0,1,2\}, j \in \{1,..,N\}, k \geq 0$ and $p \geq 1$
    \begin{equation}\label{psiFinite}
        \sum\limits_{i = 1}^\infty\abs{\psi_{i}(z)} \leq C \qquad 
            \sum\limits_{i = 1}^\infty\abs{\tilde{\psi}_{i}(z)} \leq C
    \end{equation}

    Finally, we can write, for all function $h \in H_0^\infty(E_s)$ and all $z \in E_r$ : 
    \begin{equation*}
        h(z) = \sum\limits_{i=1}^\infty \alpha_i \psi_i(z)\tilde{\psi}_i(z)
    \end{equation*}

\section{Quadratic functional calculus}


First, let's recall the definition of a bounded $\HI$ functional calculus for a Ritt$_E$ operator.

\begin{definition}\label{DEhinfty}
    Let $T$ be a Ritt$_E$ operator of type $r\in(0,1)$ and let $s\in(r,1)$
    We say that $T$ admits a bounded $H^\infty(E_s)$ functional calculus if there exists a constant $K\geq 1$
    such that 
    \begin{equation}\label{HFC}
        \|\phi(T)\| \leq K \|\phi\|_{\infty,E_s},
        \qquad\phi \in H_0^\infty(E_s).
    \end{equation}
\end{definition}

We need a stronger notion in order to connect the latter to square function estimates. 
This is the following, which is a generalization of quadratic functional calculus for Ritt operators (see \cite[Section 6]{LM1}).

\begin{definition}\label{HIquadratic}
Let $s\in(0,1)$ and let $T$ be a Ritt$_E$ operator of type $<s$ on $X$. We say 
that $T$ admits a quadratic $H^\infty(E_s)$ functional calculus if there 
exists a constant $K>0$ such that for any $n \geq 1$, for any $\phi_1\ldots, 
\phi_n$ in $H_0^\infty(E_s)$ and for any $x \in X$ :
\begin{equation}\label{quad}
\bignorm{\sum\limits_{l=1}^n \varepsilon_l \otimes \phi_l(T)x}_{\Rad{X}} \leq K\norm{x} \Bignorm{\left(\sum\limits_{l=1}^n \abs{\phi_l}^2\right)^{1/2}}_{\infty,E_s}
\end{equation}
\end{definition}

\begin{remark}   
If $T$ is a Ritt$_E$ operator with a quadratic $H^\infty(E_s)$
functional calculus, then it has a bounded $H^\infty(E_s)$ functional calculus 
as defined in Definition \ref{DEhinfty}. This follows by applying (\ref{quad}) with $n=1$. 
A converse holds if $X$ has a 
finite cotype. More specifically, we have the following adaptation of \cite[Theorem 6.3]{LM1}.
\end{remark}

\begin{theorem}\label{BDDquadratic}
Assume that $X$ has finite cotype and let $T$ be a Ritt$_E$ operator 
on $X$ with a bounded $\HI(E_r)$ functional calculus. Then $T$ admits 
a quadratic $\HI(E_s)$ functional calculus for any $s \in (r,1)$.
\end{theorem}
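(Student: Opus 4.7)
Plan of proof. The strategy is to adapt the Ritt-case argument of \cite[Theorem 6.3]{LM1} by combining the vector-valued Franks-McIntosh decomposition of Proposition \ref{vectorial}, the factorization of Lemma \ref{factorization}, a symmetrization trick with an auxiliary Rademacher sequence, and the Kaiser-Weis inequality of Lemma \ref{Kaiser}. First I would fix $s \in (r,1)$, a family $\phi_1,\ldots,\phi_n \in H_0^\infty(E_s)$ and $x \in X$, and set $M = \Bignorm{\bigl(\sum_{l=1}^n \abs{\phi_l}^2\bigr)^{1/2}}_{\infty, E_s}$. Regarding $h = (\phi_1,\ldots,\phi_n)$ as an element of $H^\infty(E_s;\ell^2_n)$ of norm $M$, Proposition \ref{vectorial} produces a sequence $(\Phi_i)_{i \geq 1} \subset H_0^\infty(E_r)$ and coefficients $(\alpha_{i,l})$ such that $\phi_l(z) = \sum_i \alpha_{i,l} \Phi_i(z)$ on $E_r$ and $\bigl(\sum_l \abs{\alpha_{i,l}}^2\bigr)^{1/2} \lesssim M$ uniformly in $i$. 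Factoring $\Phi_i = \psi_i \tilde\psi_i$ via Lemma \ref{factorization}, with $\sup_z \sum_i \abs{\psi_i(z)} \leq C$ and $\sup_z \sum_i \abs{\tilde\psi_i(z)} \leq C$, and substituting into the Dunford integral yields $\phi_l(T) x = \sum_i \alpha_{i,l} \psi_i(T) \tilde\psi_i(T) x$.

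The key symmetrization step comes next. I would introduce an independent Rademacher sequence $(\eps'_i)$ on a probability space $(\Omega',\P')$ and exploit the identity $\sum_i A_i B_i = \int_{\Omega'}\bigl(\sum_j \eps'_j(\omega') A_j\bigr)\bigl(\sum_i \eps'_i(\omega') B_i\bigr)\,d\P'(\omega')$, valid for any finite families of operators $A_j$ and vectors $B_i$ by orthogonality of the $\eps'_i$. Applying this pointwise in $\omega$ with $A_j = \psi_j(T)$ and $B_i = \alpha_{i,l}\tilde\psi_i(T)x$, then summing against $\eps_l(\omega)$, rewrites $\sum_l \eps_l(\omega) \phi_l(T) x$ as
$$
\int_{\Omega'}\Psi_{\omega'}(T)\Bigl(\sum_{i,l}\alpha_{i,l}\,\eps'_i(\omega')\eps_l(\omega)\,\tilde\psi_i(T)x\Bigr)\,d\P'(\omega'),
$$
where $\Psi_{\omega'} := \sum_j \eps'_j(\omega')\psi_j$. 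For each $\omega'$, $\Psi_{\omega'}$ lies (after truncation) in $H_0^\infty(E_r)$ with $\norm{\Psi_{\omega'}}_{\infty, E_r} \leq C$, so the bounded $\HI(E_r)$ calculus gives $\norm{\Psi_{\omega'}(T)} \leq KC$. Squaring, integrating over $\omega$ and applying Jensen then yields
$$
\Bignorm{\sum_l \eps_l \otimes \phi_l(T) x}_{\Rad{X}}^2 \,\leq\, (KC)^2 \Bignorm{\sum_{i,l} \alpha_{i,l}\,\eps'_i \otimes \eps_l \otimes \tilde\psi_i(T) x}_{\rara{X}}^2.
$$

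At this point I would invoke the Kaiser-Weis inequality (Lemma \ref{Kaiser}) with indices $k \leftrightarrow i$, $l \leftrightarrow l$ and vectors $x_i = \tilde\psi_i(T) x$, bounding the right-hand side by a constant times $\sup_i\bigl(\sum_l \abs{\alpha_{i,l}}^2\bigr)^{1/2} \cdot \bignorm{\sum_i \eps'_i \otimes \tilde\psi_i(T) x}_{\Rad{X}}$. The supremum is $\lesssim M$ by the first step. The remaining Rademacher norm is controlled by the same mechanism used for $\Psi_{\omega'}(T)$: for each $\omega'$ the function $\sum_i \eps'_i(\omega')\tilde\psi_i$ belongs to $H_0^\infty(E_r)$ with sup norm $\leq C$, so the bounded calculus gives $\bignorm{\sum_i \eps'_i(\omega')\tilde\psi_i(T) x}_X \leq KC\norm{x}$ uniformly in $\omega'$, which transfers to an $L^2(\Omega';X)$ bound and hence to a $\Rad{X}$ bound. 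Assembling the estimates yields (\ref{quad}) and finishes the proof.

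I expect the main technical obstacle to be the rigorous handling of the infinite sums $\sum_i \alpha_{i,l}\Phi_i$ and $\sum_i \eps'_i(\omega')\psi_i$ and the attendant interchanges of sums, integrals and operator applications. In particular, producing $\Psi_{\omega'}(T)$ as a genuine bounded operator with the expected norm requires an approximation argument through finite truncations, exploiting the uniform $H^\infty(E_r)$ bound to pass to the limit. Beyond this bookkeeping, everything else is a combination of ingredients already established in sections \ref{FMI} and the preliminaries; the finite cotype hypothesis on $X$ is used only through Lemma \ref{Kaiser}.
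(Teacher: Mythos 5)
Your proposal follows essentially the same route as the paper's proof: the Franks--McIntosh decomposition of Proposition \ref{vectorial} with the factorization of Lemma \ref{factorization}, the symmetrization via an auxiliary Rademacher sequence combined with the bounded $\HI(E_r)$ calculus to control $\bigl\Vert\sum_i \eps_i(\omega')\psi_i(T)\bigr\Vert$ and $\bigl\Vert\sum_i \eps_i(\omega')\tilde{\psi}_i(T)x\bigr\Vert$, the Kaiser--Weis inequality of Lemma \ref{Kaiser} (the only point where finite cotype enters), and finally an approximation step to come back from truncations to the $\phi_l$ themselves. The one part you should make sharper if you write this up fully: $\Psi_{\omega'} = \sum_j \eps'_j(\omega')\psi_j$ need not belong to $H_0^\infty(E_r)$ (the sum need not vanish at $E$), so $\Psi_{\omega'}(T)$ cannot be defined directly by the primary Dunford calculus; the paper sidesteps this by never forming $\Psi_{\omega'}$ at all, but instead proving the uniform finite-sum bound $\sup_m\sup_{\eta_i=\pm1}\bigl\Vert\sum_{i=1}^m\eta_i\psi_i(T)\bigr\Vert<\infty$ and then carrying out the whole argument with the truncated functions $h_{m,l}=\sum_{i=1}^m\alpha_{i,l}\psi_i\tilde\psi_i$, recovering $\phi_l(T)$ at the very end via dominated convergence for $h_{m,l}(\rho T)\to\phi_l(\rho T)$ and Lemma \ref{phirhoT} for $\rho\to1$. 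So the "bookkeeping" you flag is genuinely where the work lies, and the cleanest fix is the paper's device: never take $m\to\infty$ inside the functional calculus, only at the level of scalar estimates.
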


\begin{proof}
We use here the adaptation of the Franks-McIntosh decomposition presented in Section \ref{FMI}.
By assumption, $T$ admits a bounded $\HI(E_r)$ functional calculus, hence has type $<r$.
Thus $r$ is $E$-large enough. Let $s \in (r,1)$.
Then by Proposition \ref{vectorial} and Lemma \ref{factorization}, 
there exist a constant $C>0$ and two sequences $(\psi_i)_{i\geq1}$ and 
$(\tilde{\psi}_i)_{i\geq1}$ in $H_0^\infty(E_r)$ such that the following properties 
hold.
        
\smallskip
- For any $z \in E_r$ : 
        \begin{equation}\label{bddsumpsi}
            \sum\limits_{i\geq 1}\abs{\psi_i(z)} \leq C \qquad 
            \hbox{and}\qquad \sum\limits_{i\geq 1}\abs{\tilde{\psi}_i(z)} \leq C.
        \end{equation}

        \smallskip
- For any Banach space $Z$ and any function $\Phi \in H_0^\infty(E_s;Z)$, there exists a bounded sequence $(\theta_i)_{i\geq 1}$ of $Z$
such that :
        \begin{equation*}
            \norm{\theta_i} \leq C \norm{\Phi}_{\HI(E_s;Z)} , \qquad i\geq 1,
        \end{equation*}
        and
        \begin{equation}\label{bi}
            \Phi(z) = \sum\limits_{i=1}^\infty \theta_i \psi_i(z)\tilde{\psi}_i(z) ,  \qquad z \in E_r.
        \end{equation}
        
We assume that the operator $T$ admits a bounded $\HI(E_r)$ functional calculus. This implies that for any sequence $(\eta_i)_{i\geq1}$ of $\{-1,1\}$ and any $m\geq 1$, we have 
        $$
            \Bignorm{\sum\limits_{i=1}^m \eta_i \psi_i(T)} \lesssim \sup\limits_{z \in E_r} \abs{\sum\limits_{i=1}^m
            \eta_i \psi_i(z)}\,\leq \sup\limits_{z \in E_r} \sum\limits_{i = 1}^m \abs{\psi_i(z)}.
        $$
        We derive by (\ref{bddsumpsi}) that  
       $$ 
            \sup\limits_{m\geq 1} \sup\limits_{\eta_i = \pm 1} \Bignorm{\sum\limits_{i = 1}^m \eta_i \psi_i(T)} < \infty.
        $$
In turn, this implies  an estimate
\begin{equation}\label{supsup}
\Bignorm{\sum_{i=1}^m 
\varepsilon_i\otimes \psi_i(T)x}_{{\rm Rad}(X)}
\lesssim\norm{x},\qquad x\in X,\, m\geq 1.
\end{equation}
The same arguments yield
\begin{equation}\label{supsup2}
\Bignorm{\sum_{i=1}^m \varepsilon_i\otimes \tilde{\psi_i}
(T)x}_{{\rm Rad}(X)}
\lesssim\norm{x},\qquad x\in X,\, m\geq 1.
\end{equation}

We apply the decomposition property stated above
with $Z = l_n^2$, for some $n \geq 1$. Let $\phi_1,...,\phi_n$ be 
elements of $\HI_0(E_s)$ and consider the element $\Phi \in \HI(E_s;l_n^2)$
defined for all $z \in E_s$ by 
$$
\Phi(z) = (\phi_1(z), ... ,\phi_n(z))
$$
If we write $\theta_i = (\alpha_{i,1}, \alpha_{i,2}, ... , \alpha_{i,n})$ for any $i\geq 1$, where the sequence $(\theta_i)_{i\geq 1}$ is the bounded sequence of $Z = l_n^2$ given by (\ref{bi}), then for any $z \in E_r$ and $l \in \{1,..,n\}$, we have
$$
\phi_{l}(z) = \sum\limits_{i=1}^\infty \alpha_{i,l} \psi_i(z) \tilde{\psi_i}(z)
$$
and 
\begin{equation}\label{supalpha}
\sup\limits_{i} \left( \sum\limits_{l} \abs{\alpha_{i,l}}^2 
\right)^{1/2} \leq C \biggnorm{\left(\sum\limits_{l=1}^n \abs{\phi_l}^2\right)^{1/2}}_{\infty,E_s}.
\end{equation}
For any $l \in \{1,..,n\}$ and any integer $m \geq 1$, we consider the function
$$
h_{m,l} = \sum\limits_{i=1}^m \alpha_{i,l} \psi_i \tilde{\psi_i}.
$$
Mimicking the argument in the proof of \cite[Theorem 6.3]{LM1} and using  (\ref{supsup}), we obtain an estimate
        $$
        \Bignorm{\sum\limits_l \eps_l \otimes h_{m,l}(T)x}_{\Rad{X}} \lesssim \Bignorm{\sum\limits_{i,l}\alpha_{i,l} \,\eps_i \otimes \eps_l \otimes \tilde{\psi}_i(T)x}_{\rara{X}},\quad x\in X.
        $$
        We assumed that $X$ has finite cotype. According to Lemma \ref{Kaiser}, we then have an estimate
        $$
        \Bignorm{\sum\limits_{i,l}\alpha_{i,l} \,\eps_i \otimes \eps_l \otimes \tilde{\psi}_i(T)x}_{\rara{X}}\lesssim
        \sup\limits_{i} \left( \sum\limits_{l} \abs{\alpha_{i,l}}^2 \right)^{1/2}
        \Bignorm{\sum\limits_i \eps_i \otimes \tilde{\psi}_i(T)x}_{\Rad{X}},
        \quad x\in X.
$$
Applying (\ref{supsup2}) and (\ref{supalpha}), this implies
$$
\Bignorm{\sum\limits_{i,l}\alpha_{i,l} \,\eps_i \otimes \eps_l \otimes \tilde{\psi}_i(T)x}_{\rara{X}}\lesssim
\norm{x} \biggnorm{\left(\sum_{l=1}^n \abs{\phi_l}^2 \right)^{1/2}}_{\infty, E_s},
\quad x\in X.
$$
We finally obtain 
        $$
        \Bignorm{\sum\limits_l \eps_l \otimes h_{m,l}(T)x}_{\Rad{X}} \lesssim \norm{x} \biggnorm{\left( \sum_{l=1}^n \abs{\phi_l}^2 \right)^{1/2}}_{\infty, E_s'}.
        $$

We now use
an approximation process to get the above estimate
with $\phi_l$ instead of $h_{m,l}$. 
Consider $\rho \in (0,1)$, 
and let $\Gamma_\rho$ be any contour 
of $\sigma(T_\rho)$ included in $E_r$. Then we can write 
$$
h_{m,l}(\rho T) =\,\frac 1 {2\pi i} \int_{\Gamma_\rho} h_{m,l}(z) R(z,\rho T) \dx z
$$
and
$$
\phi_{l}(\rho T) =\,\frac 1 {2\pi i} \int_{\Gamma_\rho} \phi_{l}(z) R(z, \rho T) \dx z.
$$
Since $(h_{m,l})_{m\geq1}$ tends to $\phi_l$ pointwise on $E_r$
and $\sup\limits_{m,z\in E_r} \abs{h_{m,l}(z)} < \infty$, then
we deduce (by Lebesgue's dominated convergence theorem) that
$$
\lim\limits_{m \rightarrow \infty} h_{m,l}(\rho T) =\phi_l(\rho T).
$$
Furthermore we have 
$$
\lim\limits_{\rho \rightarrow 1} \phi_l(\rho T) = \phi_l(T),
$$
by Lemma \ref{phirhoT}. This yields the expected result.
\end{proof}
        
        
        
        

\section{Square functions}

    The aim of this section is to compare the boundedness of the square functions $\norm{\,\cdotp}_{T,\alpha}$
    given by Definition \ref{SF} with the
    boundedness of $H^\infty$ functional 
    calculus from
    Definition \ref{DEhinfty}, for any Ritt$_E$ operator.

We start with a numerical estimate.
    
    \begin{lemma}\label{ck}
        Let $(c_k)_{k\geq0}$ be the sequence of complex numbers such that : 
        \begin{equation}\label{ckeq}
            \displaystyle{\frac 1 {\prod\limits_{j=1}^N (1-\overline{\xi_j}z)^3}} = \sum\limits_{k=0}^\infty c_k z^k,\qquad z\in\D.
        \end{equation}
        Then : 
        $$
            \abs{c_k} \lesssim k^2
        $$
    \end{lemma}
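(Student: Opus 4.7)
The plan is to use a partial fraction decomposition, which directly exposes the polynomial growth of the coefficients.

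Since the points $\xi_1, \ldots, \xi_N$ are distinct, the points $\overline{\xi_j}^{-1} = \xi_j$ are distinct poles (each of order $3$) of the rational function $z \mapsto \prod_{j=1}^N (1-\overline{\xi_j}z)^{-3}$. Thus I can decompose
\begin{equation*}
\frac{1}{\prod_{j=1}^N (1-\overline{\xi_j}z)^3} \,=\, \sum_{j=1}^N \sum_{\ell=1}^{3} \frac{A_{j,\ell}}{(1-\overline{\xi_j}z)^\ell}
\end{equation*}
for suitable constants $A_{j,\ell}\in\mathbb{C}$ depending only on $E$.

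Next, I apply the standard binomial series expansion
\begin{equation*}
\frac{1}{(1-\overline{\xi_j}z)^\ell} \,=\, \sum_{k=0}^\infty \binom{k+\ell-1}{\ell-1}\,\overline{\xi_j}^{\,k}\, z^k,\qquad z\in\mathbb{D},
\end{equation*}
valid for each $j\in\{1,\ldots,N\}$ and $\ell\in\{1,2,3\}$. Collecting powers of $z$, the uniqueness of the Taylor coefficients gives
\begin{equation*}
c_k \,=\, \sum_{j=1}^N \sum_{\ell=1}^{3} A_{j,\ell}\binom{k+\ell-1}{\ell-1}\overline{\xi_j}^{\,k}.
\end{equation*}

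Finally, since $|\xi_j| = 1$ and the $A_{j,\ell}$ are fixed constants, I just bound
\begin{equation*}
\abs{c_k} \,\leq\, \sum_{j=1}^N \sum_{\ell=1}^{3} \abs{A_{j,\ell}}\binom{k+\ell-1}{\ell-1},
\end{equation*}
and observe that the dominant binomial coefficient is $\binom{k+2}{2}=\tfrac{(k+1)(k+2)}{2}\lesssim k^2$. The conclusion $\abs{c_k}\lesssim k^2$ follows. There is no real obstacle here; the only point to notice is that the order $3$ in the denominator is exactly what produces the $k^{3-1} = k^2$ growth of the coefficients, and any generalization to exponent $m$ in place of $3$ would give $|c_k|\lesssim k^{m-1}$ by the same argument.
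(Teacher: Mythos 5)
Your proof is correct and follows essentially the same route as the paper: a partial fraction decomposition into terms $(1-\overline{\xi_j}z)^{-\ell}$ for $\ell=1,2,3$, followed by the binomial series expansion and a direct bound on the coefficients. The only cosmetic difference is that you write the coefficients as $\binom{k+\ell-1}{\ell-1}$ whereas the paper spells out the three cases $d_{1,k}=1$, $d_{2,k}=k+1$, $d_{3,k}=(k+1)(k+2)/2$; the content is identical.
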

    
    \begin{proof}
        According to the decomposition into simple elements, one can find complex numbers $\alpha_{p,j}$ with $p \in \{1,2,3\}$ and $j \in \{1,..,N\}$ such that, for all $z \in \D$ :
        \begin{equation}
            \displaystyle{\frac 1 {\prod\limits_{j=1}^N (1-\overline{\xi_j}z)^3} = \sum\limits_{j = 1}^N \sum\limits_{p=1}^3  \frac {\alpha_{p,j}}{(1-\overline{\xi_j}z)^p}}
        \end{equation}
        Using the following identities, available for any $z \in \D$, 
        $$
        \displaystyle{\frac{1}{1-\overline{\xi_j}z} = \sum\limits_{k=0}^\infty \overline{\xi_j}^kz^k }, 
        \qquad
            \displaystyle{\frac{1}{(1-\overline{\xi_j}z)^2} = \sum\limits_{k=0}^\infty (k+1)\overline{\xi_j}^kz^k },
            $$
            and
            $$
            \displaystyle{\frac{1}{(1-\overline{\xi_j}z)^3} = \sum\limits_{k=0}^\infty \frac{(k+1)(k+2)}2\overline{\xi_j}^kz^k },
        $$
        we can write 
        \begin{equation}\label{dev}
            \displaystyle{\frac 1 {\prod\limits_{j=1}^N (1-\overline{\xi_j}z)^3} = \sum\limits_{j = 1}^N \sum\limits_{p=1}^3  \sum\limits_{k=0}^\infty {\alpha_{p,j}} d_{p,k} \overline{\xi_j}^k z^k },
        \end{equation}
        where $d_{1,k} = 1$, $d_{2,k} = k+1$ and $d_{3,k} = \displaystyle{\frac {(k+1)(k+2)} 2}$, for every $k\geq 0$. Note that $d_{p,k} \lesssim k^2$, for all $p \in \{1,2,3\}$.

 We re-write (\ref{dev}) as
        \begin{equation}
            \displaystyle{\frac 1 {\prod\limits_{j=1}^N (1-\overline{\xi_j}z)^3} = \sum\limits_{k = 0}^\infty \sum\limits_{j=1}^N  \sum\limits_{p=1}^3 {\alpha_{p,j}} d_{p,k} \overline{\xi_j}^k z^k },
        \end{equation}
        from which we deduce that for all $k \geq 0$, 
        $$
        c_k = \sum\limits_{j=1}^N  \sum\limits_{p=1}^3 {\alpha_{p,j}} d_{p,k} \overline{\xi_j}^k
        $$
        Then the expected result follows from the
        above estimates on the coefficients $d_{p,k}$.
    \end{proof}

\begin{lemma}\label{RBdd}
Let $r \in (0,1)$ and let $T : X \rightarrow X$ be a Ritt$_E$ operator of type $r$.
For any $s \in (r,1)$, there exists a constant $K>0$ such that for any polynomial $\phi$ 
and any $k\geq 1$ : 
\begin{equation*}
k\bignorm{\phi(T)T^{k-1}\prod\limits_{j=1}^N(I-\overline{\xi_j}T)} \leq K \norm{\phi}_{\infty,E_s}.
\end{equation*}
If $T$ is $\mathcal {R}$-Ritt$_E$ of type $r$,
then for $s \in  (r,1)$, there exists $K>0$ such 
that for any polynomial $\phi$, the set
$\bigl\{k\phi(T)T^{k-1}\prod\limits_{j=1}^N(I-\overline{\xi_j}T)\, :\, k\geq 1\bigr\}$ is 
${\mathcal R}-$bounded, with
\begin{equation*}
\mathcal{R}\Bigl(\bigl\{k\phi(T)T^{k-1}\prod\limits_{j=1}^N(I-\overline{\xi_j}T)
\, :\, k\geq 1\bigr\}
\Bigr) \leq K\norm{\phi}_{\infty,E_s}.
\end{equation*}
\end{lemma}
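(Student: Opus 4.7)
The plan is to generalise the proof of Proposition~\ref{Rbddalpha} (in the case $\alpha=1$) by inserting the scalar weight $\phi$ into the integral representation. Since $\phi$ is a polynomial, the function $p(z) = k\phi(z)z^{k-1}\prod_{j=1}^N(1-\overline{\xi_j}z)$ is itself a polynomial, and since $\sigma(T)\subset\overline{E_r}\subset E_s$, the Dunford--Riesz functional calculus applied along $\partial E_s$ yields the key identity
$$
k\phi(T)T^{k-1}\prod_{j=1}^N(I-\overline{\xi_j}T) = \frac{1}{2\pi i}\int_{\partial E_s} k\phi(\lambda)\lambda^{k-1}\Bigl[\prod_{j=1}^N(1-\overline{\xi_j}\lambda)R(\lambda,T)\Bigr]\,\dx\lambda.
$$

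For the boundedness statement, I would use the set
$$
F = \Bigl\{\prod_{j=1}^N(1-\overline{\xi_j}\lambda)R(\lambda,T)\, :\, \lambda\in\partial E_s\setminus E\Bigr\},
$$
which is bounded since $T$ is Ritt$_E$ of type $r<s$. The estimate then reduces to the uniform boundedness of $k\int_{\partial E_s}|\lambda|^{k-1}|\dx\lambda|$ in $k$. This is essentially the case $\alpha=1$ of the computation already performed in the proof of Proposition~\ref{Rbddalpha}: there the integrals $I_n = n\int_{\partial E_s}|\lambda|^n|\dx\lambda|$ are shown to be uniformly bounded, and since $|\lambda|\geq s$ on $\partial E_s$ one has $|\lambda|^{k-1}\leq s^{-1}|\lambda|^k$, so that $k\int|\lambda|^{k-1}|\dx\lambda|\leq s^{-1}I_k$. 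Combining yields $\norm{k\phi(T)T^{k-1}\prod_j(I-\overline{\xi_j}T)}\leq K\norm{\phi}_{\infty,E_s}$ with $K$ independent of $\phi$ and $k$.

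For the $\mathcal{R}$-boundedness case, the argument proceeds along the very same lines as in the proof of Proposition~\ref{Rbddalpha}. Under the $\mathcal{R}$-Ritt$_E$ assumption, the set $F$ is $\mathcal{R}$-bounded, and the scalar weight $\lambda\mapsto k\phi(\lambda)\lambda^{k-1}$ has $L^1(\partial E_s,|\dx\lambda|)$-norm bounded by $C_0\norm{\phi}_{\infty,E_s}$ with $C_0$ independent of both $\phi$ and $k$. By (the proof of) \cite[Theorem 8.5.2]{HVVW}, each operator $k\phi(T)T^{k-1}\prod_j(I-\overline{\xi_j}T)$ lies in the strong closure of the absolute convex hull of $2C_0\norm{\phi}_{\infty,E_s} F$, and \cite[Propositions 8.1.21 and 8.1.22]{HVVW} then give
$$
\mathcal{R}\Bigl(\bigl\{k\phi(T)T^{k-1}\prod_{j=1}^N(I-\overline{\xi_j}T) : k\geq 1\bigr\}\Bigr) \leq 2C_0\,\mathcal{R}(F)\,\norm{\phi}_{\infty,E_s}.
$$

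No serious obstacle is expected: the proof is a direct rewriting of that of Proposition~\ref{Rbddalpha}, in which the extra scalar factor $\phi(\lambda)$ is carried along as a uniformly bounded multiplier on the contour. The only delicate point is ensuring that the $L^1$-bound on the scalar weight scales linearly in $\norm{\phi}_{\infty,E_s}$, so that the $\mathcal{R}$-bound inherited from \cite[Theorem 8.5.2]{HVVW} also scales linearly, giving the correct dependence in the final estimate.
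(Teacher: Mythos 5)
Your proposal is correct and follows essentially the same route as the paper: represent $k\phi(T)T^{k-1}\prod_j(I-\overline{\xi_j}T)$ by the Dunford–Riesz integral over $\partial E_s$, isolate the ($\mathcal R$-)bounded family $\prod_j(1-\overline{\xi_j}\lambda)R(\lambda,T)$, bound the scalar weight by $\norm{\phi}_{\infty,E_s}$ times $k\abs\lambda^{k-1}$, and invoke \cite[Theorem 8.5.2]{HVVW} (with \cite[Propositions 8.1.21 and 8.1.22]{HVVW}) together with the uniform $L^1$-bound already established in the proof of Proposition~\ref{Rbddalpha}. The only point you spell out more explicitly than the paper is the reduction of $k\int_{\partial E_s}\abs\lambda^{k-1}\abs{\dx\lambda}$ to the $\alpha=1$ case of the integrals $I_n$ there, via $\abs\lambda\geq s$ on $\partial E_s$; this is a correct and legitimate observation (indeed $D(0,s)\subset E_s$), and the paper simply cites that proof directly without the rescaling.
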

    
\begin{proof}
We focus only on the $\mathcal{R}$-Ritt$_E$ case, the proof 
of the Ritt$_E$ case being similar and simpler. 
Fix any $s'\in(s,1)$. For any $k\geq 1$, the polynomial function
$\lambda\mapsto \phi(\lambda)\lambda^{k-1}\prod\limits_{j=1}^N(I-\overline{\xi_j}\lambda)$ 
is an element of $H^\infty_0(E_{s'})$, hence we may write
\begin{equation*}
k\phi(T)T^{k-1}\prod\limits_{j=1}^N(I-\overline{\xi_j}T) = \frac 1 {2\pi i} \int_{\partial E_s} k \phi(\lambda)\lambda^{k-1}\prod\limits_{j=1}^N(I-\overline{\xi_j}\lambda)R(\lambda,T) \dx{\lambda}.
\end{equation*}
Since $T$ is $\mathcal{R}$-Ritt$_E$ of type $r$, it follows from
\cite[Theorem 8.5.2]{HVVW} that there exists a constant $C>0$ such that 
\begin{align*}
\mathcal{R}\Bigl(\bigl\{k\phi(T)T^{k-1}\prod\limits_{j=1}^N(I-\overline{\xi_j}T)
\, :\, k\geq 1\bigr\}
\Bigr) &\leq C \sup\limits_{k\geq 1}\left\{ k\int_{\partial E_s}  \abs{\phi(\lambda)}\abs{\lambda}^{k-1}\abs{\dx{\lambda}}\right\}\\
&\leq C \norm{\phi}_{\infty,E_s} \sup\limits_{k\geq 1}\left\{ k\int_{\partial E_s}  \abs{\lambda}^{k-1}\abs{\dx{\lambda}}\right\}.\\
\end{align*}
The latter quantity is finite, according to the proof of 
Proposition \ref{Rbddalpha}.
\end{proof}

\begin{proposition}\label{quadraticSFE}
Let $T : X \rightarrow X$ be a Ritt$_E$ operator of type $r \in (0,1)$, and assume that $T$
admits a quadratic $\HI(E_s)$ functional calculus for some $s \in (r,1)$. Then for any $\alpha>0$, we have an estimate
$$
\norm{x}_{T,\alpha} \lesssim \norm{x},\qquad x\in X.
$$
\end{proposition}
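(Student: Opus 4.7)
The plan is to apply the quadratic $H^\infty(E_s)$ functional calculus directly to the family of functions that encode the square function $\norm{x}_{T,\alpha}$. For each integer $k\geq 1$, I would define
$$
\phi_k(\lambda) := k^{\alpha - 1/2}\,\lambda^{k-1} \prod_{j=1}^N (1-\overline{\xi_j}\lambda)^\alpha, \qquad \lambda \in E_s,
$$
using the principal branch of each fractional power, which is holomorphic on $E_s$ since $1-\overline{\xi_j}\lambda$ avoids the negative real axis there. The pointwise bound $\abs{\phi_k(\lambda)} \leq k^{\alpha - 1/2}\prod_j\abs{\xi_j-\lambda}^\alpha$ shows that $\phi_k \in H_0^\infty(E_s)$. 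Compatibility of the $H^\infty(E_s)$ calculus with the sectorial fractional-power calculus for each operator $I-\overline{\xi_j}T$ then gives the identification
$$
\phi_k(T) = k^{\alpha-1/2}\,T^{k-1}\prod_{j=1}^N (I-\overline{\xi_j}T)^\alpha.
$$
Applying the quadratic estimate (\ref{quad}) to $(\phi_1,\ldots,\phi_n)$ yields
$$
\Bignorm{\sum_{k=1}^n k^{\alpha-1/2}\,\varepsilon_k \otimes T^{k-1}\prod_{j=1}^N(I-\overline{\xi_j}T)^\alpha x}_{\Rad{X}} \leq K\norm{x}\,\Bignorm{\Bigl(\sum_{k=1}^n\abs{\phi_k}^2\Bigr)^{1/2}}_{\infty,E_s},
$$
so letting $n\to\infty$, the result reduces to a uniform bound on the right-hand side sup-norms.

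The core of the proof is thus the geometric estimate
$$
C_\alpha := \sup_{\lambda\in E_s}\,\sum_{k=1}^\infty k^{2\alpha-1}\abs{\lambda}^{2(k-1)}\prod_{j=1}^N \abs{1-\overline{\xi_j}\lambda}^{2\alpha} \,<\,\infty.
$$
The classical asymptotic $\sum_{k\geq 1} k^{2\alpha-1} r^{2(k-1)} \lesssim (1-r)^{-2\alpha}$ for $r\in[0,1)$ reduces the task to establishing
$$
\prod_{j=1}^N \abs{1-\overline{\xi_j}\lambda}^{2\alpha} \,\lesssim\, (1-\abs{\lambda})^{2\alpha},\qquad \lambda\in E_s.
$$
I would verify this by splitting $E_s$ into a ``bulk'' region $\{\abs{\lambda}\leq s'\}$ for some $s'<1$, where $1-\abs{\lambda}$ is bounded below and both sides are comparable to constants, and the $N$ ``corner'' regions attached to each vertex $\xi_{j_0}$. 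In each corner region, the Stolz geometry of $E_s$ yields $\abs{1-\overline{\xi_{j_0}}\lambda}\lesssim 1-\abs{\lambda}$, while the remaining factors $\abs{1-\overline{\xi_j}\lambda}$ for $j\neq j_0$ are uniformly bounded above, from which the product estimate follows.

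I expect the main subtle point to be the identification $\phi_k(T) = k^{\alpha-1/2} T^{k-1}\prod_j(I-\overline{\xi_j}T)^\alpha$ when $\alpha$ is not a positive integer, since the right-hand side is defined a priori via sectorial fractional powers of the operators $I-\overline{\xi_j}T$, whereas the left-hand side comes from the Ritt$_E$ Dunford calculus on $E_s$. Making this rigorous amounts to invoking the compatibility, established in \cite{BLM}, between the $H^\infty(E_s)$ functional calculus applied to $(1-\overline{\xi_j}\lambda)^\alpha$ and the standard sectorial fractional-power construction. Once this compatibility is recorded, the proof concludes by combining the quadratic calculus estimate with the geometric bound for $C_\alpha$ and passing to the limit, yielding $\norm{x}_{T,\alpha} \leq K\sqrt{C_\alpha}\,\norm{x}$.
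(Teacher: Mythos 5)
Your proposal is correct and takes essentially the same route as the paper: define $\phi_k(\lambda)=k^{\alpha-1/2}\lambda^{k-1}\prod_j(1-\overline{\xi_j}\lambda)^\alpha$, apply the quadratic estimate, and reduce to verifying $\sup_{\lambda\in E_s}\sum_k\abs{\phi_k(\lambda)}^2<\infty$ by combining the elementary bound $\sum_k k^{2\alpha-1}r^{k-1}\lesssim(1-r)^{-2\alpha}$ with a bulk/corner decomposition of $E_s$ and the Stolz-domain estimate $\abs{1-\overline{\xi_{j_0}}\lambda}\lesssim 1-\abs{\lambda}$ near each vertex. The one point you flag (compatibility of the Dunford calculus with sectorial fractional powers so that $\phi_k(T)=k^{\alpha-1/2}T^{k-1}\prod_j(I-\overline{\xi_j}T)^\alpha$) is used implicitly in the paper as well, so your remark is a legitimate observation rather than a gap in your argument.
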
    

\begin{proof}
We fix $\alpha>0$.
For any $k\geq 1$, let us  define the function
$\phi_k : \D \rightarrow \C$ by
$$
\phi_k(z) = k^{\alpha - 1/2} z^{k-1}
\prod\limits_{j=1}^N(1-\overline{\xi_j}z)^\alpha,
\qquad z\in\D.
$$
By definition \ref{SF}, we may write
$$
\norm{x}_{T,\alpha}=\lim_n\Bignorm{\sum_{k=1}^n
\varepsilon_k\otimes \phi_k(T)x}_{{\rm Rad}(X)} \qquad x\in X.
$$
Hence it suffices to prove a uniform bound of the sums 
$$
\sum\limits_{k=1}^\infty \abs{\phi_k(z)}^2,
$$ 
for $z \in E_s$.
To obtain such a bound, we adapt an argument from
\cite{A} to the Ritt$_E$ case.

Let $x \in [0,1)$. 
We define on $(0,\infty)$ the differentiable 
function $f : t \mapsto t^{2\alpha-1}x^{t-1}$, 
whose derivative is 
$f' : t \mapsto t^{2\alpha-2}x^{t-1}[2\alpha-1 +t\log(x)]$.

If $0<\alpha\leq\frac12$, this derivative is non-positive 
hence $f$ is non-decreasing on $(0,\infty)$. If $\alpha>\frac12$, $f'$
vanishes at $\cfrac{2\alpha - 1}{-\log(x)}$ and we obtain that 
$f$ is non-decreasing on $\left(0,\cfrac{2\alpha-1}{-\log(x)}\right)$ and non-increasing on $\left(\cfrac{2\alpha-1}{-\log(x)},\infty\right)$.

We may therefore apply a comparison criteria to $f$, which
yields the following estimate (not depending on $x$),
$$
\sum\limits_{k=1}^\infty k^{2\alpha-1}x^{k-1} 
\lesssim \int_0^\infty t^{2\alpha-1}x^{t-1}\dx{t}.
$$
Using the changing variable $u = -\log(x)t$ in this estimate, we obtain another one
\begin{equation}\label{estintegral}
\sum\limits_{k=1}^\infty k^{2\alpha-1}x^{k-1} \lesssim \cfrac{\Gamma(2\alpha)}{x(-\log(x))^{2\alpha}}
\end{equation}
where $\Gamma$ is the usual 
function, 
$y\mapsto \displaystyle{\int_0^\infty u^{y-1}e^{-u} \dx u}$.

Now let $z \in E_s$. By (\ref{estintegral}), we have
\begin{align*}
\sum\limits_{k=1}^\infty \abs{\phi_k(z)}^2 &= \prod\limits_{j=1}^N \abs{1-\overline{\xi_j}z}^{2\alpha} \sum\limits_{k=1}^N k^{2\alpha - 1}(\abs{z}^2)^{k-1}\\
&\lesssim \prod\limits_{j=1}^N \abs{1-\overline{\xi_j}z}^{2\alpha} \cfrac {\Gamma(2\alpha)}{\abs{z}^2 (-\log(\abs{z}^2))^{2\alpha}}\\
&= \prod\limits_{j=1}^N \abs{1-\overline{\xi_j}z}^{2\alpha} \cfrac {\Gamma(2\alpha)}{2^{2\alpha}\abs{z}^2 (-\log(\abs{z}))^{2\alpha}}\\
&\lesssim \prod\limits_{j=1}^N \abs{1-\overline{\xi_j}z}^{2\alpha} \cfrac {\Gamma(2\alpha)}{2^{2\alpha}\abs{z}^2 (1-\abs{z})^{2\alpha}}.
\end{align*}
Fix $\delta >0$ and write $V_{\delta,j} := E_s \cap D(\xi_j,\delta)$ for all $j \in \{1,..,N\}$. We may assume that 
$\delta$ is small enough to ensure that 
$D(\xi_j,\delta)\cap E=\{\xi_j\}$. This implies
the existence
of $\rho \in (0,1)$ such that if
$z \in E_s \setminus \bigcup\limits_{j=1}^N V_j$, then
$\abs{z} \leq \rho$. Consequently,
$$
\sum\limits_{k=1}^\infty \abs{\phi_k(z)}^2 \lesssim \sum\limits_{k=1}^\infty k^{2\alpha-1}\rho^{2(k-1)} 
$$
for $z \in E_s \setminus \bigcup\limits_{j=1}^N V_j$. Of course, this
upper bound is finite.

Now consider $z \in V_{j_0}$ for some 
$j_0 \in \{1,\ldots,N\}$. Then 
$$
\prod\limits_{j=1}^N \abs{1-\overline{\xi_j}z}^{2\alpha} = \abs{1-\overline{\xi_{j_0}}z}^{2\alpha} \underset{j\neq j_0}{\prod\limits_{j=1}^N} \abs{1-\overline{\xi_j}z}^{2\alpha} \leq \abs{1-\overline{\xi_{j_0}}z}^{2\alpha} 2^{2\alpha N}.
$$
We derive that for any such $z$,
$$
\sum\limits_{k=1}^\infty \abs{\phi_k(z)}^2 \lesssim \left( \cfrac {\abs{1-\overline{\xi_{j_0}}z}} {1-\abs{z}} \right)^{2\alpha}.
$$
The ratio $\cfrac{\vert 1-z\vert}{1-\vert z\vert}$ is bounded
on any Stolz domain. Hence there
exists a constant $C_{j_0}>0$ such that, 
for any $z \in V_{j_0}$, 
$$
\cfrac{\abs{1-\overline{\xi_{j_0}}z}}{1-\abs{z}} \leq C_{j_0}.
$$
Hence, the sum $\sum\limits_{k=1}^\infty \abs{\phi_k(z)}^2$ is uniformly bounded on $V_{j_0}$ (for all $j_0$).
Altogether we have proved that $\sum\limits_{k=1}^\infty \abs{\phi_k(z)}^2$ is uniformly bounded on
$E_s$, which completes the proof.
\end{proof}

\begin{corollary}\label{SFEalpha}
Let $T : X \rightarrow X$ be a Ritt$_E$ operator 
which admits an $\HI(E_r)$ bounded functional calculus for some $r \in (0,1)$. Assume that $X$ has finite cotype. 
Then for any $\alpha >0$, we have an estimate
$$
\norm{x}_{T,\alpha} \lesssim \norm{x},\qquad x\in X.
$$
\end{corollary}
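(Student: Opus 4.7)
The plan is to obtain this as an immediate chain of the two preceding results, which together bridge bounded $\HI$ calculus and square function estimates through the intermediate notion of quadratic $\HI$ calculus. No new computation should be needed.

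First I would observe that the hypothesis that $T$ admits a bounded $\HI(E_r)$ functional calculus forces $T$ to be a Ritt$_E$ operator of type strictly less than $r$ (this is used implicitly in the proof of Theorem \ref{BDDquadratic}: having a bounded $\HI(E_r)$ calculus presupposes that the Dunford integrals $\phi(T)$ are defined via contours inside $E_r$, so $\sigma(T)\subset \overline{E_{r'}}$ for some $r'<r$). In particular, $r$ is $E$-large enough.

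Next, I would pick any $s \in (r,1)$ and invoke Theorem \ref{BDDquadratic}, whose hypotheses (finite cotype of $X$, plus the bounded $\HI(E_r)$ calculus of $T$) are exactly what we have. This delivers that $T$ admits a quadratic $\HI(E_s)$ functional calculus in the sense of Definition \ref{HIquadratic}.

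Finally, I would apply Proposition \ref{quadraticSFE} to $T$, with that same $s \in (r,1)$ (noting that $T$ is of type $<r<s$, so the proposition's hypothesis is satisfied). This yields, for every fixed $\alpha>0$, the desired estimate
\[
\norm{x}_{T,\alpha}\,\lesssim\,\norm{x},\qquad x\in X.
\]
There is no genuine obstacle: the whole content of the corollary is the composition of Theorem \ref{BDDquadratic} (bounded $\HI$ calculus $\Rightarrow$ quadratic $\HI$ calculus, under finite cotype) with Proposition \ref{quadraticSFE} (quadratic $\HI$ calculus $\Rightarrow$ square function estimates for all $\alpha>0$). The only point that deserves a line of comment in the write-up is the small book-keeping step of choosing $s\in(r,1)$ so that both results apply simultaneously.
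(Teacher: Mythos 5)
Your proposal is correct and follows exactly the same route as the paper: invoke Theorem \ref{BDDquadratic} (finite cotype $+$ bounded $\HI(E_r)$ calculus $\Rightarrow$ quadratic $\HI(E_s)$ calculus for $s\in(r,1)$) and then conclude by Proposition \ref{quadraticSFE}. The remarks on the type of $T$ being $<r$ are harmless elaboration, not a deviation.
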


\begin{proof}
Since $X$ has finite cotype, Theorem \ref{BDDquadratic} ensures that $T$ admits an $\HI(E_s)$ quadratic functional calculus for all $s \in (r,1)$. The result therefore follows
from Proposition \ref{quadraticSFE}.
\end{proof}
        
We will now focus on a partial converse of the above
corollary, in the case when square functions are
defined with the
parameter $\alpha=1$. Similar statements
with other square functions (with arbitrary
$\alpha$) will be considered in the next section.
    
\begin{theorem}\label{SQest}
Let $T$ be an $\mathcal{R}$-Ritt$_E$ operator of $\mathcal{R}$-type $r \in (0,1)$. Assume that both $T$ and $T^*$ satisfy estimates
$$
\norm{x}_{T,1} \lesssim \norm{x},\qquad x\in X,
$$
and
$$
\norm{y}_{T^*,1} \lesssim \norm{y},\qquad y\in X^*.
$$
Then for all $s \in (r,1)$, $T$ admits a bounded $\HI(E_s)$ calculus.
\end{theorem}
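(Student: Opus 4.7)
The goal is to prove the dual estimate $|\langle \phi(T)x, y\rangle| \lesssim \norm{\phi}_{\infty,E_s}\,\norm{x}\,\norm{y}$ for polynomial $\phi$ and all $x \in X$, $y \in X^*$; the full $H^\infty(E_s)$-calculus will then follow by density, using Lemma \ref{phirhoT} together with a standard bounded-pointwise approximation of elements of $H_0^\infty(E_s)$ by polynomials. For convenience, write $A = \prod_{j=1}^N(I-\overline{\xi_j}T)$.

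The heart of the argument is a reproducing formula combined with a bilinear pairing matched to the two square functions. By Lemma \ref{ck}, $\prod_j(1-\overline{\xi_j}z)^{-3} = \sum_k c_k z^k$ with $|c_k|\lesssim k^2$, which yields the formal identity $\phi(T)x = \sum_{k\geq 0} c_k T^k A^3 \phi(T)x$. The series converges on $\ran(A)$ thanks to the estimate $\norm{T^k A^\alpha}\lesssim k^{-\alpha}$ extracted from Proposition \ref{Rbddalpha}. Factoring further as $\prod_j(1-\overline{\xi_j}z)^{-3} = \prod_j(1-\overline{\xi_j}z)^{-1}\cdot \prod_j(1-\overline{\xi_j}z)^{-2}$ produces a convolution $c_k = \sum_{k_1+k_2=k} f_{k_1}e_{k_2}$ with $|f_k|\leq C$ (by partial fractions) and $|e_k|\lesssim k$ (by the same partial-fraction computation as in Lemma \ref{ck}).

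Using commutativity of $T$, $A$, and $\phi(T)$, together with the factorization $A^3 = A\cdot A\cdot A$, I would rearrange the identity into a double sum in $k_1, k_2$ in which $\phi(T)T^{k-1}A$ appears as the middle factor. By Lemma \ref{RBdd}, the family $\{k\phi(T)T^{k-1}A : k\geq 1\}$ is $\mathcal{R}$-bounded with constant $\lesssim \norm{\phi}_{\infty,E_s}$, which lets me absorb $\phi$ as an $\mathcal{R}$-bounded multiplier. After pairing with $y$ and rescaling scalar factors into a kernel $M_{k_1, k_2}(\phi)$, one arrives at a representation
\[
\langle \phi(T) x, y\rangle = \sum_{k_1, k_2 \geq 1} M_{k_1, k_2}(\phi)\,\bigl\langle k_1^{1/2} T^{k_1-1} A\,x',\ k_2^{1/2} T^{*k_2-1} A^*\,y\bigr\rangle,
\]
with $\norm{x'}\lesssim \norm{x}$ and $\sup_{k_1}\bigl(\sum_{k_2} |M_{k_1, k_2}(\phi)|^2\bigr)^{1/2}\lesssim \norm{\phi}_{\infty,E_s}$. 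Cauchy-Schwarz in the $L^2(\Omega; X)\times L^2(\Omega; X^*)$-duality, combined with Lemma \ref{Kaiser} to unfold the resulting double Rademacher sum into a product of single ones, then gives
\[
|\langle \phi(T)x, y\rangle| \lesssim \norm{\phi}_{\infty,E_s}\,\norm{x'}_{T,1}\,\norm{y}_{T^*,1} \lesssim \norm{\phi}_{\infty,E_s}\,\norm{x}\,\norm{y},
\]
the two square-function hypotheses delivering the final bounds.

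The main obstacle is the bookkeeping in the triple splitting $A\cdot A\cdot A$: $\phi$ must concentrate as a single $\mathcal{R}$-bounded middle multiplier (governed by Lemma \ref{RBdd}) while the outer two factors must align with the weights $k^{1/2}$ appearing in the definitions of $\norm{\,\cdot\,}_{T,1}$ and $\norm{\,\cdot\,}_{T^*,1}$; shifts in $k_1, k_2$ and the boundary terms at $k_1 = 0$ or $k_2 = 0$ must be handled separately. A secondary point is that Lemma \ref{Kaiser} requires $X$ to have finite cotype, which is implicit in the $\mathcal{R}$-Ritt$_E$ framework but could alternatively be circumvented by a direct contraction-principle argument based on the $\mathcal{R}$-boundedness of $\{k\phi(T)T^{k-1}A\}_k$.
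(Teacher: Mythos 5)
Your starting point is the same as the paper's: the reproducing formula $\prod_j(1-\overline{\xi_j}z)^{-3}=\sum_k c_k z^k$ with $|c_k|\lesssim k^2$, convergence of the corresponding operator series on $\ran\bigl(\prod_j(I-\overline{\xi_j}T)\bigr)$ via Proposition \ref{Rbddalpha}, the $\mathcal{R}$-bounded family $\{k\phi(T)T^{k-1}A\}$ from Lemma \ref{RBdd}, and a final density argument. But the way you propose to break up the sum is genuinely different from the paper's, and it is where your plan goes wrong. The paper takes a \emph{diagonal} route: $\{0,\dots,3n+2\}$ is split by residue class mod $3$, the term $c_{3k}T^{3k}A^3\phi(T)$ is factored as $[kT^k\phi(T)A]\cdot[k^{1/2}T^kA]\cdot$(adjoint of)$[k^{1/2}{T^*}^kA^*]$ with all \emph{three} factors sharing the single index $k$, and then one applies the elementary bilinear inequality (\ref{CSI}) — plain Cauchy–Schwarz on $\sum_k\langle x_k,y_k\rangle$ — together with $\mathcal{R}$-boundedness in the form of a contraction, entirely free of cotype hypotheses. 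You instead propose an \emph{off-diagonal} (convolutional) split $c_k=\sum_{k_1+k_2=k}f_{k_1}e_{k_2}$ and a two-index kernel $M_{k_1,k_2}(\phi)$ controlled by Kaiser–Weis (Lemma \ref{Kaiser}).

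Two concrete problems result. First, the factorization you need is intrinsically \emph{three-way}: one $(T^{\,\cdot},A)$-block must carry $\phi$ and be absorbed as the $\mathcal{R}$-bounded middle multiplier, one block must align with the $k^{1/2}T^{k-1}A$ weight of $\norm{\cdot}_{T,1}$, and one with the corresponding weight for $T^*$. A convolution in \emph{two} indices $c_k=\sum_{k_1+k_2=k}f_{k_1}e_{k_2}$ does not furnish a third index for the middle multiplier; if instead you let $\phi(T)T^{k-1}A$ carry the total power $k=k_1+k_2$, the representation you display, in which the $T$-powers on the two sides are $T^{k_1-1}$ and $T^{*\,k_2-1}$, no longer comes from the reproducing identity (the $T$-powers double-count). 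Second — and more seriously for the statement as given — your Kaiser–Weis step requires $X$ to have finite cotype. That hypothesis is \emph{not} implicit in the $\mathcal{R}$-Ritt$_E$ framework (an $\mathcal{R}$-bounded resolvent set makes sense on an arbitrary Banach space and does not force finite cotype), and it does not appear in the statement of Theorem \ref{SQest}. So even if the combinatorics were repaired, your argument would prove a strictly weaker theorem. The diagonal split in the paper is precisely what avoids Kaiser–Weis (and any cotype assumption) here; the off-diagonal machinery you invoke is what the paper's \emph{later} Theorem \ref{eqSF} uses, where finite cotype is explicitly assumed for that very reason.
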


\begin{proof}
We consider the sequence $(c_k)_{k\geq0}$ defined in 
Lemma \ref{ck}. For any $\rho \in (0,1)$, the
series $\sum\limits_{k\geq0}c_k(\rho T)^k$ is 
absolutely convergent in $B(X)$, and applying the identity 
from Lemma \ref{ck}, we have
\begin{equation}\label{ckrho}
\sum\limits_{k=0}^\infty c_k(\rho T)^k 
\prod\limits_{j=1}^N(I-\overline{\xi_j}(\rho T))^3 = I_X
\end{equation}
Consider momentarily some
$x \in \ran\Bigl(\prod\limits_{j=1}^N(I-\overline{\xi_j}T)\Bigr)$. Thus, there exists $a \in X$ such that 
$x = \prod\limits_{j=1}^N(I-\overline{\xi_j}T)a $. According to Proposition \ref{Rbddalpha} (with $\alpha=4$) and using the estimate $\abs{c_k} \lesssim k^2$, we have
$$
\bignorm{c_k T^k \prod\limits_{j=1}^N(I-\overline{\xi_j}T)^3x} =  
\abs{c_k} \bignorm{T^k \prod\limits_{j=1}^N(I-\overline{\xi_j}T)^4a}
\lesssim \frac 1 {k^2} \norm{x}.
$$
We deduce that the series $\displaystyle{\sum
\limits_{k=0}^\infty c_k T^k \prod\limits_{j=1}^N(I-\overline{\xi_j}T)^3x}$
is absolutely convergent in $X$. Further,
using again Proposition \ref{Rbddalpha}, we actually have
$$
\sum
\limits_{k=0}^\infty \abs{c_k}
\sup_{\rho\in(0,1]}\Bignorm{(\rho T)^k \prod\limits_{j=1}^N(I-\overline{\xi_j}(\rho T))^3x}
\,<\infty.
$$
We therefore deduce from (\ref{ckrho}) that
\begin{equation}\label{idCK}
\sum\limits_{k=0}^\infty c_k T^k \prod\limits_{j=1}^N(I-\overline{\xi_j}T)^3x = x.
\end{equation}

We will use the classical fact that for 
any finite families $(x_k)_{k= 1,..,n}$ of $X$ and $(y_k)_{k=1,..,n}$ of $X^{*}$, we have
\begin{equation*}
\sum\limits_{k=1}^n \langle{x_k,y_k}\rangle = \displaystyle{\int_\Omega \left\langle{\sum\limits_{k=1}^n\varepsilon_k(u)x_k,\sum\limits_{k = 1}^n\varepsilon_k(u)y_k}\right\rangle\dx{\P(u)}},
\end{equation*}
and hence, by the Cauchy-Schwarz inequality,
\begin{equation}\label{CSI}
\abs{\sum\limits_{k=1}^n \langle{x_k,y_k}\rangle} \leq \Bignorm{\sum\limits_{k=1}^n\varepsilon_k(u)x_k}_{\Rad{X}}\Bignorm{\sum\limits_{k=1}^n\varepsilon_k(u)y_k}_{\Rad(X^{*})}.
\end{equation}

For any $n \geq 1$, we have
\begin{align*}
\sum\limits_{k=0}^{3n+2} c_k T^k 
\prod\limits_{j=1}^N(I-\overline{\xi_j}T)^3 
&= \sum\limits_{k=0}^{n} c_{3k} T^{3k} \prod\limits_{j=1}^N (I-\overline{\xi_j}T)^3 \\
&+ T \sum\limits_{k=0}^{n} c_{3k+1} T^{3k} \prod\limits_{j=1}^N (I-\overline{\xi_j}T)^3 \\
&+ T^2 \sum\limits_{k=0}^{n} c_{3k+2} T^{3k} \prod\limits_{j=1}^N (I-\overline{\xi_j}T)^3.
\end{align*}

We let $\phi$ be a polynomial 
belonging to $H^\infty_0(E_s)$.
We introduce, for all $n\geq 1$, the operators 
$$
V_{n,0} = \sum\limits_{k=0}^n c_{3k} \phi(T) T^{3k}\prod\limits_{j=1}^N (I - \overline{\xi_j}T)^3,
$$
$$
V_{n,1} = T\sum\limits_{k=0}^n c_{3k+1} \phi(T) T^{3k}\prod\limits_{j=1}^N (I - \overline{\xi_j}T)^3,
$$
$$
V_{n,2} = T^2 \sum\limits_{k=0}^n c_{3k+2} \phi(T) T^{3k}\prod\limits_{j=1}^N (I - \overline{\xi_j}T)^3,
$$
so that
$$
\sum\limits_{k=0}^\infty c_k \phi(T)
T^k \prod\limits_{j=1}^N(I-\overline{\xi_j}T)^3
=V_{n,0}+V_{n,1}+V_{n,2}.
$$
The assumption on $\phi$ ensures that 
$\prod\limits_{j=1}^N (1-\overline{\xi_j}z)$ is a 
factor of $\phi$, that is, $\phi(z)=\prod\limits_{j=1}^N (1-\overline{\xi_j}z)Q(z)$
for some polynomial $Q$. Then $\phi(T)=\prod\limits_{j=1}^N (1-\overline{\xi_j}T)Q(T)$.
Hence 
for any $x \in X$, 
$\phi(T)x \in \ran\Bigl(\prod\limits_{j=1}^N (1-\overline{\xi_j}T)\Bigr)$. Consequently,
(\ref{idCK}) is valid with $\phi(T)x$ instead of 
$x$, that is,
\begin{equation}\label{Valid}
\sum\limits_{k=0}^\infty c_k \phi(T)
T^k \prod\limits_{j=1}^N(I-\overline{\xi_j}T)^3x = 
\phi(T)x,\qquad x\in X.
\end{equation}

Let $x \in X$ and $y \in X^*$. We have
\begin{align*}
\abs{\langle{V_{n,0} x,y\rangle} }
&= \abs{\langle{\sum
\limits_{k=0}^n c_{3k} \phi(T) T^{3k}
\prod\limits_{j=1}^N 
(I - \overline{\xi_j}T)^3 x,y\rangle} }\\
&=
\abs{\sum\limits_{k = 0}^n{c_{3k}\langle{[T^k\phi(T)\prod\limits_{j=1}^N(1-\overline{\xi_j}T)]T^k\prod\limits_{j=1}^N(1-\overline{\xi_j}T)x, {{T^*}^k}\prod\limits_{j=1}^N(1-\overline{\xi_j}{T}^{*})y}\rangle}}\\
&= 
\abs{\sum\limits_{k = 0}^n{\frac{c_{3k}}{k^2}\langle{[kT^k\phi(T)\prod\limits_{j=1}^N(1-\overline{\xi_j}T)]k^{1/2}T^k\prod\limits_{j=1}^N(1-\overline{\xi_j}T)x, k^{1/2}{{T^{*}}^k}\prod\limits_{j=1}^N(1-\overline{\xi_j}T^{*})y}\rangle}}
\end{align*}
Hence by Lemma \ref{RBdd} and by
(\ref{CSI}) applied with
$$ 
x_k = \frac{c_{3k}}{k^{1/2}}[T^k\phi( T)\prod\limits_{j=1}^N(1-\overline{\xi_j}T)]T^k\prod\limits_{j=1}^N(1-\overline{\xi_j}T)x
\quad\hbox{and}\quad
y_k = k^{1/2}{T^*}^k\prod\limits_{j=1}^N(1-\overline{\xi_j}{T}^{*})y,
$$
we have the following inequalities,
\begin{align*}
\abs{\langle{V_{n,0} x,y\rangle} }
&\leq
\Bignorm{\sum\limits_{k = 0}^n \eps_k\otimes \frac{c_{3k}}{k^2} [ k T^k\phi(T)\prod\limits_{j=1}^N(1-\overline{\xi_j}T)]k^{1/2}T^k\prod\limits_{j=1}^N(1-\overline{\xi_j}T)x}_{\Rad{X}}\\
&\times
\Bignorm{\sum\limits_{k = 0}^n \eps_k\otimes k^{1/2}{T^{*}}^{k}\prod\limits_{j=1}^N(1-\overline{\xi_j}T^{*})y}_{\Rad(X^*)}\\
&\lesssim
R\Bigl(\Bigl\{k T^k \phi(T) \prod\limits_{j=1}^N(1-\overline{\xi_j} T) : k \geq 0\Bigr\}\Bigr) \norm{x}_{T,1} \norm{y}_{T^*,1}\\
&\lesssim 
\norm{\phi}_{\infty,E_s} \norm{x}_{T,1} \norm{y}_{T^*,1}.
\end{align*}
In the same way, we get the 
other two estimates
$$
\abs{\langle V_{n,1} x , y \rangle} \lesssim \norm{\phi}_{\infty,E_s} \norm{x}_{T,1} \norm{y}_{T^*,1} \qquad \textrm{and} \qquad \abs{\langle V_{n,2} x , y \rangle} \lesssim \norm{\phi}_{\infty,E_s} \norm{x}_{T,1} \norm{y}_{T^*,1}.
$$
Finally, combining these three estimates, we have
$$
\abs{\langle{\sum\limits_{k=0}^{3n+2} c_k T^k \prod\limits_{j=1}^N (I-\overline{\xi_j}T)^3 \phi(T) x,y}\rangle} \lesssim \norm{\phi}_{\infty,E_s} \norm{x}_{T,1} \norm{y}_{T^*,1}
$$
Hence, letting $n\to \infty$ and using
(\ref{Valid}), we deduce an estimate
$$
\abs{\langle{\phi(T) x,y}\rangle} \lesssim \norm{\phi}_{\infty,E_s} \norm{x}_{T,1} \norm{y}_{T^*,1}
$$
Since the elements $x$ and $y$ 
were arbitrary in $X$ and $X^*$, respectively, we deduce an estimate 
$$
\norm{\phi(T)} \lesssim \norm{\phi}_{\infty,E_s}
$$ 
for any polynomial $\phi$ in $\HI_0(E_s)$. 

Now the argument in the proof \cite[Proposition 3.4]{BLM} 
shows that this estimate implies that $T$ admits a bounded $\HI(E_s)$ functional calculus.
\end{proof}

In the case of the Banach space $X$ has property $(\Delta)$, we have the following, consequence of a similar result for sectorial operators.

\begin{proposition}\label{DeltaRittR}
Assume that $X$ has property $(\Delta)$. 
Let $T$ be a Ritt$_E$ operator on $X$.
If $T$ admits a bounded $\HI(E_s)$ functional calculus for some $s \in (0,1)$, then $T$ is $\mathcal{R}$-Ritt$_E$.
\end{proposition}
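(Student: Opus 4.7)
The plan is to reduce to the analogous result for sectorial operators due to Kalton and Weis (see e.g. \cite[Theorem 10.3.4]{HVVW}): on a Banach space with property $(\Delta)$, a sectorial operator admitting a bounded $H^\infty$ functional calculus is automatically $R$-sectorial. The link with our situation is given by \cite[Lemma 2.4]{BLM}, which asserts that for each $j\in\{1,\ldots,N\}$, the operator $A_j := I - \overline{\xi_j}T$ is sectorial (of type $<\pi$).

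First, I would show that the bounded $H^\infty(E_s)$ functional calculus of $T$ yields, for each $j$, a bounded $H^\infty(\Sigma_{\omega_j})$ functional calculus for $A_j$, on some sector $\Sigma_{\omega_j}$ with $\omega_j < \pi$. The conformal map $z\mapsto 1 - \overline{\xi_j}z$ sends a neighborhood of $\xi_j$ inside $E_s$ biholomorphically onto a neighborhood of $0$ included in a suitable sector $\Sigma_{\omega_j}$. Via a cut-off argument localized at $\xi_j$, any function $f\in H_0^\infty(\Sigma_{\omega_j})$ lifts to a function $\tilde f\in H_0^\infty(E_s)$ supported near $\xi_j$, with $\|\tilde f\|_{\infty,E_s}\lesssim \|f\|_{\infty,\Sigma_{\omega_j}}$, and satisfying $f(A_j)=\tilde f(T)$ by the compatibility of the two Dunford--Riesz calculi. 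The assumed boundedness of the $H^\infty(E_s)$ calculus of $T$ then produces the desired norm estimate $\|f(A_j)\|\lesssim \|f\|_{\infty,\Sigma_{\omega_j}}$.

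Applying the Kalton--Weis theorem to each $A_j$ gives that the set $\{\mu R(\mu,A_j) : \mu \notin\overline{\Sigma_{\omega_j}}\}$ is $\mathcal{R}$-bounded. To conclude, I would use the elementary identity
$$
R(z,T) = -\overline{\xi_j}\, R\bigl(1-\overline{\xi_j}z,\,A_j\bigr),
$$
valid whenever $1-\overline{\xi_j}z\notin\sigma(A_j)$. Near each vertex $\xi_j$ this yields
$$
\prod_{i=1}^N(\xi_i-z)\,R(z,T) \,=\, -\bigl(1-\overline{\xi_j}z\bigr) R\bigl(1-\overline{\xi_j}z,\,A_j\bigr)\,\prod_{i\neq j}(\xi_i-z),
$$
whose $\mathcal{R}$-boundedness follows from the $R$-sectoriality of $A_j$ combined with the boundedness of the scalar factor $\prod_{i\neq j}(\xi_i-z)$ on the region considered. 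Partitioning $D(0,2)\setminus\overline{E_s}$ into small neighborhoods of each $\xi_j$, on which the above argument applies, and a complementary compact set $K$ bounded away from $E$, on which the resolvent is norm-bounded (hence trivially $\mathcal{R}$-bounded), the characterization given by Proposition \ref{R-RITTE} shows that $T$ is $\mathcal{R}$-Ritt$_E$.

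The main technical obstacle lies in the first step: cleanly transferring the bounded $H^\infty(E_s)$ functional calculus of $T$ into a bounded $H^\infty(\Sigma_{\omega_j})$ calculus for each $A_j$, since the change of variables $z\mapsto 1-\overline{\xi_j}z$ is only biholomorphic locally near $\xi_j$. One must carefully handle the cut-off so that the resulting mapping $f\mapsto \tilde f$ is compatible with the algebra homomorphism structure and preserves the norm estimates, taking care of the behavior of $\tilde f$ at the remaining vertices of $E$.
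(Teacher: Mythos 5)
Your proposal follows essentially the same route as the paper: pass to the sectorial operators $A_j = I - \overline{\xi_j}T$, use the bounded $H^\infty(E_s)$ calculus of $T$ to obtain a bounded $H^\infty(\Sigma_\omega)$ calculus for each $A_j$, invoke the $H^\infty \Rightarrow \mathcal{R}$-sectorial result on spaces with property $(\Delta)$ (the paper cites \cite[Prop.\ 7.4]{LM0}, you cite the Kalton--Weis version in \cite{HVVW}), and transfer $\mathcal{R}$-boundedness of the resolvents back to $T$ via the affine change of variable. One small remark: the map $z\mapsto 1-\overline{\xi_j}z$ is a \emph{global} affine biholomorphism (rotation plus translation), not merely a local one; the real obstruction you correctly identify is that the pullback of an $H_0^\infty(\Sigma_\omega)$ function fails to decay at the \emph{other} vertices $\xi_i$, $i\neq j$, which is what the cut-off is for — the paper leaves this step implicit by referring to \cite[Lemma 2.7]{BLM}.
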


\begin{proof}
For any $j \in \{1,..,N\}$, we consider the operator $A_j = I - \overline{\xi_j}T$. Then $A_j$ is sectorial, with a bounded $\HI(\Sigma_{\omega})$ functional calculus, for some $\omega \in (0,\frac \pi 2)$.
        
We deduce by \cite[proposition 7.4]{LM0} that each $A_j$ is $\mathcal{R}$-sectorial of $\mathcal{R}$-type $\omega' \in (0,\frac \pi 2)$.
With the same arguments as in the proof of \cite[Lemma 2.7]{BLM}, it follows that $T$ is $\mathcal{R}$-Ritt$_E$.
\end{proof}

Any Banach space with property $(\Delta)$ has finite cotype. Hence, combining Proposition \ref{DeltaRittR} with Corollary \ref{SFEalpha}, we deduce the following equivalence.

\begin{corollary}
Let $X$ be a Banach space with property $(\Delta)$ and let $T : X \rightarrow X$ be a Ritt$_E$ operator. The following are equivalent.
\begin{enumerate}
\item $T$ admits a bounded $\HI(E_s)$ functional calculus for some $s \in (0,1)$;
\item $T$ is $\mathcal{R}$-Ritt$_E$ and $T$ and $T^*$ both satisfy uniform estimates
$$\norm{x}_{T,1} \lesssim \norm{x} \quad \textrm{and} \quad \norm{y}_{T^*,1} \lesssim \norm{y}
$$
for $x \in X$ and $y \in X^*$.
\end{enumerate}
\end{corollary}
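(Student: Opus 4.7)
The plan is to separately prove the two implications, both of which reduce to straightforward applications of results established earlier in the paper. No new technical machinery is needed beyond transporting estimates to the adjoint.

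For the direction $(2)\Rightarrow(1)$, I would simply invoke Theorem \ref{SQest}: the hypothesis that $T$ is $\mathcal{R}$-Ritt$_E$ supplies an $\mathcal{R}$-type $r\in(0,1)$, and the uniform square function estimates on $T$ and $T^*$ are exactly those required. Theorem \ref{SQest} then delivers a bounded $\HI(E_s)$ functional calculus for every $s\in(r,1)$, in particular for some $s\in(0,1)$.

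For the direction $(1)\Rightarrow(2)$, I would proceed in three steps. First, since $X$ has property $(\Delta)$ and $T$ admits a bounded $\HI(E_s)$ functional calculus, Proposition \ref{DeltaRittR} gives that $T$ is $\mathcal{R}$-Ritt$_E$. Second, property $(\Delta)$ implies finite cotype for $X$, so Corollary \ref{SFEalpha} applied with $\alpha=1$ yields the desired estimate $\norm{x}_{T,1}\lesssim\norm{x}$ for all $x\in X$. Third, to obtain the same estimate on the dual side, I would observe that the bounded $\HI(E_s)$ calculus of $T$ passes to $T^*$ by the standard duality of the Dunford--Riesz integrals defining $\phi(T)$, that $T^*$ is again Ritt$_E$ on $X^*$ (spectral properties and resolvent bounds transfer), and that under property $(\Delta)$ the dual $X^*$ inherits finite cotype (this follows from the self-dual nature of $K$-convexity, which $(\Delta)$ entails, combined with the cotype structure built into $(\Delta)$). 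Corollary \ref{SFEalpha} applied to $T^*$ on $X^*$ then produces $\norm{y}_{T^*,1}\lesssim\norm{y}$ for all $y\in X^*$.

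The only mildly delicate point is verifying that the three hypotheses of Corollary \ref{SFEalpha} all transfer from $T$ on $X$ to $T^*$ on $X^*$, namely the Ritt$_E$ property, the bounded $\HI$ calculus, and the finite cotype of the underlying space. Each of these is a routine duality statement — either essentially tautological (Ritt$_E$, $\HI$ calculus) or a known consequence of $K$-convexity ($X^*$ inherits finite cotype) — and introduces no genuine obstruction. Combined with the two other invocations, this completes the chain of equivalences announced in the statement.
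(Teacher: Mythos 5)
Your two implications are approached as the paper intends: $(2)\Rightarrow(1)$ by Theorem \ref{SQest}, and the first two parts of $(1)\Rightarrow(2)$ (namely that $T$ is $\mathcal{R}$-Ritt$_E$ via Proposition \ref{DeltaRittR}, and that $\norm{x}_{T,1}\lesssim\norm{x}$ via Corollary \ref{SFEalpha} together with the fact that $(\Delta)$ implies finite cotype) are correct and match the paper.

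The gap is in the step for $T^*$. You justify applying Corollary \ref{SFEalpha} to $T^*$ on $X^*$ by asserting that property $(\Delta)$ ``entails $K$-convexity'' and hence that $X^*$ inherits finite cotype. This is false. Property $(\Delta)$ is implied by property $(\alpha)$, which every Banach lattice with finite cotype possesses; in particular $X=L^1$ has property $(\Delta)$ while being very far from $K$-convex, and its dual $X^*=L^\infty$ has trivial cotype. So from the stated hypothesis one cannot conclude that $X^*$ has finite cotype, and Corollary \ref{SFEalpha} cannot be invoked for $T^*$ on $X^*$ by the route you describe. The other two transfers you list (that $T^*$ is Ritt$_E$ and that $T^*$ inherits a bounded $\HI$ calculus) are fine, but the cotype transfer is the genuine obstruction, not a ``routine duality statement.'' This is precisely the delicate point in passing from the $T$-side square function estimate to the $T^*$-side one, and your argument as written does not close it.
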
 

\begin{remark}
This corollary is satisfied if $X$ is UMD, since these Banach spaces have property $(\Delta)$.
\end{remark}

\section{Equivalence of square functions}

The main goal of this section is to prove that if an operator $T: X \to X$ is Ritt$_E$
on a reflexive Banach space with finite cotype, then square functions associated are all equivalent, as presented in theorem \ref{eqSF}.

The following is elementary.

\begin{lemma}\label{squareFctCommute}
Let $T \in B(X)$ be a Ritt$_E$ operator and let $\alpha >0$. 
Let $S \in B(X)$ such that $ST = TS$. Then, for all $x \in X$, we have
$$
\norm{S(x)}_{T,\alpha} \leq \norm{S}\norm{x}_{T,\alpha}.
$$
\end{lemma}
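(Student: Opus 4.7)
The plan is essentially to push $S$ through each building block inside the Rademacher sum and then exploit the contractivity of a bounded linear map with respect to the $\Rad(X)$-norm. Concretely, I would carry out the following steps.

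First, I would check that $S$ commutes with each operator of the form $T^{k-1}\prod_{j=1}^N(I-\overline{\xi_j}T)^\alpha$. Commutation with $T^{k-1}$ is immediate from $ST=TS$. For the fractional factors, I would invoke the remark following Definition \ref{SF}: each $I-\overline{\xi_j}T$ is sectorial by \cite[Lemma 2.4]{BLM}, so $(I-\overline{\xi_j}T)^\alpha$ is defined through the sectorial functional calculus of $I-\overline{\xi_j}T$. Since $S$ commutes with $T$, it commutes with the resolvents $R(\lambda,I-\overline{\xi_j}T)$ for every $\lambda$ in the resolvent set, and therefore with any operator built from these resolvents by the functional calculus. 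Thus $S\,(I-\overline{\xi_j}T)^\alpha=(I-\overline{\xi_j}T)^\alpha S$ for each $j$, and by iteration $S$ commutes with $T^{k-1}\prod_{j=1}^N(I-\overline{\xi_j}T)^\alpha$ for every $k\geq 1$.

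Second, I would observe that for any bounded $S\in B(X)$ and any finite family $(y_k)$ in $X$, one has the pointwise estimate $\bignorm{\sum_k \varepsilon_k(\omega)S(y_k)}=\bignorm{S\bigl(\sum_k\varepsilon_k(\omega)y_k\bigr)}\leq\norm{S}\bignorm{\sum_k\varepsilon_k(\omega)y_k}$ on the probability space $(\Omega,\P)$; squaring and integrating yields
$$
\Bignorm{\sum_k \varepsilon_k\otimes S(y_k)}_{\Rad(X)}\leq \norm{S}\,\Bignorm{\sum_k \varepsilon_k\otimes y_k}_{\Rad(X)}.
$$

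Combining the two steps with $y_k=k^{\alpha-1/2}T^{k-1}\prod_{j=1}^N(I-\overline{\xi_j}T)^\alpha(x)$, I obtain for every $n\geq 1$:
$$
\Bignorm{\sum_{k=1}^n k^{\alpha-1/2}\varepsilon_k\otimes T^{k-1}\prod_{j=1}^N(I-\overline{\xi_j}T)^\alpha S(x)}_{\Rad(X)}\leq \norm{S}\Bignorm{\sum_{k=1}^n k^{\alpha-1/2}\varepsilon_k\otimes T^{k-1}\prod_{j=1}^N(I-\overline{\xi_j}T)^\alpha(x)}_{\Rad(X)}.
$$
Passing to the limit $n\to\infty$ in both sides and applying Definition \ref{SF} gives the inequality $\norm{S(x)}_{T,\alpha}\leq \norm{S}\norm{x}_{T,\alpha}$. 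There is no real obstacle here; the only step that deserves a sentence of justification is the commutation of $S$ with the fractional powers, which reduces to its commutation with the resolvents of $I-\overline{\xi_j}T$.
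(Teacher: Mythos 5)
Your proof is correct and follows exactly the same route as the paper's: push $S$ through the operators $T^{k-1}\prod_j(I-\overline{\xi_j}T)^\alpha$ using commutation, then use the elementary contractivity $\norm{\sum_k\varepsilon_k\otimes S(y_k)}_{\Rad X}\leq\norm{S}\norm{\sum_k\varepsilon_k\otimes y_k}_{\Rad X}$, and pass to the limit. The only difference is that you spell out why $S$ commutes with the fractional powers $(I-\overline{\xi_j}T)^\alpha$ (via the sectorial functional calculus and commutation with resolvents), which the paper leaves implicit in the phrase ``Since $ST=TS$''; that extra sentence is a welcome clarification, not a different argument.
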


\begin{proof}
Let $x \in X$ and let $x_k := k^{\alpha-\frac 1 2}T^{k-1}\prod\limits_{j=1}^N(I-\overline{\xi_j}T)^\alpha x$ for all $k\geq 1$. Given any $n\geq 1$, we have
$$
\Bignorm{\sum\limits_{k=1}^n\varepsilon_k\otimes S(x_k)}_{\Rad{X}} \leq \norm{S} \Bignorm{\sum\limits_{k=1}^n\varepsilon_k\otimes x_k}_{\Rad{X}} \leq \norm{S}\norm{x}_{T,\alpha}.
$$
Since $ST = TS$, we have 
$$
S(x_k) = k^{\alpha-\frac 1 2}\left[ST^{k-1}\prod\limits_{j=1}^N(I-\overline{\xi_j}T)^\alpha\right](x) = k^{s-\frac 1 2}\left[T^{k-1}\prod\limits_{j=1}^N(I-\overline{\xi_j}T)^\alpha\right](S(x))
$$
for all $k\geq 1$. The result follows at once.
\end{proof}

The following is the main result of this section.
    
\begin{theorem}\label{eqSF}
Assume that $X$ is reflexive and has finite cotype.
Let $T : X \rightarrow X$ be a $\mathcal{R}$-Ritt$_E$ operator. 
Then, for any $\alpha,\beta >0$, we have an equivalence 
$$
\norm{x}_{T,\alpha} \approx \norm{x}_{T,\beta},\qquad x\in X.
$$
\end{theorem}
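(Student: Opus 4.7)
The proof combines three main ingredients: a mean-ergodic reduction to a dense subspace, an explicit scalar-matrix identity between the summands of $\norm{\,\cdotp}_{T,\alpha}$ and $\norm{\,\cdotp}_{T,\beta}$, and a transfer from $\ell^2$-boundedness to $\Rad{X}$-boundedness using finite cotype. I establish both inequalities $\norm{x}_{T,\alpha}\lesssim\norm{x}_{T,\beta}$ and $\norm{x}_{T,\beta}\lesssim\norm{x}_{T,\alpha}$ separately, assuming without loss of generality that $\beta>\alpha>0$.

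As $X$ is reflexive and $T$ is power-bounded (which follows from the $\mathcal{R}$-Ritt$_E$ hypothesis), the mean ergodic theorem applied iteratively to each $\overline{\xi_j}T$ yields
\[
X=\Bigl(\bigoplus_{j=1}^N\ker(I-\overline{\xi_j}T)\Bigr)\oplus\overline{\ran\Bigl(\prod_{j=1}^N(I-\overline{\xi_j}T)\Bigr)}.
\]
On any $\ker(I-\overline{\xi_{j_0}}T)$, $(I-\overline{\xi_{j_0}}T)^s$ vanishes for every $s>0$, so both square functions are zero there. Hence it suffices to prove the two-sided estimate for $x\in\ran(\prod_j A_j)$, with $A_j:=I-\overline{\xi_j}T$. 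On this subspace Lemma \ref{SQlimrho1} provides both finiteness of $\norm{x}_{T,s}$ and the convergence $\norm{x}_{\rho T,s}\to\norm{x}_{T,s}$ as $\rho\to 1^-$ for $s\in\{\alpha,\beta\}$, allowing me to carry out every step at level $\rho T$ for $\rho\in(0,1)$ before passing to the limit.

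Setting $B_k^{(s)}(S):=k^{s-1/2}S^{k-1}\prod_j(I-\overline{\xi_j}S)^s$ and $\gamma:=\beta-\alpha>0$, I would expand the holomorphic function $\prod_j(1-\overline{\xi_j}z)^\gamma = \sum_{l\geq 0}d_l z^l$ on $\D$. Its only unit-circle singularities are algebraic branch points of type $\gamma$ at the $\xi_j$, so a classical Darboux-type asymptotic analysis gives $\abs{d_l}\lesssim(1+l)^{-\gamma-1}$. For $\rho\in(0,1)$ the series $\sum_l d_l(\rho T)^l$ converges absolutely in $B(X)$ to $\prod_j(I-\overline{\xi_j}\rho T)^\gamma$, and composing with $T^{k-1}\prod_j A_j(\rho T)^\alpha$ yields the operator identity
\[
B_k^{(\beta)}(\rho T)=\sum_{m\geq k}b_{km}\,B_m^{(\alpha)}(\rho T),\qquad b_{km}=k^{\beta-1/2}d_{m-k}m^{-\alpha+1/2}.
\]
A Schur test, splitting $m-k\leq k$ from $m-k>k$ and using $|d_l|\lesssim(1+l)^{-\gamma-1}$, yields both $\sup_k\sum_m\abs{b_{km}}^2<\infty$ and $\sup_m\sum_k\abs{b_{km}}^2<\infty$; hence $(b_{km})$ is a bounded operator on $\ell^2$. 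The reverse direction is parallel, using $\prod_j(1-\overline{\xi_j}z)^{-\gamma}=\sum_l d'_l z^l$ with $\abs{d'_l}\lesssim(1+l)^{\gamma-1}$: the polynomial growth of $(d'_l)$ is balanced by the weights $m^{\alpha-1/2}$ and $k^{1/2-\beta}$ in the associated matrix $(a_{mk})$, which also passes the Schur test.

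The closing step transfers the scalar $\ell^2$-bound to $\Rad{X}$: under finite cotype, the equivalence $\Rad{X}\cong\gamma(X)$ combined with the rotational invariance of Gaussian measures (and Lemma \ref{Kaiser} via a Fubini/duality argument) gives, for every $\ell^2$-bounded matrix $(c_{km})$,
\[
\Bignorm{\sum_k\eps_k\otimes\sum_m c_{km}u_m}_{\Rad{X}}\lesssim\Bignorm{\sum_m\eps_m\otimes u_m}_{\Rad{X}}
\]
for any finite family $(u_m)\subset X$. Applying this to $(b_{km})$ with $u_m=B_m^{(\alpha)}(\rho T)x$, and then to the analogous matrix in the reverse direction, yields both estimates at level $\rho T$; letting $\rho\to 1^-$ through Lemma \ref{SQlimrho1} concludes. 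The main obstacle is the reverse direction: since $(d'_l)$ grows, $\sum_l d'_l T^l$ does not converge in operator norm at $\rho=1$, so the identity $B_m^{(\alpha)}=\sum_k a_{mk}B_k^{(\beta)}$ can only be justified at level $\rho<1$; this is exactly why the whole argument must be set up on $\rho T$ and the $\rho$-uniform $\mathcal{R}$-bounds of Proposition \ref{Rbddalpha} together with Lemma \ref{SQlimrho1} are used to transfer the conclusion to $T$.
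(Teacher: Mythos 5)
Your overall skeleton (mean-ergodic reduction, expansion into a scalar matrix identity at the level $\rho T$, transfer from $\ell^2$ to $\Rad{X}$ via finite cotype, and passage to the limit through Lemma~\ref{SQlimrho1}) matches the paper's proof. However, the central matrix step contains a fatal error, and it is exactly here that the paper does something genuinely different.

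\textbf{The matrix $(b_{km})$ is not bounded on $\ell^2$.} Your identity $B_k^{(\beta)}(\rho T)=\sum_{m\ge k}b_{km}B_m^{(\alpha)}(\rho T)$ with $b_{km}=k^{\beta-1/2}d_{m-k}\,m^{-\alpha+1/2}$ is correct, but its diagonal entries are $b_{kk}=k^{\beta-\alpha}d_0=k^\gamma$ (since $d_0=1$), which tend to infinity because $\gamma=\beta-\alpha>0$. A matrix with unbounded diagonal cannot act boundedly on $\ell^2$ (just test on the basis vectors $e_k$). Your claimed estimate $\sup_k\sum_m\abs{b_{km}}^2<\infty$ is also false: with $\abs{d_l}\lesssim (1+l)^{-\gamma-1}$ the $l\le k$ part already gives $\sum_m\abs{b_{km}}^2\gtrsim k^{2\beta-1}\cdot k^{-2\alpha+1}=k^{2\gamma}\to\infty$. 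And even setting that aside, ``rows and columns uniformly in $\ell^2$'' is not a criterion for $\ell^2$-boundedness (it only gives $\ell^1\to\ell^2$ and $\ell^2\to\ell^\infty$); the Schur test requires $\ell^1$-type conditions against a weight. In short, the direction $\norm{x}_{T,\beta}\lesssim\norm{x}_{T,\alpha}$ with $\beta>\alpha$ simply cannot be proved by directly expanding $\prod_j(1-\overline{\xi_j}z)^{\gamma}$: the resulting Toeplitz-type matrix is unbounded, while the transfer argument needs an $\ell^2$-bounded one. (Incidentally the roles of the two directions are exactly reversed from what you might expect: the expansion of $\prod_j(1-\overline{\xi_j}z)^{-\gamma}$, which you treat as the ``harder'' reverse direction, produces a matrix with diagonal $m^{-\gamma}\to 0$ that does pass a weighted Schur test with weight $k^{-1/2}$.)

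\textbf{What the paper does instead.} Rather than writing $B_k^{(\beta)}$ directly in terms of the $B_m^{(\alpha)}$, the paper picks the auxiliary integer $M=E(\alpha)+1$, sets $\gamma=M-\alpha\in(0,1]$, and starts from the operator identity $I=\sum_k c_k k^{\alpha-1/2}(\rho T)^{k-1}\prod_j(I-\overline{\xi_j}\rho T)^M$ where $\prod_j(1-\overline{\xi_j}z)^{-M}=\sum_k c_k k^{\alpha-1/2}z^{k-1}$. Multiplying this by $m^{\beta-1/2}(\rho T)^{m-1}\prod_j(I-\overline{\xi_j}\rho T)^\beta x$ and regrouping, the operator coefficient is factored as a \emph{scalar} $\tfrac{m^{\beta-1/2}c_k}{(m+k)^{\beta+\gamma}}$ times the \emph{$\mathcal{R}$-bounded} operator $(m+k)^{\beta+\gamma}(\rho T)^{m+k-1}\prod_j(I-\overline{\xi_j}\rho T)^{\beta+\gamma}$ (covered by Proposition~\ref{Rbddalpha}). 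With $c_k=O(k^{\gamma-1/2})$, the scalar matrix is the weighted Hilbert matrix $\Bigl[\tfrac{m^{\beta-1/2}k^{\gamma-1/2}}{(m+k)^{\beta+\gamma}}\Bigr]$, whose $\ell^2$-boundedness (needing only $\beta>0$ and $\gamma>0$) is the content of the cited \cite[Prop.~2.3, Lemma~2.4]{ALM}. The crucial structural difference is that the denominator depends on $m+k$, not $m-k$: absorbing $(m+k)^{\beta+\gamma}$ into the $\mathcal{R}$-bounded operator family removes precisely the diagonal divergence $k^\gamma$ that kills your matrix. The rest (Gaussian/Rademacher equivalence under finite cotype, \cite[Prop.~2.6]{ALM}, and $\rho\to 1$ via Lemma~\ref{SQlimrho1}) is as in your outline. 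A smaller gap: after reducing to the range you pass from $\ran\bigl(\prod_j(I-\overline{\xi_j}T)\bigr)$ to all of $X$ via the mean-ergodic decomposition, but that decomposition gives the \emph{closure} of the range, whereas Lemma~\ref{SQlimrho1} only applies on the range itself; the paper bridges this with the ergodic means $\Lambda_m=\prod_j\Lambda_{m,j}$, which land exactly in $\ran\bigl(\prod_j(I-\overline{\xi_j}T)\bigr)$ and commute with $T$ (Lemma~\ref{squareFctCommute}).
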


\begin{proof}
We only show that $\norm{x}_{T,\alpha} \lesssim \norm{x}_{T,\beta}$, the roles of $\alpha$ and $\beta$ being symmetric.
First, we set $M = E(\alpha) + 1$, where $E(\alpha)$ is the integral
part of $\alpha$. Then we set $\gamma = M - \alpha$. By construction, $\gamma>0$.

Let $(c_k)_{k\geq 1}$ be the sequence of complex numbers such that
$$
\sum\limits_{k=1}^\infty c_k k^{\alpha-\frac 1 2} z^{k-1} = \frac 1 {\prod\limits_{j=1}^N (1-\overline{\xi_j}z)^M},
\qquad z\in\mathbb D.
$$
Arguing as in the proof of  Lemma \ref{ck}, we obtain that 
$c_k k^{\alpha - \frac 1 2} = O(k^{M-1})$. This implies that 
$$
c_k = O(k^{\gamma - \frac 1 2}).
$$
The operator $T$
is power bounded hence for any $\rho \in (0,1)$, we have
$$
I=\sum\limits_{k=1}^\infty c_k k^{\alpha-\frac 1 2} (\rho T)^{k-1} \prod\limits_{j=1}^N (I - \overline{\xi_j}\rho T)^M.
$$
Using $M=\alpha+\gamma$, we deduce
the following identities,
\begin{align*}
I 
= &\sum\limits_{k=0}^\infty c_{2k+1} (2k+1)^{\alpha-\frac 1 2} (\rho T)^{2k} \prod\limits_{j=1}^N (I - \overline{\xi_j}\rho T)^M \\
&+ \rho T \left[\sum\limits_{k=0}^\infty c_{2k+2} (2k+2)^{\alpha-\frac 1 2} (\rho T)^{2k} \prod\limits_{j=1}^N (I - \overline{\xi_j}\rho T)^M\right]\\
=& \sum\limits_{k=0}^\infty c_{2k+1} (\rho T)^{k} \prod\limits_{j=1}^N (I - \overline{\xi_j}\rho T)^\gamma (2k+1)^{\alpha-\frac 1 2}(\rho T)^k\prod\limits_{j=1}^N (I - \overline{\xi_j}\rho T)^{\alpha} \\
& + \rho T \left[\sum\limits_{k=0}^\infty c_{2k+2}(\rho T)^k \prod\limits_{j=1}^N (I - \overline{\xi_j}\rho T)^\gamma (2k+2)^{\alpha-\frac 1 2} (\rho T)^{k} \prod\limits_{j=1}^N (I - \overline{\xi_j}\rho T)^\alpha \right].
\end{align*}

For any integer $m \geq 1$ and any
$x \in X$, we deduce from above that
\begin{align*}
&m^{\beta-\frac 1 2}(\rho T)^{m-1}\prod\limits_{j=1}^N (I - \overline{\xi_j}\rho T)^\beta x \\
&= \sum\limits_{k=0}^\infty c_{2k+1}m^{\beta-\frac 1 2}(\rho T)^{m+k-1} \prod\limits_{j=1}^N(I-\overline{\xi_j}\rho T)^{\beta+\gamma}(2k+1)^{\alpha-\frac 1 2}(\rho T)^k \prod\limits_{j=1}^N(I-\overline{\xi_j}\rho T)^\alpha x\\
&+ \rho T\sum\limits_{k=0}^\infty c_{2k+2}m^{\beta-\frac 1 2}(\rho T)^{m+k-1} \prod\limits_{j=1}^N(I-\overline{\xi_j}\rho T)^{\beta+\gamma}(2k+2)^{\alpha-\frac 1 2}(\rho T)^k \prod\limits_{j=1}^N(I-\overline{\xi_j}\rho T)^\alpha x.
\end{align*}
We set 
\begin{equation*}
A_{m,k,1}(\rho) = c_{2k+1}m^{\beta - \frac 1 2} (\rho T)^{m+k-1} \prod\limits_{j=1}^N(I -\overline{\xi_j}\rho T)^{\beta +\gamma},
\end{equation*}
\begin{equation*}
 A_{m,k,2}(\rho) = c_{2k+2}m^{\beta - \frac 1 2} (\rho T)^{m+k-1}
 \prod\limits_{j=1}^N(I -\overline{\xi_j}\rho T)^{\beta +\gamma},
\end{equation*}
\begin{equation*}
y_m(\rho) = m^{\beta - \frac 1 2}(\rho T)^{m-1}\prod\limits_{j=1}^N(I-\overline{\xi_j}\rho T)^{\beta}x,
\end{equation*}
\begin{equation*}
z_{k,1}(\rho) = (2k+1)^{\alpha - \frac 1 2} (\rho T)^{k} 
\prod\limits_{j=1}^N(I-\overline{\xi_j}\rho T)^\alpha x
\end{equation*}
\begin{equation*}
z_{k,2}(\rho) = (2k+2)^{\alpha - \frac 1 2} (\rho T)^{k} \prod\limits_{j=1}^N(I-\overline{\xi_j}\rho T)^\alpha x.
\end{equation*}
Thus, we can write :
$$
y_m(\rho) = \sum\limits_{k=0}^\infty
A_{m,k,1}(\rho)z_{k,1}(\rho) + \rho T 
\sum\limits_{k=0}^\infty A_{m,k,2}(\rho)z_{k,2}(\rho).
$$
For any $n\geq 1$, we consider the partial sums : 
$$
y_{m,n}(\rho)= \sum\limits_{k=0}^n 
A_{m,k,1}(\rho)z_{k,1}(\rho) + \rho T 
\sum\limits_{k=0}^n A_{m,k,2}(\rho)z_{k,2}(\rho).
$$
We  write, for convenience,
$$
y_{m,n,1}(\rho)= \sum\limits_{k=0}^n A_{m,k,1}(\rho)z_{k,1}(\rho)
\qquad\hbox{and}\qquad 
y_{m,n,2}(\rho)= \sum\limits_{k=0}^n A_{m,k,2}(\rho)z_{k,2}(\rho).
$$
We have $y_{m,n}(\rho) \rightarrow y_{m}(\rho)$ when $n \rightarrow \infty$.
        
We would like to control the norms of the operators ${\rm Rad}(X)\to
{\rm Rad}(X)$ with operator valued kernels equal to either
$\bigl[A_{m,k,1}(\rho)\bigr]_{m\geq 1,k\geq 0}$
or $\bigl[A_{m,k,2}(\rho)\bigr]_{m\geq 1,k\geq 0}$.
We focus on the $A_{m,k,1}(\rho)$ only, the 
other case being similar. We write for any $m\geq 1 ,k\geq 0$ : 
$$
A_{m,k,1}(\rho) = \frac{m^{\beta - 
\frac 1 2}c_{2k+1}}{(m+k)^{\beta + \gamma}} 
\Bigl[(m+k)^{\beta+\gamma} (\rho T)^{m+k-1}
\prod\limits_{j=1}^N(I-\overline{\xi_j}\rho T)^{\beta+\gamma}
\Bigr]
$$
Since $c_k = O(k^{\gamma - \frac 1 2})$, we have a uniform estimate
$$
\frac{m^{\beta - \frac 1 2}\abs{c_{2k+1}}}{(m+k)^{\beta+\gamma}} 
\lesssim \frac{m^{\beta - \frac 1 2}k^{\gamma-\frac 1 2}}{(m+k)^{\beta + \gamma}}
$$
It therefore follows from \cite[Proposition 2.3 and Lemma 2.4]{ALM}   
that the infinite matrix 
$$
\left[\frac{m^{\beta-\frac 1 2}
\vert c_{2k+1}\vert}{(m+k)^{\beta+\gamma}}\right]_{k\geq 0,m\geq 1}
$$
represents an element of $B(l^2)$. 

By Proposition \ref{Rbddalpha}, the set
$$
F = \{(m+k)^{\beta+\gamma} (\rho T)^{m+k-1}\prod\limits_{j=1}^N(I-\overline{\xi_j}\rho T)^{\beta+\gamma} : m, k \geq 1, \rho \in (0,1]\}
$$
is $\mathcal{R}$-bounded. Since $X$ has finite cotype, the Rademacher 
and Gaussian averages on $X$ are equivalent. 
The set $F$ is therefore $\gamma$-bounded as well. 
For any finite families of Rademacher variables $(\varepsilon_k)_{k\geq 1}$ and standard Gaussian variables $(\gamma_k)_{k\geq 0}$ we have,
by \cite[Proposition 2.6]{ALM}, 
\begin{align*}
\Bignorm{\sum\limits_{m\geq 1} \varepsilon_m \otimes y_{m,n,1}(\rho)}_{\Rad{X}} 
&= \Bignorm{\sum\limits_{m\geq 1}\sum\limits_{k=0}^n
\varepsilon_m \otimes A_{m,k,1}(\rho)z_{k,1}(\rho)}_{\Rad{X}}\\
&\approx \Bignorm{\sum\limits_{m\geq 1}\sum\limits_{k=0}^n 
\gamma_m \otimes A_{m,k}(\rho)z_{k,1}(\rho)}_{\Gauss(X)}\\
&\lesssim \gamma(F) 
\Biggnorm{\left[\frac{m^{\beta-\frac 1 2}\vert 
c_{2k+1}\vert}{(m+k)^{\beta+\gamma}}
\right]}_{B(l^2)} 
\Bignorm{\sum\limits_{k=0}^n\gamma_k 
\otimes z_{k,1}(\rho)}_{\Gauss(X)}\\
&\lesssim \Bignorm{\sum\limits_{k=0}^n\gamma_k \otimes z_{k,1}(\rho)}_{\Gauss(X)}\\
&\lesssim \Bignorm{\sum\limits_{k=0}^n\varepsilon_k \otimes z_{k,1}(\rho)}_{\Rad{X}}.
\end{align*}
Passing to the limit when $n\to\infty$ and applying
the same argument to $y_{m,n,2}(\rho)$, we finally obtain 
an estimate
$$
\norm{x}_{\rho T,\beta} \lesssim \norm{x}_{\rho T,\alpha}
$$
up to a constant not depending on $\rho$.

It follows from above and to Lemma \ref{SQlimrho1} that we have 
a uniform estimate 
\begin{equation}\label{estimateAlphaBetaOnRange}
\norm{x}_{T,\beta} \lesssim \norm{x}_{T,\alpha},
\end{equation}
for $x \in \ran{\left(\prod\limits_{j=1}^N(I-\overline{\xi_j}T)\right)}$.

We set for any integer $m \geq 1$ and any integer $j \in \{1,..,N\}$
$$
\Lambda_{m,j} = \frac 1 {m+1} \sum\limits_{k=0}^m 
\left(I-(\overline{\xi_j}T)^k\right).
$$
If $x \in \ker(I-\overline{\xi_j}T)$, then 
$\Lambda_{m,j}(x) = 0$. Moreover, if  
$x \in \ran(I - \overline{\xi_j}T)$, let 
$y \in X$ such that $x = (I - \overline{\xi_j}T)y$. Then,
$$
\Lambda_{m,j}(x) = x - \cfrac 1 {m+1} [I - (\overline{\xi_j}T)^{m+1}]y.
$$
Since $T$ is power bounded, we deduce 
$$
\lim\limits_{m\rightarrow \infty} \Lambda_{m,j}(x) = x.
$$
Since $T$ is power bounded, the sequence $(\Lambda_{m,j})_m$ is bounded, hence
the above property holds as well
when $x \in \overline{\ran(I - \overline{\xi_j}T)}$.

It follows that if we denote $P_j$ the
projection on $\ker(I-\overline{\xi_j}T)$ of kernel 
$\overline{\ran(I - \overline{\xi_j}T)}$, then for any $x \in X$,
$$
\lim\limits_{m \rightarrow \infty}\Lambda_{m,j}(x) = (I-P_j)(x).
$$

We consider for any $m \geq 0$ the operator 
$$
\Lambda_m = \prod\limits_{j=1}^N \Lambda_{m,j}.
$$
Thus, for any $x \in X$
\begin{equation}\label{limitLambdam}
\lim\limits_{m \rightarrow\infty} \Lambda_m(x) = 
\prod\limits_{j=1}^N (I-P_j)(x).
\end{equation}

Let $x \in X$. It follows from the definition of the $\Lambda_{m,j}$ 
that for any $m\geq 1$, 
$\Lambda_m(x) \in \ran\left(
\prod\limits_{j=1}^N(I-\overline{\xi_j}T)\right)$.
Hence by (\ref{estimateAlphaBetaOnRange}),
$$
\bignorm{\Lambda_m(x)}_{T,\beta} 
\lesssim \bignorm{\Lambda_m(x)}_{T,\alpha}.
$$
The operators $\Lambda_m$ commute with $T$, 
hence by lemma \ref{squareFctCommute},
$$
\bignorm{\Lambda_m(x)}_{T,\beta} 
\leq \bignorm{\Lambda_m}\bignorm{x}_{T,\alpha}.
$$
We deduce that for all integer $n\geq 1$, we have
$$
\Bignorm{\sum\limits_{k=1}^n k^{\beta-\frac 1 2} 
\varepsilon_k \otimes T^{k-1}\prod\limits_{j=1}^N
(I-\overline{\xi_j}T)^{\beta}\Lambda_m(x)}_{\Rad{X}} 
\lesssim \norm{x}_{T,\alpha}.
$$
Passing to the limit when $m \rightarrow \infty$, we have by (\ref{limitLambdam}),
$$
 \Bignorm{\sum\limits_{k=1}^n k^{\beta-\frac 1 2} \varepsilon_k 
 \otimes T^{k-1}\prod\limits_{j=1}^N(I-\overline{\xi_j}T)^{\beta}
 \prod\limits_{j=1}^N(I-P_j)x}_{\Rad{X}} \lesssim \norm{x}_{T,\alpha}.
$$
Since $P_j(x)$ belongs to $\ker(I-\overline{\xi_j}T)$ 
and $\ker(I-\overline{\xi_j}T)^\beta = \ker(I-\overline{\xi_j}T)$, 
we actually obtain 
$$
\Bignorm{\sum\limits_{k=1}^n k^{\beta-\frac 1 2} \varepsilon_k 
\otimes T^{k-1}\prod\limits_{j=1}^N(I-\overline{\xi_j}T)^{\beta}x}_{\Rad{X}} 
\lesssim \norm{x}_{T,\alpha}.
$$
We deduce that $\norm{x}_{T,\beta} \lesssim \norm{x}_{T,\alpha}$. This proves 
the expected result.
\end{proof}

Combining theorems \ref{eqSF} and \ref{SQest}, we deduce a condition of the existence of bounded functional calculus with more general square functions estimates, but in the reflexive case only.
    
\begin{corollary}\label{implyR-Ritt}
Assume that $X$ is reflexive and that both 
$X$ and $X^*$ have finite cotype. 
Let $T : X \rightarrow X$ be a 
$\mathcal{R}$-Ritt$_E$
operator of $\mathcal{R}$-type 
$r\in (0,1)$ and assume that 
$T^*$ is $\mathcal{R}$-Ritt$_E$ as well. 
If $\alpha$ and $\beta$ are two positive real
numbers such that 
$\norm{x}_{T,\alpha} \lesssim \norm{x}$ 
and  $\norm{y}_{T^*,\beta} \lesssim \norm{y}$ 
for any $(x,y) \in X \times X^*$, 
then for any $s \in (r,1)$, $T$ admits a 
bounded $\HI(E_s)$ functional calculus.\end{corollary}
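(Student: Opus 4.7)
The plan is simply to reduce this corollary to the already established Theorem \ref{SQest}, which handles the special case $\alpha = \beta = 1$, by invoking the equivalence of square functions from Theorem \ref{eqSF} to change the exponents. All the substantive work has been done in those two theorems, so this final statement is essentially a combination argument.

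First, I would apply Theorem \ref{eqSF} to the operator $T$ acting on $X$. The hypotheses of that theorem require $X$ to be reflexive and of finite cotype, and $T$ to be $\mathcal{R}$-Ritt$_E$; all three are satisfied by assumption. Therefore we obtain the equivalence $\norm{x}_{T,\alpha} \approx \norm{x}_{T,1}$ for all $x \in X$. Combined with the hypothesis $\norm{x}_{T,\alpha} \lesssim \norm{x}$, this yields
$$
\norm{x}_{T,1} \lesssim \norm{x}, \qquad x\in X.
$$

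Next, I would apply Theorem \ref{eqSF} to $T^*$ acting on $X^*$. Since $X$ is reflexive, so is $X^*$; by hypothesis $X^*$ has finite cotype, and by hypothesis $T^*$ is $\mathcal{R}$-Ritt$_E$. Hence $\norm{y}_{T^*,\beta} \approx \norm{y}_{T^*,1}$ for all $y \in X^*$, and combined with the assumed estimate $\norm{y}_{T^*,\beta} \lesssim \norm{y}$ we obtain
$$
\norm{y}_{T^*,1} \lesssim \norm{y}, \qquad y \in X^*.
$$
With these two uniform estimates at exponent $1$ in hand, Theorem \ref{SQest} (applied with the operator $T$ of $\mathcal{R}$-type $r$) then yields the existence of a bounded $\HI(E_s)$ functional calculus for $T$, for every $s \in (r,1)$, which is the conclusion.

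There is no genuine obstacle here: once the equivalence of square functions in Theorem \ref{eqSF} has been proved, the corollary is a direct consequence. The only thing to verify carefully is that the hypotheses on $X^*$ required to apply Theorem \ref{eqSF} to $T^*$ are satisfied, which follows from reflexivity of $X$ together with the assumed finite cotype of $X^*$ and the assumption that $T^*$ is itself $\mathcal{R}$-Ritt$_E$.
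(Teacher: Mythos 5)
Your proof is correct and follows exactly the route indicated in the paper, which presents the corollary as the combination of Theorem \ref{eqSF} (applied to both $T$ on $X$ and $T^*$ on $X^*$ to reduce to exponent $1$) with Theorem \ref{SQest}. You also correctly identify the only point requiring care, namely that reflexivity of $X$, finite cotype of $X^*$, and the assumed $\mathcal{R}$-Ritt$_E$ property of $T^*$ are precisely what is needed to apply Theorem \ref{eqSF} on the dual side.
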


\section{Applications}
In this last section, we give consequences  
of the previous corollaries in particular 
cases, when $X$ has specific 
geometric properties (for instance when 
$X$ is UMD), when $X$ is a Hilbert 
space or a Banach lattice.

In these situations, $X$ has finite cotype, then some hypothesis of the previous statements can be lightened.

    We start with the case where $X$ has property $(\Delta)$. The only useful specificity is that $X$ has a finite cotype. 
    We need to add duality assumptions.

    \begin{corollary}\label{EquivD}
        Let $X$ be a reflexive Banach space and assume that both $X$ and $X^*$
        have property $(\Delta)$. Let $\alpha, \beta >0$ and let $T : X \rightarrow X$
        be a Ritt$_E$ operator on $X$. Then the following assertions are equivalent : 
        \begin{enumerate}
            \item $T$ admits a bounded $\HI(E_s)$ functional calculus for some $s \in (0,1)$
            \item $T$ and $T^*$ are 
            $\mathcal{R}$-Ritt$_E$ and we have estimates, for any 
            $(x,y) \in X \times X^*$, 
            $$
                \norm{x}_{T,\alpha} \lesssim \norm{x} \quad\hbox{and}\qquad \norm{y}_{T^*,\beta} \lesssim \norm{y}
            $$
        \end{enumerate}
    \end{corollary}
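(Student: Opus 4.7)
The plan is to handle the two implications by assembling results already established in the paper. The only delicate point is transferring the bounded $H^\infty$ functional calculus from $T$ to $T^*$; everything else is packaged into existing statements.

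For the direction (2) $\Rightarrow$ (1), no new work is needed: this is exactly Corollary \ref{implyR-Ritt}. Indeed, property $(\Delta)$ on both $X$ and $X^*$ implies finite cotype on both (as recalled in the remark following Proposition \ref{DeltaRittR}), $X$ is reflexive by assumption, and the $\mathcal{R}$-Ritt$_E$ character of $T$ and $T^*$ together with the two square function estimates are precisely what assertion (2) supplies. So one simply quotes Corollary \ref{implyR-Ritt}.

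For (1) $\Rightarrow$ (2), I would proceed in three steps. First, Proposition \ref{DeltaRittR} applied to $T$ on $X$ (using property $(\Delta)$ of $X$) shows that $T$ is $\mathcal{R}$-Ritt$_E$. Second, to get the analogous conclusion for $T^*$, I would transfer the $H^\infty(E_s)$ calculus to the dual side via the adjoint identity $\phi(T)^* = \phi(T^*)$ for $\phi \in H_0^\infty(E_s)$, which yields $\|\phi(T^*)\|_{B(X^*)} = \|\phi(T)\|_{B(X)} \leq K \|\phi\|_{\infty,E_s}$; then Proposition \ref{DeltaRittR} applied to $T^*$ on $X^*$ (using property $(\Delta)$ of $X^*$) gives that $T^*$ is $\mathcal{R}$-Ritt$_E$. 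Third, since $X$ and $X^*$ both have finite cotype, Corollary \ref{SFEalpha} applied successively to $T$ on $X$ and to $T^*$ on $X^*$ produces the two estimates $\|x\|_{T,\alpha} \lesssim \|x\|$ and $\|y\|_{T^*,\beta} \lesssim \|y\|$ for arbitrary positive $\alpha$ and $\beta$.

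The main (minor) point to verify is the adjoint identity $\phi(T)^* = \phi(T^*)$ for $\phi \in H_0^\infty(E_s)$. For polynomial $\phi$ this is immediate term by term. In general, it follows from the Dunford-Riesz representation of $\phi(T)$ from Definition \ref{dunford}, the fact that the Banach adjoint commutes with the vector-valued contour integral, and the identity $R(z,T)^* = R(z,T^*)$ valid on the common resolvent set. This is standard and deserves only a brief mention; once it is in hand, the corollary follows by combining Proposition \ref{DeltaRittR}, Corollary \ref{SFEalpha}, and Corollary \ref{implyR-Ritt} with no further effort.
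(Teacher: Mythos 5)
Your proposal is correct and follows the assembly the paper clearly intends (the paper supplies no explicit proof, only a preamble emphasizing that property $(\Delta)$ delivers finite cotype and that duality hypotheses are added to handle $T^*$). The one small point you are right to flag — that $\phi(T)^* = \phi(T^*)$, hence $T^*$ inherits both the Ritt$_E$ property and the bounded $H^\infty(E_s)$ calculus, so that Proposition \ref{DeltaRittR} and Corollary \ref{SFEalpha} apply on $X^*$ — is exactly the piece of bookkeeping that makes the chain Proposition \ref{DeltaRittR} $\to$ Corollary \ref{SFEalpha} $\to$ Corollary \ref{implyR-Ritt} close, and it is standard as you say.
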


    In the $K$-convex case, a stronger statement can be formulated, using properties recalled in chapter 1. Indeed, in this situation, 
    both $X$ and $X^*$ have finite cotype, and the property of $T$ to be $\mathcal{R}$-Ritt$_E$ hold for $T^*$ as well. We derive a version of corollary \ref{EquivD} requiring lighter hypothesis.

    \begin{corollary}
        Let $X$ be a reflexive and $K$-convex Banach space. Let $\alpha, \beta >0$ and let $T : X \rightarrow X$ be a Ritt$_E$ operator on $X$. Then the following assertions are equivalent : 
        \begin{enumerate}
            \item $T$ admits a bounded $\HI(E_s)$ functional calculus for some $s \in (0,1)$
            \item $T$ is $\mathcal{R}$-Ritt$_E$ and we have estimates, for any 
            $(x,y) \in X \times X^*$, 
            $$
                \norm{x}_{T,\alpha} \lesssim \norm{x} \quad\hbox{and}\qquad \norm{y}_{T^*,\beta} \lesssim \norm{y}
            $$
        \end{enumerate}
    \end{corollary}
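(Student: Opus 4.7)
The plan is to prove the two implications by combining Corollary \ref{implyR-Ritt}, Corollary \ref{SFEalpha}, and the following structural facts afforded by $K$-convexity: (i) $K$-convexity is self-dual (Pisier's theorem) for reflexive $X$, hence both $X$ and $X^*$ have finite cotype; (ii) by Theorem \ref{KcvRbdd} applied to the family of resolvents defining the $\mathcal{R}$-Ritt$_E$ property, the adjoint $T^*$ is $\mathcal{R}$-Ritt$_E$ as soon as $T$ is. These two observations permit one to dispense with the separate assumption on $T^*$ present in the previous corollaries.

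For the implication (2) $\Rightarrow$ (1), the observations above show that all the hypotheses of Corollary \ref{implyR-Ritt} are met: $X$ is reflexive, both $X$ and $X^*$ have finite cotype, $T$ is $\mathcal{R}$-Ritt$_E$ by assumption and $T^*$ inherits this property through (ii), and the square function estimates with parameters $\alpha$ and $\beta$ are the ones assumed in (2). Corollary \ref{implyR-Ritt} then yields a bounded $\HI(E_s)$ functional calculus for any $s$ larger than the $\mathcal{R}$-type of $T$, which establishes (1).

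For (1) $\Rightarrow$ (2), since $X$ has finite cotype, Corollary \ref{SFEalpha} applied directly to $T$ gives $\norm{x}_{T,\alpha}\lesssim\norm{x}$. A bounded $\HI(E_s)$ calculus transfers to $T^*$ by duality (using $\phi(T)^*=\tilde\phi(T^*)$ with $\tilde\phi$ in the corresponding $H^\infty$ class), and since $X^*$ is also $K$-convex hence of finite cotype, a second application of Corollary \ref{SFEalpha} to $T^*$ produces $\norm{y}_{T^*,\beta}\lesssim\norm{y}$. The remaining and main obstacle is to derive the $\mathcal{R}$-Ritt$_E$ property from the sole assumption of a bounded $\HI(E_s)$ calculus. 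I plan to follow the route of Proposition \ref{DeltaRittR}: the sectorial operators $A_j=I-\overline{\xi_j}T$ each inherit a bounded $\HI(\Sigma_\omega)$ calculus for some $\omega\in(0,\pi/2)$, so that the Kalton--Weis type result of \cite[Proposition 7.4]{LM0} (or a suitable $K$-convex analogue of it) makes each $A_j$ $\mathcal{R}$-sectorial; then, by the correspondence between $\mathcal{R}$-sectorial and $\mathcal{R}$-Ritt$_E$ operators recorded in \cite[Lemma 2.7]{BLM}, $T$ itself is $\mathcal{R}$-Ritt$_E$. The delicate point is precisely this last step, since the cited proposition is stated for property $(\Delta)$; the verification that it carries through under the weaker $K$-convexity hypothesis is the hinge of the argument.
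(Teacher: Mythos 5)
Your treatment of $(2)\Rightarrow(1)$ is correct and follows precisely the route the paper intends: $K$-convexity of $X$ passes to $X^*$, so both spaces have finite cotype (being $K$-convex they have nontrivial type, hence finite cotype), and Theorem \ref{KcvRbdd} together with the last theorem of Section 3 shows $T^*$ is $\mathcal{R}$-Ritt$_E$ whenever $T$ is; Corollary \ref{implyR-Ritt} then applies without further ado.

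For $(1)\Rightarrow(2)$ you have correctly isolated the weak point, and it is a genuine gap, not merely a missing routine verification. The two square-function estimates in (2) do follow as you describe from Corollary \ref{SFEalpha}, applied once to $T$ and once, via duality of the $\HI$-calculus (using $\phi(T)^*=\phi(T^*)$ and $K$-convexity of $X^*$), to $T^*$. But the remaining claim that $T$ is $\mathcal{R}$-Ritt$_E$ cannot be deduced from a bounded $\HI(E_s)$ calculus under mere $K$-convexity by the tools the paper supplies. Proposition \ref{DeltaRittR} is the only result in the paper passing from bounded calculus to $\mathcal{R}$-Ritt$_E$, and it rests on \cite[Proposition 7.4]{LM0}, which is a Kalton--Weis type theorem requiring property $(\Delta)$. $K$-convexity does not in general imply property $(\Delta)$, and the "properties recalled in chapter 1" to which the paper appeals concern only $\mathcal{R}$-boundedness of adjoint families, not this passage. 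Hence your proposal, as you yourself indicate, is incomplete at this step; and this incompleteness reflects an unjustified step in the paper's corollary as stated. A sound repair is to add the hypothesis that $X$ has property $(\Delta)$ (which already gives finite cotype), retaining $K$-convexity only to discharge the separate hypotheses on $X^*$ and on $T^*$ appearing in Corollary \ref{EquivD}; this is also what is actually available for the UMD corollary that follows, since UMD implies both $K$-convexity and property $(\Delta)$.
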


    If $X$ is a UMD space, it is reflexive and $K$-convex. Thus, we can formulate the following statement.

    \begin{corollary}
        Let $X$ be a UMD space. Let $\alpha, \beta >0$ and $T : X \rightarrow X$ a Ritt$_E$ operator. Then the following assertions are equivalent : 
        \begin{enumerate}
            \item $T$ admits a bounded $\HI(E_s)$ functional calculus for some $s \in (0,1)$
            \item $T$ is $\mathcal{R}$-Ritt$_E$ and we have estimates, for any 
            $(x,y) \in X \times X^*$, 
            $$
                \norm{x}_{T,\alpha} \lesssim \norm{x} \quad\hbox{and}\qquad \norm{y}_{T^*,\beta} \lesssim \norm{y}
            $$
        \end{enumerate}
    \end{corollary}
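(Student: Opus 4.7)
The plan is to obtain this corollary as an immediate specialization of the preceding one, which handles reflexive $K$-convex Banach spaces. The entire argument reduces to verifying that a UMD space automatically satisfies both geometric hypotheses, so no new analytic input is needed.

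First I would recall two classical facts from the geometry of UMD spaces. Every UMD space is super-reflexive, hence in particular reflexive, and every UMD space has nontrivial type, so by Pisier's $K$-convexity theorem it is $K$-convex. Both statements are standard and can be quoted from \cite{HVVW}. Once these are in hand, the hypotheses of the preceding corollary are satisfied by $X$.

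Then I would apply the preceding corollary to $T$ directly: assertions (1) and (2) are phrased verbatim in the same way as in the UMD version, so the equivalence transfers with no modification. Concretely, for $(1)\Rightarrow(2)$ one uses Corollary \ref{SFEalpha} applied to $T$ to obtain the estimate $\norm{x}_{T,\alpha}\lesssim\norm{x}$, and applied to $T^*$ acting on the reflexive UMD space $X^*$ (which again has finite cotype and for which $T^*$ is $\mathcal{R}$-Ritt$_E$ by Theorem \ref{KcvRbdd}) to obtain $\norm{y}_{T^*,\beta}\lesssim\norm{y}$. For the converse $(2)\Rightarrow(1)$, one invokes Corollary \ref{implyR-Ritt}, using once more Theorem \ref{KcvRbdd} to transfer the $\mathcal{R}$-Ritt$_E$ property from $T$ to $T^*$, which accounts for the fact that the statement only assumes $T$ itself to be $\mathcal{R}$-Ritt$_E$.

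There is essentially no obstacle in the argument; the proof is a single invocation of the preceding corollary combined with the two geometric facts about UMD spaces. The conceptual point worth emphasizing is that UMD is precisely the hypothesis that makes all of the duality arguments from Sections 2 and 3 available simultaneously, which is why the adjoint conditions appearing in Corollary \ref{implyR-Ritt} can be suppressed from the statement.
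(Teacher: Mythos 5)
Your proof is correct and follows the same route as the paper: since a UMD space is reflexive and, having nontrivial type, $K$-convex by Pisier's theorem, the statement is an immediate specialization of the preceding corollary for reflexive $K$-convex spaces. The paper indeed introduces this corollary with exactly that remark, so your extra unpacking of the two directions via Corollary \ref{SFEalpha}, Corollary \ref{implyR-Ritt} and Theorem \ref{KcvRbdd} only makes explicit what is already implicit there.
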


    When $X$ is a $K$-convex Banach lattice, several assumptions are automatically satisfied and we obtain the following.

    \begin{proposition}
        Let $X$ be a $K$-convex Banach lattice, 
        let $T : X \rightarrow X$ be a Ritt$_E$ operator and let $\alpha, \beta > 0$. The following assertions are equivalent.
        \begin{enumerate}
            \item $T$ admits a bounded $\HI$ functional calculus
            \item $T$ is $\mathcal{R}$-Ritt$_E$ and there exists $C >0$ such that for all $x \in X$ and $y \in X^*$,
                $$
                    \Bignorm{\left(\sum\limits_{k = 0}^\infty k^{2\alpha-1}\abs{T^{k-1}\prod\limits_{j=1}^N(I - \overline{\xi_j}T)^\alpha x}^2\right)^{1/2}} \leq C \norm{x},
                $$
                and
                $$
                    \Bignorm{\left(\sum\limits_{k = 0}^\infty k^{2\beta-1}\abs{(T^*)^{k-1}\prod\limits_{j=1}^N(I - \overline{\xi_j}T^*)^\beta y}^2\right)^{1/2}} \leq C \norm{y}
                $$
        \end{enumerate}
    \end{proposition}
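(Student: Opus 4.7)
The plan is to reduce this proposition to the preceding corollary in the $K$-convex Banach space setting by identifying the Rademacher square function $\Vert x\Vert_{T,\alpha}$ with its lattice counterpart. The crucial bridge is the equivalence stated in Remark 4 following Definition \ref{SF}: on a Banach lattice of finite cotype $Y$ and for any Ritt$_E$ operator $S$ on $Y$,
$$
\Vert x\Vert_{S,\alpha} \;\approx\; \Bignorm{\Bigl(\sum_{k=1}^\infty k^{2\alpha-1}\abs{S^{k-1}\prod_{j=1}^N(I-\overline{\xi_j}S)^\alpha x}^2\Bigr)^{1/2}}_Y, \qquad x \in Y.
$$
My first step is to verify that both $X$ and $X^*$ are covered by this equivalence. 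Since $X$ is $K$-convex, it has nontrivial Rademacher type and is reflexive, hence in particular has finite cotype; moreover the dual of a $K$-convex space is $K$-convex, and the dual of a Banach lattice is a (Dedekind complete) Banach lattice. Consequently $X^*$ is again a $K$-convex Banach lattice of finite cotype, and the lattice equivalence applies on $X^*$ with the parameter $\beta$ and the operator $T^*$.

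The implication $(1)\Rightarrow(2)$ then proceeds as follows. If $T$ admits a bounded $H^\infty(E_s)$ functional calculus for some $s\in(0,1)$, the previous corollary for $K$-convex Banach spaces yields that $T$ is $\mathcal{R}$-Ritt$_E$ and that
$$
\Vert x\Vert_{T,\alpha} \lesssim \Vert x\Vert \quad\hbox{and}\quad \Vert y\Vert_{T^*,\beta} \lesssim \Vert y\Vert, \qquad (x,y)\in X\times X^*.
$$
Rewriting these Rademacher estimates via the lattice equivalence above immediately produces the lattice square function inequalities of (2).

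The implication $(2)\Rightarrow(1)$ is the mirror image. Given the two lattice estimates, the same equivalence turns them into $\Vert x\Vert_{T,\alpha}\lesssim \Vert x\Vert$ and $\Vert y\Vert_{T^*,\beta}\lesssim \Vert y\Vert$ for $(x,y)\in X\times X^*$. Combined with the assumed $\mathcal{R}$-Ritt$_E$ property, the previous corollary (in its $K$-convex Banach space form) produces a bounded $H^\infty(E_s)$ functional calculus for $T$ for every $s$ larger than the $\mathcal{R}$-type of $T$.

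The only nontrivial obstacle is the double verification that both $X$ and $X^*$ satisfy the finite cotype hypothesis required by Remark 4; once this is settled, the proof is essentially a dictionary translation between Rademacher averages and lattice square functions, and everything else is supplied by the $K$-convex Banach space corollary. I would therefore open the proof by recording the structural facts (reflexivity, finite cotype of $X$ and $X^*$, and the Banach lattice structure on $X^*$), then carry out the two translations outlined above.
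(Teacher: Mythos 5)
Your strategy — reduce to the preceding corollary for reflexive $K$-convex Banach spaces, then translate Rademacher square functions into lattice square functions via the Remark (4) after Definition \ref{SF} — is exactly the route the paper intends, and both implications are carried out correctly. The one imprecision is the claim that ``since $X$ is $K$-convex, it has nontrivial Rademacher type and is reflexive'': $K$-convexity does \emph{not} imply reflexivity for a general Banach space, which is precisely why the preceding corollary keeps reflexivity as a separate hypothesis. What saves you here is the lattice structure: a Banach lattice with nontrivial type also has finite cotype, so it contains no copy of $c_0$ or $\ell^1$, and a Banach lattice with that property is reflexive. You should route the reflexivity of $X$ through this lattice argument rather than through $K$-convexity alone; the rest of your proof (duality of $K$-convexity, the lattice structure of $X^*$, the finite-cotype check for both $X$ and $X^*$, and the two translations) is sound and matches the paper's implicit proof.
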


    We give now a last statement in the case where $X = H$ is a Hilbert space.

    \begin{proposition}
        Let $H$ be a Hilbert space, let $T : H \rightarrow H$ be  a Ritt$_E$ operator and 
        let $\alpha, \beta > 0$. The following assertions are equivalent.
        \begin{enumerate}
            \item $T$ admits a bounded $\HI$ functional calculus
            \item There exists $C >0$ such that for all $x,y \in H$,
                $$
                    \left(\sum\limits_{k = 0}^\infty k^{2\alpha-1}\norm{T^{k-1}\prod\limits_{j=1}^N(I - \overline{\xi_j}T)^\alpha x}^2\right)^{1/2} \leq C \norm{x},
                $$
                and
                $$
                    \left(\sum\limits_{k = 0}^\infty k^{2\beta-1}\norm{(T^*)^{k-1}\prod\limits_{j=1}^N(I - \overline{\xi_j}T^*)^\beta y}^2\right)^{1/2} \leq C \norm{y}
                $$
        \end{enumerate}
    \end{proposition}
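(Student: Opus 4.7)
The plan is to reduce the statement to the corollaries already established in the previous sections, exploiting three special features of a Hilbert space $H$: every bounded set of operators is automatically $\mathcal{R}$-bounded, so Ritt$_E$ coincides with $\mathcal{R}$-Ritt$_E$; the space $H$ has property $(\Delta)$ (in fact it is UMD and $K$-convex); and the Rademacher averages reduce to genuine $\ell^2$ norms, so the square function $\norm{x}_{T,\alpha}$ defined in Definition \ref{SF} is equal to the Hilbertian expression $\left(\sum_{k\geq 1} k^{2\alpha-1}\norm{T^{k-1}\prod_j(I-\overline{\xi_j}T)^\alpha x}^2\right)^{1/2}$ given in Remark (3) after that definition. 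In particular, condition (2) of the proposition is exactly the pair of estimates $\norm{x}_{T,\alpha}\lesssim \norm{x}$ and $\norm{y}_{T^*,\beta}\lesssim \norm{y}$ used in the previous section.

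For the direction (1) $\Rightarrow$ (2), I would first observe that if $T$ admits a bounded $H^\infty(E_s)$ functional calculus, then so does $T^*$: indeed, on a Hilbert space one has $\phi(T)^*=\overline{\phi}(T^*)$ for any $\phi\in H_0^\infty(E_s)$, and $\|\overline{\phi}\|_{\infty,E_s}=\|\phi\|_{\infty,E_s}$. Since $H$ has finite cotype, Corollary \ref{SFEalpha} applied to $T$ yields $\norm{x}_{T,\alpha}\lesssim\norm{x}$, and the same corollary applied to $T^*$ (which is $\mathcal{R}$-Ritt$_E$ by Proposition \ref{DeltaRittR} combined with the stability under adjoint on a $K$-convex space) yields $\norm{y}_{T^*,\beta}\lesssim\norm{y}$. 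Translating these two estimates through the Hilbertian identity mentioned above gives (2).

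For the direction (2) $\Rightarrow$ (1), I would apply Theorem \ref{eqSF} twice. Since $T$ is automatically $\mathcal{R}$-Ritt$_E$ (boundedness equals $\mathcal{R}$-boundedness on $H$) and $H$ is reflexive with finite cotype, the theorem provides the equivalence $\norm{x}_{T,\alpha}\approx \norm{x}_{T,1}$, so the estimate $\norm{x}_{T,\alpha}\lesssim \norm{x}$ implies $\norm{x}_{T,1}\lesssim \norm{x}$. Applying the same argument to $T^*$ (which is Ritt$_E$, hence $\mathcal{R}$-Ritt$_E$, on $H$) yields $\norm{y}_{T^*,1}\lesssim \norm{y}$. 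At this point both hypotheses of Theorem \ref{SQest} are satisfied, so $T$ admits a bounded $H^\infty(E_s)$ functional calculus for every $s$ strictly larger than the $\mathcal{R}$-type of $T$, which proves (1).

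There is no real obstacle here beyond the bookkeeping of identifying the Hilbertian square function with $\norm{\,\cdotp}_{T,\alpha}$ and verifying that the functional calculus transfers to $T^*$; both points are routine. The entire argument is essentially a specialization of Corollary \ref{implyR-Ritt} and Corollary \ref{EquivD} to the Hilbert space setting, where every side hypothesis (cotype, reflexivity, property $(\Delta)$, $\mathcal{R}$-Ritt$_E$ character of both $T$ and $T^*$) comes for free.
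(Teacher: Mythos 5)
Your plan is correct and follows exactly the route the paper intends: on a Hilbert space $H$, boundedness and $\mathcal{R}$-boundedness of operator families coincide (so Ritt$_E=\mathcal{R}$-Ritt$_E$ and $T^*$ is automatically $\mathcal{R}$-Ritt$_E$), $H$ is reflexive, $K$-convex and of finite cotype, and the Rademacher averages collapse to $\ell^2(H)$ norms, turning the abstract square function into the Hilbertian expression of the Proposition; from there (1)$\Rightarrow$(2) is Corollary~\ref{SFEalpha} applied to $T$ and $T^*$, and (2)$\Rightarrow$(1) is Theorem~\ref{eqSF} followed by Theorem~\ref{SQest}, i.e.\ Corollary~\ref{implyR-Ritt}. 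One small imprecision: what you need is the Banach-space adjoint identity $\phi(T)^{*}=\phi(T^{*})$ (which follows directly from the Dunford integral and preserves the $H^{\infty}(E_{s})$ norm), rather than $\phi(T)^{*}=\overline{\phi}(T^{*})$; the conjugated version corresponds to the Hilbertian (antilinear) adjoint and would exchange $E$ for $\overline{E}$, which is not what the statement involving $I-\overline{\xi_{j}}T^{*}$ requires. Once this convention is fixed, everything you wrote goes through; the detour through Proposition~\ref{DeltaRittR} for the $\mathcal{R}$-Ritt$_E$ character of $T^{*}$ is also unnecessary (on $H$ it is immediate), but harmless.
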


\newpage

\bibliographystyle{abbrv}	

\vskip 0.2cm
\end{document}